   \newtheorem{thm}{Theorem}[subsection]
      \newtheorem*{thm*}{Theorem}
\newtheorem*{thmA}{Theorem A}
\newtheorem*{thmB}{Theorem B}
\newtheorem*{thmC}{Theorem C}
   \newtheorem{proposition} [thm]{Proposition} 
    \newtheorem{fact}[thm]{Fact}
      \newtheorem{defilemma}[thm]{Definition/Lemma}
   \newtheorem{prop}[thm] {Proposition}     
   \newtheorem{lemma} [thm]{Lemma}
   \newtheorem*{conjecture*}{Conjecture}
\theoremstyle{definition}
          \newtheorem*{exercise*}{Exercise}
     \newtheorem{example}[thm]{Example}
 \newtheorem{definition}[thm]{Definition} 
  \newtheorem{defi}[thm] {Definition}
  \newtheorem{remark} [thm]{Remark}
\newcommand{\N}{{\mathbb{N}}}
\newcommand{\PP}{{\mathbb{P}}}
\newcommand{\RR}{{\mathbb{R}}}
\newcommand{\ZZ}{{\mathbb{Z}}}
\newcommand{\cC}{{\mathcal C}}
\newcommand{\cE}{{\mathcal E}}
\renewcommand{\cL}{{\mathcal L}}
\newcommand{\cM}{{\mathcal M}}
\newcommand{\cO}{{\mathcal O}}
\newcommand{\cS}{{\mathcal S}}
\newcommand{\cU}{{\mathcal U}}
\newcommand{\cV}{{\mathcal V}}
\newcommand{\cX}{{\mathcal X}}
 \newcommand{\md}{{\underline{d}}}
  \newcommand{\mk}{{\underline{k}}}
 \newcommand{\mdeg}{{\underline{\deg}}}
\def\<{\langle}
\def\>{\rangle}
\newcommand{\oP}{\overline{P}}
\newcommand{\red}{{\operatorname{red}}}
\newcommand{\Spec}{\operatorname{Spec}}
\newcommand{\Pic}{{\operatorname{{Pic}}}}
\newcommand{\Hom}{{\operatorname{Hom}}}
\newcommand{\Div}{{\operatorname{Div}}}
\newcommand{\Ker}{{\operatorname{Ker}}}
\newcommand{\Aut}{{\operatorname{Aut}}}
\newcommand{\Trop}{{\operatorname{Trop}}}
\newcommand{\trop}{{\operatorname{trop}}}
\newcommand{\an}{{\operatorname{an}}}
\newcommand{\val}{{\operatorname{val}}}
   \newcommand{\MON}{\operatorname{D}}
    \newcommand{\EFF}{\operatorname{E}}
\newcommand{\double}{\genfrac..{0pt}1
{\raise -2pt\hbox{$\scriptstyle\longrightarrow$}}{\raise 4pt\hbox
{$\scriptstyle\longrightarrow$}}} 
\newcommand{\ssm}{\smallsetminus}
 \newcommand{\la}{\longrightarrow}
\newcommand{\ha}{\hookrightarrow}
\newcommand{\ov}{\overline}
\newcommand{\ma}{\mathcal}
 \newcommand{\Mgbst}{{\ov{\mathcal{M}}_g}}
  \newcommand{\Sgbst}{{\ov{\mathcal{S}}_g}}
    \newcommand{\Sgbsto}{\ov{\mathcal{S}}_g^-}
      \newcommand{\Sgbste}{\ov{\mathcal{S}}_g^+}
 \newcommand{\Sgnbst}{\ov{\mathcal{S}}_{g,n}}
 \newcommand{\Sgnbsto}{\ov{\mathcal{S}}_{g,n}^-}
 \newcommand{\Sgnbste}{\ov{\mathcal{S}}_{g,n}^+}
 \newcommand{\Mgnb}{\ov{M}_{g,n}}
  \newcommand{\Sgnb}{\ov{S}_{g,n}}
  \newcommand{\Mgnbst}{{\ov{\cM}}_{g,n}}
  \newcommand{\Mgnst}{{\mathcal{M}_{g,n}}}
    \newcommand{\Mgnban}{\Mgnb^{\operatorname{an}}}
    \newcommand{\Sgnban}{\Sgnb^{\operatorname{an}}}
\newcommand{\Mt}{M^{\rm{{trop}}}}
\newcommand{\Mgnt}{{M_{g,n}^{\rm trop}}}
\newcommand{\Mgtnb}{\ov{M}_{g,n}^{\trop}}
\newcommand{\Sgnt}{S_{g,n}^{\trop}}
\newcommand{\Sgntb}{\ov{S}_{g,n}^{\trop}}
  \newcommand{\Sgntbo}{(\Sgntb)^-}
   \newcommand{\Sgntbe}{(\Sgntb)^+}
\newcommand{\Mgntb}{\ov{M}_{g,n}^{\rm trop}}
 \newcommand{\hX}{\widehat{X}}
  \newcommand{\hcX}{\widehat{\mathcal{X}}}
\newcommand{\hY}{\widehat{Y}}
\newcommand{\hf}{\widehat{f}}
\newcommand{\hL}{\widehat{L}}
\newcommand{\hG}{\widehat{G}}
\newcommand{\hH}{\widehat{H}}
\newcommand{\hv}{\widehat{v}}
 \newcommand{\ra}{\rightarrow}
\newcommand{\mb}{\mathbb}
\newcommand{\col}{\colon}
\newcommand{\ol}{\overline}
\newcommand{\wh}{\widehat}
\newcommand{\wt}{\widetilde}
 \newcommand{\Sgn}{{\mathcal{G}}_{g,n}} 
 \newcommand{\Spgn}{\mathcal{SP}_{g,n}}
 \newcommand{\Cgn}{\mathcal{C}_{g,n}}
      \newcommand{\tcS}{{\wt{\ma S}}}
  \newcommand{\hcS}{{\widehat{\ma S}}}
\begin{document}

\bibliographystyle{plain}

\title{Tropicalizing the moduli space of   spin curves}

\author[]{Lucia Caporaso, Margarida Melo, and Marco Pacini}
\address[Caporaso]{Dipartimento di Matematica e Fisica\\ Universit\`{a} Roma Tre \\ Largo San Leonardo Murialdo \\I-00146 Roma\\  Italy }\email{caporaso@mat.uniroma3.it}
 \address[Melo]{Dipartimento di Matematica e Fisica\\ Universit\`{a} Roma Tre \\ Largo San Leonardo Murialdo \\I-00146 Roma\\  Italy\\ and CMUC and University of Coimbra\\Apartado 3008\\
EC Santa Cruz\\3001--501 Coimbra \\ Portugal}\email{melo@mat.uniroma3.it}
\address[Pacini]{Instituto de Matem\'atica, Universidade Federal Fluminense  \\ Campus do Gragoat\'a \\ 24.210-201 Niter\'oi, Rio de Janeiro, Brazil}\email{pacini.uff@gmail.com, pacini@impa.br}
 
\begin{abstract} 
We study  the tropicalization of    the moduli space of algebraic  spin curves, $\Sgnbst$.
We exhibit its combinatorial stratification and prove that the strata are irreducible.
We construct the moduli space of  tropical spin curves $\Sgntb$, prove that is naturally isomorphic to the skeleton of the  analytification,  $\Sgnban$, of   $\Sgnbst$,  and 
give a geometric interpretation of the retraction of $\Sgnban$ onto its skeleton 
in terms of a tropicalization map $\Sgnban\to \Sgntb$.

 \end{abstract}


\maketitle

 \noindent MSC (2010): 14H10, 14H40, 14T05.

 \noindent Keywords: Moduli space, stable spin curve,  theta-characteristic, tropical curve,   skeleton, tropicalization map.
 
\tableofcontents

 \section{Introduction and preliminaries}

 \subsection{Introduction}

In recent years the geometry of toroidal compactifications of moduli spaces has been treated more and more from the point of view of tropical geometry. If $\cU\hookrightarrow \ma Y$ is a toroidal embedding of Deligne-Mumford stacks, the boundary, $\ma Y\smallsetminus \ma U$, is endowed with a  stratification whose structure is  encoded in the skeleton, $\Sigma(\ma Y)$, of the Berkovich analytification of $\ma Y$. This skeleton is  a topological retraction of the analytification and   has the structure of a generalized cone complex. On the other hand,  if $\ma Y$ is a moduli space,   $\Sigma(\ma Y)$ has been described, in some remarkable cases, as the moduli space of objects which can be viewed as the tropical counterpart of the objects parametrized by $\ma Y$.
See \cite{ACP15}, \cite{Uli15} \cite{CMR16}, \cite{AP18}, for example.

This paper studies  the moduli space of theta characteristics on smooth curves of genus $g$, written $\cS_g$, and its compactification via stable spin curves.
 The moduli space of  stable spin curves, $\Sgbst$, was   constructed by Cornalba in \cite{cornalba}. It is endowed with a finite (ramified) map  of degree $2^{2g}$, written $\pi:\Sgbst\to \Mgbst$, onto the moduli space of stable curves, $\Mgbst$. The fiber of $\pi$  over a curve $X$   parametrizes  theta characteristics on partial normalizations of $X$. The space  $\Sgbst$ has two connected components,  $\Sgbste$  and $\Sgbsto$, parametrizing  theta characteristics whose  space of global sections has even or odd dimension, respectively.
There has been in the years a constant interest in  the geometry of $\Sgbst$ and its applications; see \cite{J98}, \cite{J00}, \cite{JKV01}, \cite{BF06}, \cite{F10}, \cite{FV14}, for   example. 
Here we pursue the following route:

\begin{enumerate}[(1)]
\item We lay the combinatorial groundwork by defining spin graphs and (extended) spin tropical curves. 
We construct  the moduli space of  tropical spin curves, $\ol{S}_{g}^{\trop}$,
and  the natural map $\pi^{\trop}\col \ol{S}_{g}^{\trop}\ra \ol{M}_{g}^{\trop}$ to the moduli space of tropical curves.
\item 
We describe the stratification of the toroidal embedding $\ma S_g\hookrightarrow \Sgbst$ using spin graphs and  exhibit a recursive presentation of the stata. We prove that the strata are irreducible.
\item We give a modular interpretation of the skeleton of the analytification of $\Sgbst$, by showing that it is isomorphic to $\ol{S}_{g}^{\trop}$ as an extended generalized cone complex. 
\end{enumerate}

Theta characteristics on a   tropical curve, $\Gamma$, of genus $g$  have been  studied in \cite{zharkov}, in case $\Gamma$ is ``pure"  (i.e. such that $g=b_1(\Gamma)$, see \ref{subtrop}).
They  are defined as divisors $D$ such that $2D$ is equivalent to  the canonical divisor of $\Gamma$, and are in bijection with the elements of  $H_1(\Gamma, \mathbb Z/2\mathbb Z)$, so there are  $2^{g}$ of them, of which  exactly one is non-effective.


Theta characteristics on smooth algebraic curves tropicalize to theta characteristics on the skeleton (a tropical curve) and
the tropicalization map has an interesting behaviour.  Indeed, each  effective theta characteristic in the skeleton is the image of $2^g$  theta characteristics, exactly half of which are odd;  the unique non-effective theta characteristic is the image of $2^g$ even theta characteristics;  this  has been  proved  in \cite{lenjensen} generalizing results of  \cite{panizzut} in the case of hyperelliptic curves. 

We briefly  mention that, in the case of plane quartic curves,  an interesting relation has been discovered between classical and tropical bitangents in
   \cite{BLMPR16},  \cite{LM17}, and \cite{CJ17}. 
   
On a different vein, the behaviour of the tropicalization map indicates that a moduli space parametrizing theta characteristics on tropical curves defined as above could not be the skeleton of the moduli space of stable spin curves, as odd and even theta characteristics, which live in different connected components in the algebraic world, may tropicalize to the same object. 
For this reason, 
our notion of a spin structure on a tropical curve is a bit different in that it consists of a theta characteristic enriched by a parity function.

Much less is known about higher spin curves, or   arbitrary roots of divisors,   on the tropical side. 
On the other hand the algebro-geometric counterpart
has  been the object of much interest, and the corresponding
 moduli spaces    have been 
widely studied together with their compactifications;    see for example \cite{J00}, \cite{AJ03},  \cite{CCC07}, \cite{C08}. 
The combinatorial structure of these compactifications is quite intricate and we believe it would be of great interest to study it from the tropical, and non-Archimedean, point of view.

\subsection{Outline of the paper}
In Section~\ref{sec:spingraphs} we introduce the notion of  spin graph, tailored to represent the combinatorial data associated to an algebraic   spin curve. 
A spin graph  is a triple $(G,P,s)$ where $G$ is a stable  graph and $(P,s)$ a spin structure, consisting of a cyclic subgraph $P$ of $G$, and a parity function defined on the vertices of the contracted graph $G/P$. We  work   with graphs with legs; although the legs are   irrelevant to the spin structure, their presence  is essential as it enables us   to use recursive   arguments. 

Then we define a  spin tropical curve as a  tropical curve whose underlying graph is endowed with a spin structure. For every tropical curve $\Gamma$, we denote by $S_\Gamma^{\trop}$ the set of spin structures of $\Gamma$.

There is a natural graded poset structure on the set of spin graphs of  fixed genus, given by the contractions of spin graphs, that is, contractions of graphs which are compatible with the spin structures. As is well known, contractions of graphs correspond to specializations of tropical curves, and in fact we use this  poset  structure to construct a moduli space of   spin tropical curves, $\ol{S}_{g,n}^{\trop}$, as an extended generalized cone complex. The cells of this space parametrize  extended  spin tropical curves with a fixed underlying spin graph.  The following statement summarizes the main results of Section~\ref{sec:spingraphs}.

\begin{thmA}
The extended generalized cone complex $\ol{S}_{g,n}^{\trop}$ has pure dimension $3g-3+n$ and has two connected components corresponding to the parity of the spin structure. There is a natural map $\pi^{\trop}\col \ol{S}_{g,n}^{\trop}\ra \ol{M}^{\trop}_{g,n}$ which is a morphism of extended generalized cone complexes, and for every extended tropical curve $\Gamma$ we have $(\pi^{\trop})^{-1}(\Gamma)\cong S_{\Gamma}^{\trop}/\Aut(\Gamma)$. 
\end{thmA}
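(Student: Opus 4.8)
The plan is to verify the four assertions in turn, after recalling that $\ol S_{g,n}^{\trop}$ is the colimit of the diagram that attaches to each spin graph $(G,P,s)$ of genus $g$ with $n$ legs the extended cone $\ol\sigma_{(G,P,s)}:=\ol\RR_{\ge 0}^{\,E(G)}$, with morphisms given by the contractions of spin graphs (acting as the corresponding face maps) and with $\Aut(G,P,s)$ acting on each cone. The forgetful rule $(G,P,s)\mapsto G$, sending a contraction of spin graphs to the underlying contraction of graphs, is a functor to the category of stable graphs, and it induces the identity $\ol\sigma_{(G,P,s)}\ra\ol\sigma_G$ on the associated cones; by the universal property of colimits it descends to a morphism of extended generalized cone complexes $\pi^{\trop}\col\ol S_{g,n}^{\trop}\ra\ol M_{g,n}^{\trop}$. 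It is surjective because every stable graph carries a spin structure (Section~\ref{sec:spingraphs}), so the fiber statement is non-vacuous.

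\emph{The fibers.} Let $\Gamma$ be an extended tropical curve with underlying graph $G$ and length function $\ell\in\ol\sigma_G$, so that $[\Gamma]\in\ol M_{g,n}^{\trop}$ is represented by $(G,\ell)$ up to $\Aut(G)$ and by no proper contraction of $G$. On each cell $\ol\sigma_{(G,P,s)}$ whose graph is $G$, the map $\pi^{\trop}$ is the identity onto $\ol\sigma_G$, while a cell whose graph properly refines $G$ meets $(\pi^{\trop})^{-1}([\Gamma])$ only along the face that contracts to $G$, which is already glued to a cell over $G$. Hence $(\pi^{\trop})^{-1}([\Gamma])$ is the set of pairs $(\ell,(P,s))$ with $(P,s)$ a spin structure on $G$, modulo the gluings of $\ol S_{g,n}^{\trop}$; over the fixed $\ell$ these identify $(P,s)$ and $(P',s')$ exactly when some $\varphi\in\Aut(G)$ fixing $\ell$ — that is, some $\varphi\in\Aut(\Gamma)$ — carries one to the other. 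Therefore $(\pi^{\trop})^{-1}([\Gamma])\cong S_\Gamma^{\trop}/\Aut(\Gamma)$.

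\emph{Purity.} The cell $\ol\sigma_{(G,P,s)}$ has dimension $|E(G)|\le 3g-3+n$, with equality precisely when every vertex of $G$ has genus $0$ and valence $3$, so it suffices to show each spin graph is a contraction of one whose underlying graph is trivalent of this type. Fix $(G,P,s)$ and, by purity of $\ol M_{g,n}^{\trop}$, a trivalent refinement $c\col\wt G\ra G$. Since $c_*\col H_1(\wt G,\ZZ/2\ZZ)\ra H_1(G,\ZZ/2\ZZ)$ is surjective, we may lift $P$ to a cyclic subgraph $\wt P\subseteq\wt G$ meeting each blown-up vertex of $c$ in a forest, so that $c$ carries $\wt P$ to $P$; finally, extending the parity function $s$ to a parity function $\wt s$ on the vertices of $\wt G/\wt P$ amounts, over each vertex of $G/P$, to solving a single $\ZZ/2\ZZ$-linear equation on a nonempty set of vertices, which is always possible. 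Then $(\wt G,\wt P,\wt s)$ is a spin graph contracting to $(G,P,s)$, so $\ol S_{g,n}^{\trop}$ is pure of dimension $3g-3+n$.

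\emph{Connected components, and the main obstacle.} Contractions of spin graphs preserve the parity of $s$, so parity is locally constant on $\ol S_{g,n}^{\trop}$; both values occur already on the one-vertex graph of genus $g$ with $n$ legs, which carries exactly one even and one odd spin structure. These two are the only minimal objects of the poset of spin graphs, and every spin graph contracts — by contracting all its edges — to the one of its own parity; since a generalized cone complex whose poset of cells has a unique minimal element is connected, each parity class is connected, and $\ol S_{g,n}^{\trop}$ has exactly two connected components, indexed by the parity. The step I expect to be the main obstacle is the lifting of $(P,s)$ along $\wt G\ra G$ in the proof of purity — the construction of $\wt P$ and, above all, the solvability of the $\ZZ/2\ZZ$-systems that govern $\wt s$, which is where the precise form of the spin compatibility condition (the tropical counterpart of Cornalba's condition) enters; the remainder is the by-now-standard colimit-of-cones bookkeeping of \cite{ACP15}.
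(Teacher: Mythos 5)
Your definition of $\pi^{\trop}$ differs from the paper's, and the discrepancy matters. You take the forgetful functor $(G,P,s)\mapsto G$ to induce the \emph{identity} $\ol\sigma_{(G,P,s)}\to\ol\sigma_G$ on each cone. The paper instead sends $(\Gamma,P,s)$, $\Gamma=(G,\ell)$, to $(G,\ell')$ with $\ell'(e)=2\ell(e)$ for $e\in E\smallsetminus P$ and $\ell'(e)=\ell(e)$ for $e\in P$; on each cone this is the integral linear map $T(e)=2e$ for $e\notin P$, $T(e)=e$ for $e\in P$. The factor of $2$ is forced by the local structure of the algebraic covering $\pi\colon\Sgnbst\to\Mgnbst$, which looks like $t=s^2$ at the nodes that are blown up in the spin curve (Subsection~\ref{rem:defospin}), and it is precisely what makes $\pi^{\trop}\circ\Trop_{\Sgnbst}=\Trop_{\Mgnbst}\circ\pi^{\an}$ hold in Theorem~C. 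Your identity map is a morphism of cone complexes whose fibers are abstractly isomorphic to $S_\Gamma^{\trop}/\Aut(\Gamma)$, so it satisfies the bare wording of Theorem~A, but it is not the natural map; with the correct $\pi^{\trop}$ the fiber computation needs the rescaling $\rho\colon S_\Gamma^{\trop}\to\ol S_{g,n}^{\trop}$, $(\Gamma,P,s)\mapsto[(\Gamma_P,P,s)]$ with $\ell_P(e)=\ell(e)/2$ for $e\notin P$, as in the paper's Proposition~\ref{prop:pitrop}.

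Your purity argument has exactly the gap you anticipated, and the culprit is the constraint that $\wt P$ meet each blown-up region of $c$ in a \emph{forest}. The sign function must vanish on weight-$0$ vertices, so the equation $\sum_{v'\in\ol{\gamma}^{-1}(v)}\wt s(v')=s(v)$ has no solution when $s(v)=1$ and every $v'$ in the fiber has weight $0$. Take $g=1$, $n=1$, $G$ a single weight-$1$ vertex with one leg, $P=0$, $s\equiv 1$, and $\wt G$ the weight-$0$ vertex with one loop $e$ and one leg: the forest choice gives $\wt P=0$, so $\wt G/\wt P$ has only weight-$0$ vertices and no admissible $\wt s$; you must take $\wt P=\{e\}$, which is a cycle. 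In general, over each positive-weight vertex $\ol v$ of $G/P$ with $s(\ol v)=1$ you must deliberately place a cycle of $c^{-1}(\ol v)$ into $\wt P$, so that $\wt G/\wt P$ acquires a positive-weight vertex mapping to $\ol v$. A clean way to do this is to refine one edge at a time (split a vertex of total valence at least $4$, or trade a unit of weight for a loop that is put into $P'$), checking cyclicity of $P'$ and compatibility of $s'$ at each step. Your treatment of the connected components is correct and matches the paper's Propositions~\ref{pospin} and~\ref{Sgtbprop}.
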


In Section~\ref{sec:algspin} we consider the moduli space of stable spin curves. 
As mentioned earlier, we actually consider more generally the moduli space of $n$-pointed stable spin curves $\overline{\mathcal S}_{g,n}$: for $n=0$ this is the space constructed by Cornalba in \cite{cornalba}, while for $n>0$ this is a particular case of the construction of Jarvis in \cite{J00}. A stable spin curve over a stable curve $X$ is given by a theta characteristic on a partial normalization of $X$. So, a stable spin curve has a combinatorial type which is given by the dual graph, $G$, of $X$, the   set, $P$, of  nodes of the partial normalization, and a parity function, $s$, corresponding to the parity of the space of sections on each connected component of the partial normalization. These data are encoded in the dual spin graph
of the stable spin curve. We set $\mathcal S_{(G,P,s)}$ to be the locus in $\overline{\mathcal S}_{g,n}$ parametrizing stable spin curves with combinatorial type $(G,P,s)$. The following statement summarizes informally some  results of Section~\ref{sec:algspin}.

\begin{thmB}
We have a toroidal open embedding ${\mathcal S}_{g,n}\hookrightarrow \overline{\mathcal S}_{g,n}$ which admits a stratification   by loci of the form $\mathcal S_{(G,P,s)}$, and such that the poset of the closures of the strata is governed by the graded poset of spin graphs.
\end{thmB}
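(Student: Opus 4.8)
The plan is to produce the stratification by transporting known structure from $\Mgnbst$ through the finite map $\pi\col\Sgnbst\to\Mgnbst$, and then to identify the combinatorial bookkeeping with the poset of spin graphs introduced in Section~\ref{sec:spingraphs}. First I would recall that $\Mgnbst\hookleftarrow M_{g,n}$ is the prototypical toroidal embedding of Deligne--Mumford stacks, with boundary stratified by the topological types of stable pointed curves, i.e.\ indexed by the poset of stable graphs $(G)$ of genus $g$ with $n$ legs under edge contraction; the stratum closure $\ov{M}_G$ maps finitely onto the locus of curves whose dual graph admits $G$ as a contraction. The key local input is that, \'etale-locally near a point $[X]$ with dual graph $G$, the space $\Mgnbst$ looks like $[\AA^{|E(G)|}\times(\text{smooth})/\Aut(G)]$ with the $i$-th coordinate a smoothing parameter for the $i$-th node; this is the standard deformation-theoretic picture of $\Mgnbst$ (Knudsen, Deligne--Mumford).

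Next I would analyze $\pi$ \'etale-locally. Fix a stable spin curve $(X,L)$ over $[X]\in\Mgnbst$ with dual spin graph $(G,P,s)$: here $L$ is a theta characteristic on the partial normalization $X_P$ of $X$ at the nodes in $P$, and $s$ records the parity of $h^0$ on each connected component of $X_P$. Cornalba's local description of $\Sgnbst$ (extended to the pointed case via Jarvis) says that a spin structure at a node $e\in E(G)$ either \emph{persists} (the node is not normalized, $e\notin P$) — in which case the smoothing parameter is unobstructed, or it is \emph{blown up / normalized} ($e\in P$), in which case the universal deformation introduces a square-root relation $u_e^2=t_e$ between the spin smoothing parameter $u_e$ and the curve smoothing parameter $t_e$. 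Consequently, \'etale-locally near $(X,L)$, the space $\Sgnbst$ is isomorphic to a quotient of $\AA^{|E(G)\setminus P|}\times \AA^{|P|}\times(\text{smooth})$ by the relevant automorphism group, and the complement of $\mathcal S_{g,n}$ is the union of the coordinate hyperplanes. This exhibits $\mathcal S_{g,n}\hookrightarrow\Sgnbst$ as a toroidal embedding, and shows that the stratum through $(X,L)$ is exactly $\mathcal S_{(G,P,s)}$, of codimension $|E(G)|$, sitting inside the preimage $\pi^{-1}(M_G)$. Openness and denseness of $\mathcal S_{g,n}$ follow because the generic spin curve is smooth with $P=\emptyset$.

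Finally I would identify the closure relations. A stratum $\mathcal S_{(G,P,s)}$ lies in the closure of $\mathcal S_{(G',P',s')}$ precisely when the corresponding spin curves degenerate: on the $M_{g,n}$ side this means $G'$ contracts to $G$, and on the spin side one must check that the degeneration is compatible with $(P,s)$, which by construction is exactly the definition of a \emph{contraction of spin graphs} from $(G',P',s')$ to $(G,P,s)$ (a node newly created under the contraction may be normalized or not, and the parity function on $G/P$ is determined by adjunction-type parity rules, all of which were axiomatized in Section~\ref{sec:spingraphs}). Thus the poset of stratum closures in $\Sgnbst$ is anti-isomorphic to the graded poset of spin graphs, proving the theorem. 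The main obstacle I anticipate is the bookkeeping at the normalized nodes: one has to verify carefully that Cornalba's blow-up construction produces exactly the square-root relation $u_e^2=t_e$ on each such parameter, compatibly with the $\Aut$-action, so that the local toric model is the one claimed and so that the parity function transforms correctly under specialization. Everything else — toroidality, dimension count, density — is a formal consequence of the \'etale-local model together with the corresponding statements for $\Mgnbst$.
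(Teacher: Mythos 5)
Your overall route --- the \'etale-local deformation model of a stable spin curve (square-root relations at the blown-up nodes) to obtain toroidality, plus compatibility of degenerations with spin contractions to obtain the closure poset --- is the same as the paper's, but two points need attention before the main gap. First, a convention error: in the paper's notation the exceptional components lie over the nodes in $R=E(G)\smallsetminus P$, and the theta characteristic lives on the normalization $X^{\nu}_R$ at $R$, whose dual graph has edge set $P$ (the cyclic part); so the relation $u_e^2=t_e$ occurs exactly for $e\notin P$, the opposite of what you wrote (and your later mention of a parity function on $G/P$ is then inconsistent with your own convention). Second, your closure step is asserted rather than proved. The equivalence ``degeneration of spin curves $\Leftrightarrow$ contraction of spin graphs'' is Proposition~\ref{thm-strata}, and it is not definitional: one direction needs Lemma~\ref{purity} (a persistent node is blown up in the special fibre if and only if it is blown up in the general fibre, which rests on the explicit deformation description of \cite{CCC07}, namely that an exceptional component either persists with both of its nodes or disappears with both) together with deformation invariance of parity; the converse direction requires actually producing one-parameter families realizing every spin contraction, so that $\cS_{(G,P,s)}$ really lies in the closure of $\cS_{(H,Q,s')}$ whenever $[G,P,s]\geq[H,Q,s']$.

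The genuine gap is the irreducibility of the strata, which you never address. A ``graded stratification'' in the sense used in the paper (following \cite{CC18}) has irreducible strata, and the paper obtains Theorem~\ref{thm:strata} only by combining Proposition~\ref{thm-strata} with Theorem~\ref{cor:irr}, i.e.\ with the irreducibility of every $\cS_{(G,P,s)}$. This is the hardest ingredient and occupies all of Section~\ref{sec:irr}: the recursive presentation $\hcS_{(G,P,s)}$ of the strata (Proposition~\ref{lem:commute}), explicit theta characteristics and monodromy/path arguments for $3$-regular and basic graphs (Propositions~\ref{lem:easy} and~\ref{prop:b1}), and an induction on $\dim\cM_G$ using Lemma~\ref{lem:specia} together with the smoothness/normalization analysis along the boundary. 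None of this follows formally from the \'etale-local toric model, so your closing claim that everything beyond the local model is ``a formal consequence'' overstates what it buys: without irreducibility you only get a partition into locally closed loci whose closure relations match the poset of spin graphs, not the stratification asserted in Theorem~B.
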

We devote Section~\ref{sec:irr} to 
  prove  that the strata $\mathcal S_{(G,P,s)}$ are irreducible.  For our purposes  we  actually need    a stronger statement, Theorem~\ref{thm:irr}, from which the irreducibility of $\mathcal S_{(G,P,s)}$ follows easily.
The proof is by induction,  based on a direct proof  in some special cases. 
The  methods we develop here can  be  applied to  different  geometric  problems. As an application towards Brill-Noether theory we conclude Section~\ref{sec:irr}  by proving  that, on a general stable curve, a theta characteristic (if it exists) has at most a one-dimensional space of sections.

In Section~\ref{sec:tropSg}  we   describe the skeleton of the Berkovich analityfication $\Sgnban $ of $\overline{\mathcal S}_{g,n}$ and   show that there is an isomorphism, $
\ol{\Sigma}(\ol{\ma{S}}_{g,n})\cong \ol{S}_{g,n}^{\trop}$,  between this skeleton and  our moduli space of spin tropical curves. 
Our construction is consistent with the tropicalization map  for   curves
$ \Trop_{\Mgnbst}:\Mgnban  \to \ol{M}_{g,n}^{\trop}$ and 
the  results of the Section~\ref{sec:tropSg}   imply the following
\begin{thmC}
There is a tropicalization map,
$ 
 \Trop_{\Sgnbst}:\Sgnban  \to \ol{S}_{g,n}^{\trop}
 $, 
 fitting in the following commutative diagram
\begin{eqnarray*}
\SelectTips{cm}{11}
\begin{xy} <16pt,0pt>:
\xymatrix{
\Sgnban \ar[d]_{\pi^{\an}} \ar[rr]^{{\Trop}_{\ol{\ma {S}}_{g,n}}\;}   
  &&  \ol{S}_{g,n}^{\trop}   \ar[d]^{\pi^{\trop}} \\
\Mgnban  \ar[rr]^{{\Trop}_{\ol{\ma {M}}_{g,n}}\;}&& \ol{M}_{g,n}^{\trop}  
 }
\end{xy}
\end{eqnarray*}
  \end{thmC}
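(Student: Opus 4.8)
The plan is to build $\Trop_{\Sgnbst}$ as the retraction of $\Sgnban$ onto its non-Archimedean skeleton, read off through the isomorphism $\ol\Sigma(\Sgnbst)\cong\Sgntb$ established in Section~\ref{sec:tropSg}, and then to obtain the commutativity of the square from the functoriality --- with respect to morphisms of toroidal Deligne--Mumford stacks --- of the skeleton and retraction constructions of \cite{ACP15}.

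The first step is to record that $\pi\col\Sgnbst\to\Mgnbst$ is a morphism of toroidal Deligne--Mumford stacks, for the toroidal structures of Theorem~B and of \cite{ACP15}. A stable spin curve lies on the boundary exactly when its underlying stable curve is singular, so $\pi^{-1}(\cM_{g,n})=\cS_{g,n}$ and $\pi$ maps each stratum $\cS_{(G,P,s)}$ into $\cM_G$; moreover, the étale-local description of $\Sgnbst$ used in Section~\ref{sec:algspin} shows that, near a point of $\cS_{(G,P,s)}$, $\pi$ is a toric morphism followed by a quotient by finite automorphism groups: on smoothing parameters it is the identity on the edges outside $P$ and is $t_e\mapsto\tau_e^2$ on the edges of $P$. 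Hence \cite{ACP15} yields a morphism of extended generalized cone complexes $\ol\Sigma(\pi)\col\ol\Sigma(\Sgnbst)\to\ol\Sigma(\Mgnbst)$ sitting in a commutative square with $\pi^{\an}$ and the two retraction maps.

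The second step is to identify $\ol\Sigma(\pi)$ with $\pi^{\trop}$ under the isomorphisms $\ol\Sigma(\Sgnbst)\cong\Sgntb$ (Section~\ref{sec:tropSg}) and $\ol\Sigma(\Mgnbst)\cong\Mgntb$ (\cite{ACP15}). I would check this one cone at a time: on the cone indexed by a spin graph $(G,P,s)$, $\ol\Sigma(\pi)$ is the linear map prescribed by the toric data above, and once that cone is coordinatized as in Section~\ref{sec:tropSg}, it is precisely the map $\RR_{\ge 0}^{E(G)}\to\RR_{\ge 0}^{E(G)}$ that deletes the pair $(P,s)$ --- that is, $\pi^{\trop}$ restricted to that cone, as pinned down by the fibre formula $(\pi^{\trop})^{-1}(\Gamma)\cong S_\Gamma^{\trop}/\Aut(\Gamma)$ of Theorem~A. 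Since contractions of spin graphs lie over contractions of the underlying stable graphs, these cone-wise identifications agree with the face maps and glue to the equality $\ol\Sigma(\pi)=\pi^{\trop}$. Defining $\Trop_{\Sgnbst}$ to be the retraction followed by $\ol\Sigma(\Sgnbst)\xrightarrow{\sim}\Sgntb$, and using that on the base the same recipe recovers $\Trop_{\Mgnbst}$ (the consistency with curve tropicalization recorded in Section~\ref{sec:tropSg}), then produces the asserted commutative diagram.

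The main difficulty I expect is the bookkeeping forced by the ramification of $\pi$ along the boundary. One must make sure that the isomorphism $\ol\Sigma(\Sgnbst)\cong\Sgntb$ of Section~\ref{sec:tropSg} is normalized so that the edge lengths of a tropical spin curve coincide with those of its image tropical curve --- thereby absorbing the factors of $2$ coming from the blown-up nodes $e\in P$ --- since $\pi^{\trop}$ is defined purely combinatorially, by forgetting the spin structure. Handling these factors of $2$ together with the finite automorphism quotients of the stacks $\Sgnbst$ and $\Mgnbst$ is the delicate point; everything else is formal once the results of Section~\ref{sec:tropSg} are available.
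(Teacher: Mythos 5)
Your overall route is the paper's: establish the toroidal structure and the morphism of cone complexes $\ol\Sigma(\pi)$ via \cite{ACP15}, identify $\ol\Sigma(\Sgnbst)$ with $\Sgntb$ stratum by stratum, and read off $\Trop_{\Sgnbst}$ as the retraction composed with that identification. However, the factor-of-two bookkeeping in your last two paragraphs is backwards, and this is not a matter of normalization: the nodes of $X$ that are blown up to form $\hX_R$ are those in $R=E\smallsetminus P$, not those in $P$. Accordingly, in the \'etale local picture of Subsection~\ref{rem:defospin} one has $\pi^\#(t_i)=s_i^2$ precisely for the edges $e_i\in R=E\smallsetminus P$, and $\pi^\#(t_i)=s_i$ for $e_i\in P$; your ``$t_e\mapsto\tau_e^2$ on the edges of $P$'' and ``blown-up nodes $e\in P$'' both have this reversed. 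Relatedly, $\pi^{\trop}$ is not the map ``that deletes the pair $(P,s)$'': it is the linear map on each cone that doubles the coordinate of every edge in $E\smallsetminus P$ and fixes the others. With your reversed convention the cone-wise comparison between $\ol\Sigma(\pi)$ and $\pi^{\trop}$ would fail, so this needs to be corrected rather than ``absorbed'' into the isomorphism.

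A second point: you invoke the isomorphism $\ol\Sigma(\Sgnbst)\cong\Sgntb$ as already ``established,'' but this is exactly where the real work of Theorem~\ref{thm:skeleton} lies. The nontrivial step is computing the monodromy group of each stratum: one needs $H_{\ma S_{(G,P,s)}}=\Aut(G,P,s)$, and the proof of this passes through the presentation $\ma S_{(G,P,s)}\cong[\hcS_{(G,P,s)}/\Aut(G,P,s)]$ of Proposition~\ref{lem:commute} and, crucially, the irreducibility of $\hcS_{(G,P,s)}$ from Theorem~\ref{thm:irr}. Since Theorem~C is stated as a summary of Section~\ref{sec:tropSg} this is arguably within your rights to cite, but if you are outlining a genuine proof you should at least flag that the isomorphism of cone complexes is where the irreducibility of the strata enters --- otherwise the key mathematical content of the section is being treated as formal.
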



      \subsection{Graphs}
      \label{subsec:graph}
      Throughout the paper, $G=(V,E, L,w)$ is 
a {\it weighted graph with legs},  or simply a \emph{graph},   defined by the following set of data:
 \begin{enumerate}
 \item
 A non empty finite set, $V=V(G)$, of {\it vertices}, endowed with a  {\it weight}  function   $ w\col V\to \N$.
\item
A finite set $H=H(G)$ of  {\it half-edges} endowed with an involution $\iota:H\to H$, and an endpoint  map
$\epsilon:H\to V$. 
 \item
A set  $E=E(G)$  of \emph{edges} of $G$, defined as  the set of pairs  $e=\{h,h'\}$ of (distinct) elements of $H$ interchanged by $\iota$.  
\item
A set  $L=L(G)$ of \emph{legs} of $G$ defined as the set of   fixed points of $\iota$.

\end{enumerate}
   
If the weight function $w$ is identically zero we say $G$ is  {\it weightless}. 

   We set $n=|L(G)|$;
if $n=0$  we   say $G$ is {\it leg-free}. 
To a  graph $G$ we   associate the leg-free graph 
$ 
  [G]:=G-L 
  $, 
obtained from $G$ by   removing the legs. 

Given $v\in V$, we denote by $\ell(v)=|\epsilon^{-1}(v)\cap L|$ the number of legs ending in $v$, 
and by  $\deg(v)=|\epsilon^{-1}(v)\smallsetminus L|$   the number of half-edges other than legs ending in $v$. 

 The {\it genus} of $G$ is  
$
g(G):=\sum_{v\in V} w(v) +b_1(G),
$
where $b_1(G)=|E| - |V| +c(G)$, and $c(G)$ is  the number of connected components of $G$. 
Obviously, $g(G)=g([G])$. 

If $F\subset E $, we write   $\langle F\rangle$  for the leg-free subgraph of $G$ spanned by $F$. We will  abuse notation and write
$b_1(F)=b_1( \langle F\rangle)$.
Next,   $G-F$ denotes the graph obtained by removing $F$, so that $G$ and $G-F$ have the same vertices and the same legs.
We denote by $G-F^o$ the graph obtained by replacing every edge $e=\{h,h'\}$ in $F$ by a pair of legs
having   the same endpoint as $h$ and $h'$. Hence 
$[G-F^o]=[G-F]$ and 
$$
|L(G-F^o)|=|L(G)|+2|F|.
$$

The group of divisors on $G$, written $\Div(G)$, is the free abelian group generated by $V$.
We write $\md=(\md_v, \; v\in V)$ for a divisor of $G$. The {\it degree} of a divisor $\md$ is the integer $|\md|:=\sum_{v\in V}\md_v$. 
The \emph{canonical divisor} of $G$, written $\underline{k}_G$, is defined, for all $v\in V$,  as follows
$$(\underline{k}_G)_v=2w(v)-2+\deg (v).$$
Notice that $\underline{k}_G$  has degree $2g(G)-2c(G)$.

We say that $G$ is {\it stable} (respectively, {\it semistable}) if  it is connected and if  for every vertex $v\in V(G)$ we have
\[
2w(v)-2+\deg(v)+\ell(v)> 0 \quad (\text{respectively}, \geq 0).
\] 

The isomorphism classes of stable graphs of genus $g$ with $n$ legs form a finite set denoted by $\Sgn$.  We will use  the following two well known facts:
 \begin{enumerate}
 \item
 $\Sgn$ is not empty if and only if $2g-2+n>0$.  
 \item
If $G$ is stable and $F\subset E(G)$, then every connected component of $G-F^o$ is stable.
\end{enumerate}

If $G$ is weightless and $\deg(v)+\ell(v)= 3$ for every vertex $v\in V(G)$, we say that $G$ is \emph{$3$-regular.} It is easy to check that a $3$-regular graph of genus $g$ with $n$ legs has $3g-3+n$ edges.

  \subsection{Cycles and Contractions}\label{sec:contractions}
 
Let $G$ be a graph. Following \cite{D97}, we denote by $\cE_G$ (respectively, $\cV_G$) the vector space over $\Bbb {F}_2$ spanned by $E$ (respectively, by $V$).
Every element of $\cE_G$ can be identified with a subset, $F$,  of $E$, and we shall abuse notation by using the same symbol, $F$,  for the vector    and the subgraph, $\langle F\rangle$. If $G$ has legs, then the subgraph spanned by $E$ is $[G]$,
but we shall abuse notation again and denote by $G\in \cE_G$ (instead of $[G]$) the element corresponding to $E$. 

We have a linear map $\partial\col \cE_G\to \cV_G$ such that $\partial(e)=u+v$ where $u$ and $v$ are the ends of $e$.
The kernel of $\partial$ is called the  {\it cycle space} of $G$,  and denoted  
$
\cC_G:=\Ker \partial.
$

It is well known that $|\cC_G|=2^{b_1(G)}$, and  that    $F\in \cE_G$ lies in $\cC_G$ if and only if the graph  $\langle F\rangle$ has no vertex of odd degree 
(such graphs are called {\it Eulerian}  if  they are connected). We refer to elements of $\cC_G$, and to the subgraphs of $G$ they span, as {\it cyclic}. 
If $F\in \cC_G$ and $b_1(F)=1$   we say   that $F$ is a {\it cycle}.

   A  {\it {morphism}} $\eta\col G\to G'$  between two graphs   is given by 
   a map  
   $\eta\col V(G)\cup H(G)\to V(G')\cup H(G')$, compatible with the graph structure. That is,
writing $\epsilon'$ and $\iota'$
   for the endpoint map and the involution of $G'$,
   for every $x\in V(G)\cup H(G)$ we have 
   $$\eta\circ(id_{V(G)}\cup\epsilon)(x)=(id_{V(G')}\cup\epsilon')\circ \eta(x)
   $$
   $$ \eta\circ(id_{V(G)}\cup\iota)(x)=(id_{V(G')}\cup\iota')\circ \eta(x). 
   $$
Hence  
  $\eta$ induces   maps
$\eta^V \col V(G)\to V(G')$, $\eta^E\col E(G)\to E(G')\cup V(G')$ and    $\eta^L\col L(G)\to L(G')\cup V(G')$.
 An   {\it {isomorphism}} is a morphism $\eta$  such that
$\eta^V$ is a weight-preserving  bijection,   $\eta^E$   is a bijection
between  $E(G)$ and $ E(G')$, and   $\eta^L$ an order-preserving bijection between  $L(G)$ and $L(G')$.

We write $\Aut(G)$ for the group of automorphisms of $G$  (i.e., isomorphisms of $G$ with itself). Notice that an automorphism of $G$ is the identity on $L$.

Given   a subset $F\subset E$, the {\it contraction}  of $F$ is the graph $G/F$ together with   the morphism 
  $$
\gamma\col G\la  G/F 
$$ 
given by   contracting all edges $e\in F$. The legs of $G/F$ are 
identified with the legs of $G$, and the edges of $G/F$ are identified with $E(G)\smallsetminus F$.
The weight function $ V(G/F)\to \N$
maps $v\in V(G/F)$ to $g(\gamma^{-1}(v))$.
 
Notice that  $G$ is connected if and only if  so is $G/F$. We   have 
  $$
  b_1(G/F)=b_1(G)-b_1(F),
  $$
 and  the genus of $G$ equals the genus of $G/F$. 
   
   A contraction $\gamma\col G\to H$ induces a   homomorphism
   $ 
  \gamma_*\col \Div(G)\to \Div(H)  $ 
   such that for $\md\in \Div(G)$ and $v\in V(H)$ we have $(\gamma_*\md)_v=\sum_{u\in \gamma^{-1}(v)}\md_u$.

 \subsection{Algebraic curves}\label{sec:algcur}

In this paper, $k$ denotes an algebraically closed field of characteristic not equal to $2$,
and $g,n$   non-negative integers. 

A \emph{curve} is a reduced, projective variety of dimension one,
not necessarily connected, over $k$.
We  assume that our curves have at most nodes as singularities. The genus of a curve is the arithmetic genus. We write $\omega_X$ for the dualizing sheaf of a curve $X$.

A {\it pointed curve} is a pair $(X,\sigma)$, where $X$ is a  curve and $\sigma$ is an ordered finite set of smooth and distinct points of $X$.   

As usual, the {\it dual graph} $G= (V,E,L,w)$ of an  $n$-pointed curve $(X,\sigma)$ is defined so that $V$ is the set of irreducible components of $X$, the weight of a vertex   is the genus of the normalization of the corresponding component,
  $E$ is the set of nodes of $X$,  and to every point in $\sigma$ there corresponds a leg  ending at the vertex
corresponding to the component to which the point belongs.
For $v\in V$ we write $C_v\subset X$ for the corresponding   component, while we use the same notation for edges (resp. legs) of $G$ and nodes (resp.    points) of $X$.
The genus of $G$ is equal to the genus of $X$.

We define $(X,\sigma)$  to be {\it stable}, or {\it semistable}, if  so is its dual graph. 

Let $X$ be a nodal curve and  $E\subset  X$  a smooth rational component; we say $E $ is {\it exceptional} if $|E\cap \ov{X\smallsetminus E}|=2$ and $|\sigma\cap E|=0$. A node of $X$ is \emph{exceptional} if it lies on an exceptional component of $X$. A stable curve has no exceptional component.

A pointed curve   is   {\it quasistable} if  it is semistable and if 
    two exceptional components of $X$ do not intersect.

Let $(\wh X,\sigma)$ be a quasistable   curve. There is a stable   curve, written $st(\wh X,\sigma)$, obtained by contracting all the exceptional components of $\wh X$; we have a unique contraction morphism $\wh X\ra st(\wh X)$.  Conversely, given a stable   curve $(X, \sigma)$ and a set of nodes $R$ of $X$, we   construct the quasistable   curve $(\wh X_R,\sigma)$, called the {\it blow up} of $X$ at $R$, such that $X=st(\wh X_R)$, where the set of exceptional components of $\wh X_R$ is contracted to $R$. We write $X^{\nu}_R$ for the normalization of $X$ at $R$, so that
\begin{equation}\label{eq:XtildeR}
\hX_R = X^{\nu}_R\cup (\cup_{r\in R} E_r)
\end{equation}
with $E_r$ an exceptional component.
We shall view $ X^{\nu}_R$ as a pointed curve,  $(X^{\nu}_R, \sigma_R)$, 
as follows. Let $\nu_R\col X^{\nu}_R\to X$ be the normalization map,
then
$\sigma_R=\sigma\cup \nu_R^{-1}(R)$. So, the dual graph of $(X^{\nu}_R, \sigma_R)$ is $G-R^o$.
Notice that every connected component  of $(X^{\nu}_R, \sigma_R)$ is stable, as a pointed curve.

The  dual graph of $\hX_R$ will be denoted by $\hG_R$. Clearly $\hG_R$ is obtained from $G$ by inserting a vertex,
$\hv_r$, in the interior of every edge in $R$.
We have a natural inclusion $V(G)\subset V(\widehat{G}_R)$ and we refer to the vertices of type $\hv_r$ as the {\it exceptional vertices} of $\widehat{G}_R$.  
Summarizing, we have
\[
V(\hG_R)=V(G)\cup \{\hv_r,\;  \forall r\in R\}.
\]

Let $\Pic(X)$ be the Picard scheme of a curve $X$, parametrizing line bundles on $X$ up to isomorphism. There is a    homomorphism, $\mdeg\col \Pic(X)\to \Div(G)$ sending a line bundle to its multidegree.
We have
$ 
\mdeg\  \omega_X=\underline{k}_G.
$ 

A {\it theta characteristic}  is a line bundle $L$ on $X$ such that $L^2\cong \omega_X$. 
We set
\begin{equation}
 \label{SZ0}
{\cS}_{X}^0:=\{L\in \Pic(X):\  L^2\cong \omega_{X}\}.
\end{equation}
We have
\begin{equation}
 \label{degCv}
\deg_{C_v} \omega_{X}=2w(v)-2 +\deg(v)
\end{equation} for every vertex $v$ of $G$. Hence
 a theta characteristic  on $X$ exists if and only if $\deg(v)$ is  even for every   $v$, if and only if $G$ is cyclic.
 
 Recall that the parity of a theta characteristic, $L$, is the parity of $h^0(X,L)$, and it is  deformation invariant. We write ${\cS}_{X}^{0,+}$ for the  even  theta characteristics  and ${\cS}_{X}^{0,-}$ for the odd ones, so that
$ 
{\cS}_{X}^0={\cS}_{X}^{0,+}\sqcup {\cS}_{X}^{0,-}.
$

The moduli stack of stable $n$-pointed curves of genus $g$ is denoted by $\Mgnbst$. We let $\Mgnst$ be the substack parametrizing smooth    curves.   
The map  
$ 
\Mgnbst \la \Sgn 
$, 
sending  a stable pointed curve to its dual graph, is a surjection to which we will return  later.
\

From now on, $X=(X,\sigma)$ will be  a stable curve of genus $g$ with $n$ marked points, and $G$ its dual graph. We shall always assume that     $2g-2+n>0$.

 \section{Spin graphs and spin tropical curves}\label{sec:spingraphs}
\label{sec:spingraphs}
   \subsection{Spin graphs}
We want to define spin structures on a graph similarly to what is done for curves. As we shall see, a spin structure on a curve $X$ is essentially a theta characteristic on a partial normalization of $X$. Since partial normalizations of $X$ correspond to spanning subgraphs of $G$,  we will define a spin structure on a graph to be a theta characteristic on a spanning subgraph, plus  some extra structure.

Let $G$ be a graph and $\mk_G$ its canonical divisor. If every vertex of $G$ has even degree,  with \eqref{degCv} in mind we   consider 
 the divisor
 $$
 (\mk_G/2)_v=w(v)-1+\deg(v)/2,\quad  \forall v\in V.
 $$
 This divisor  makes sense if and only if $G$ is cyclic, and it plays the role of a theta characteristic on $G$. 
Using the notation introduced in Subsection~\ref{sec:contractions},   we introduce   the following  definition.
\begin{defi}\label{def:spingraph}
 A {\it spin graph} is a triple $(G,P,s)$ such that $G$ is a graph, $P\in \cC_G$ and $s \col V(G/P)\to  \ZZ/2\ZZ=\{0,1\}$ is a  {\it sign} (or {\it parity}) function such that $s(v)=0$ for every vertex $v$ of $G/P$ of weight $0$.  We call $(P,s)$   a {\it spin structure} on $G$. 
 
 The {\it parity} of a spin graph $(G,P,s)$ is the parity of $\sum_{v\in V(G/P)}s(v)$.
\end{defi}

  Recall that  $G$ is assumed to have $n$ legs. Yet,
a spin structure on $G$  is the same as a spin structure on the leg-free graph $[G]$.

 We denote by $SP_G$ the set of spin structures on $G$, and we write
 $$
 SP_G=SP_G^+\sqcup SP_G^-
 $$
 where $SP_G^+$ (resp. $SP_G^-$) denotes the set of even (resp. odd) spin structures.

If the sign function is identically zero, we denote it by $s_0$. The trivial spin structure, $(0,s_0)$, is of course even, hence $SP^+_G\neq \emptyset$.

Observe that 
 $g(G)=0$ if and only if $SP_G=SP_G^+$, if and only if $|SP_G|=1$.

If $G$ has one vertex and no edges, a spin graph on it is of type $(G,0,s)$; in this case we     denote such a spin graph also by $(G,G,s)$.

We have a   forgetful  map
$ 
\phi\col SP_G\la \cC_G
$ mapping $(P,s)$ to $P$.
Set
 $$
SP_{(G,P)}=\phi^{-1}(P), \quad   SP^+_{(G,P)} =SP_{(G,P)}\cap SP_G^+ ,  \quad SP^-_{(G,P)} =SP_{(G,P)}\cap SP_G^-.
$$

\begin{remark}
 Let $G $ be a graph and   $G_0$ the underlying weightless graph.
 Then  $\cC_G=\cC_{G_0}$ and  
 $SP_{G_0}\subset SP_G$,   preserving the parity.
  \end{remark}

For $P\in \cC_G$ 
  we set $R:=E\smallsetminus P$.
With the notation   in \ref{subsec:graph}, define
\[
\ov P:=G- R^o  
\]
so that $\ov{P}$ has the same vertices as  $G$ and is endowed with $(n+2|R|)$ legs. 

Notice that $\ov P$ is connected if and only if $P$ is a spanning subgraph of $G$, if and only if  $|V(G/P)|=1$.
More precisely, the connected components of $\ov P$ correspond to the vertices of $G/P$, so that    we write
 $\{\ov P_v, \  v \in V(G/P)\}$ for  the set of  connected components of $\ov P$.
  
\begin{defi}
 \label{defccspin}
Let  $(G,P,s)$ be a spin graph. For any $v\in V(G/P)$ we define 
  the spin structure $(\ov P_v, s_v)$ on $\ov P_v$ such that $s_v(v):=s(v)$ for $v \in V(G/P)$ (we identify the   vertex of $\ov P_v/{\ov P_v}$ with $v\in V(G/P)$). 

We call $(\ov P_v,s_v)$, for $v\in V(G/P)$, the \emph{connected components} of $(G,P,s)$. 
 \end{defi}

Connecting with the initial observation, for any spin structure $(P,s)$ on $G$, we define the divisor
$ 
\md^P\in \Div(\ov P)
$ as follows
\begin{equation}\label{E:tcg}
\md^P_v=w(v)-1+\deg_{\oP}(v)/2,
\end{equation}
where $\deg_{\oP}(v)$ is the degree of $v$ as a vertex of ${\oP}$. Hence $2\md^P=\mk_{\ov P}$
so that  $\md^P$ 
will be viewed as the theta characteristic of  $\ov{P}$.

We denote by $c^+(\ol P)$ the number of connected components of $\ol P$ of positive genus, i.e. the number of vertices of $G/P$ of positive weight. 

\begin{lemma}
\label{SGP} Let  $P\in \cC_G$. Then
$ 
|SP_{(G,P)}| = 2^{c^+(\ol P)}.
$ 
Moreover,
$$
\quad |SP^+_{(G,P)}|=|SP^-_{(G,P)}|= 2^{c^+(\ol P)-1},
 $$
unless $P=0$ and $G$ is weightless, in which case $|S^+_{(G,0)}|=1$ and  $S^-_{(G,0)}=\emptyset$.
 In particular
 $$
 |SP_G|= \sum_{P\in \cC_G}2^{c^+(\ol P)}\geq  2^{b_1(G)+1}-1,
 $$
 with equality if and only if  $G$ is weightless and $c^+(\ol P)=1$ for every $P\neq 0$.
\end{lemma}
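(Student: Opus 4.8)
The plan is to compute $|SP_{(G,P)}|$ by unpacking Definition~\ref{def:spingraph}: a spin structure with underlying cycle $P$ is exactly a choice of parity function $s\col V(G/P)\to\ZZ/2\ZZ$ subject to the constraint $s(v)=0$ whenever $w_{G/P}(v)=0$. Since $c^+(\ol P)$ counts precisely the vertices of $G/P$ of positive weight (as noted right before the lemma), the function $s$ is free on those $c^+(\ol P)$ vertices and forced to be $0$ on the rest, giving $|SP_{(G,P)}|=2^{c^+(\ol P)}$. I would state this as the first step.

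For the even/odd split, the parity of $(G,P,s)$ is the parity of $\sum_{v}s(v)$, a $\ZZ/2\ZZ$-linear functional on the space $(\ZZ/2\ZZ)^{c^+(\ol P)}$ of admissible $s$. The key observation is that this functional is \emph{nonzero} exactly when there is at least one positive-weight vertex, i.e. $c^+(\ol P)\geq 1$; a nonzero linear functional on an $\FF_2$-vector space of dimension $c^+(\ol P)$ has kernel and complement-of-kernel both of size $2^{c^+(\ol P)-1}$, so $|SP^+_{(G,P)}|=|SP^-_{(G,P)}|=2^{c^+(\ol P)-1}$. The sole exception is $c^+(\ol P)=0$: then the only admissible $s$ is $s_0$, which is even, so $|SP^+_{(G,P)}|=1$ and $SP^-_{(G,P)}=\emptyset$. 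Now $c^+(\ol P)=0$ means every connected component of $\ol P$ has genus $0$; recalling $g(G)=\sum w(v)+b_1(G)$ and $b_1(G)=b_1(P)+b_1(G/P)$ with $P$ cyclic so $b_1(P)$ absorbs into... more simply: the components of $\ol P$ have total genus $\sum_{v\in V(G)}w(v)+b_1(P)$ — wait, one must be slightly careful; the cleanest route is to note $c^+(\ol P)=0$ iff all weights vanish on $V(G)$ \emph{and} $b_1(P)=0$. But $P\in\cC_G$ with $b_1(P)=0$ forces $P=0$ (a cyclic subgraph with first Betti number zero has no edges). Hence $c^+(\ol P)=0$ iff $G$ is weightless and $P=0$, which is exactly the stated exceptional case. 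I would insert this short argument to justify the exception.

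For the final formula, sum over $P\in\cC_G$: $|SP_G|=\sum_{P\in\cC_G}2^{c^+(\ol P)}$. To get the inequality, split off the $P=0$ term. If $G$ is not weightless, then $c^+(\ol P)\geq 1$ for all $P$ including $P=0$, so each summand is $\geq 2$ and $|SP_G|\geq 2\cdot 2^{b_1(G)}=2^{b_1(G)+1}>2^{b_1(G)+1}-1$. If $G$ is weightless, the $P=0$ term contributes $2^{c^+(\ol 0)}=2^0=1$, while each of the remaining $2^{b_1(G)}-1$ terms contributes $\geq 2$, so $|SP_G|\geq 1+2(2^{b_1(G)}-1)=2^{b_1(G)+1}-1$, with equality iff every nonzero $P$ has $c^+(\ol P)=1$. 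This matches the claimed equality condition. The only genuinely delicate point is the bookkeeping in the exceptional-case analysis — specifically confirming that $c^+(\ol P)$ as defined (number of positive-weight vertices of $G/P$) agrees with the number of positive-\emph{genus} components of $\ol P$ and pinning down exactly when it is zero; everything else is elementary counting over $\FF_2$. I expect that identification of $c^+(\ol P)$ to be where one must be most careful, though it follows from the remark that vertices of $G/P$ correspond to components of $\ol P$ together with the fact that $\ol P_v/\ol P_v$ has weight $s_v(v)$... rather, that the genus of $\ol P_v$ equals the weight of $v$ in $G/P$, which is immediate from the definition of the weight function on a contraction.
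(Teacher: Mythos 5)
Your proof is correct and follows the same reasoning the paper compresses into a single line (the identification of positive-weight vertices of $G/P$ with positive-genus components of $\ol P$, followed by elementary $\FF_2$-counting); you have simply spelled out the details. The one place you hesitate — that $c^+(\ol P)=0$ forces $G$ weightless and $P=0$ — is handled correctly: a cyclic $P$ with $b_1(P)=0$ would be a forest with all vertex degrees even, which has no edges, and the identification of weights in $G/P$ with genera of components of $\ol P$ is exactly the sentence the paper offers as its entire proof.
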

\begin{proof}
 A vertex of $G/P$ has weight nonzero if and only if it is 
obtained contracting a connected component of $P$ of positive genus. The statement follows trivially from this.
\end{proof}

\subsection{Contractions of spin graphs}
Let $\gamma\col G\to G'$ be a morphism of graphs.  
 The maps  $\gamma^V$ and $\gamma^E$ (see Subsection \ref{sec:contractions}) induce  two  linear maps
$$
 \gamma^V_*\col\cV_G\la \cV_{G'} \quad \quad \text{and} \quad \quad \gamma^E_*\col\cE_G\la \cE_{G'},
$$
where $\gamma^E_*(e)=e$  if  $ \gamma^E(e)\in E(G')$ and  $\gamma^E_*(e)=0$ otherwise.

The following easy   lemmas   give some useful functorial properties of a contraction $\gamma\col G\ra G'$ with respect to the maps $\gamma_*^V$ and $\gamma_*^E$. The same statements are easily seen to hold  also when $\gamma$ is an isomorphism.

\begin{lemma}
\label{funct}
Let $\gamma\col G\to G'=G/F$ be a contraction. 
\begin{enumerate}[(a)] \item
The following diagram of linear maps is commutative
 $$
\xymatrix@=.4pc{
&&\cE_G  \ar[dd]_{\partial}    \ar[rrrr]^{\gamma^E_*}  &&&&\cE_{G'} \ar[dd]^{\partial}  &&&   &&&&  \\
\\
 &&\cV_G \ar[rrrr]^{\gamma^V_*} &&&&\cV_{G'}
}
$$
\item
 The restriction of $\gamma^E_*$ to $\cC_G$ induces a surjection, denoted as follows $$\gamma_*\col \cC_G\la \cC_{G'}.$$
  \item
  If $\widetilde{\gamma}\col G'\to G''$ is a contraction, then $(\widetilde{\gamma}  \gamma)_*=\widetilde{\gamma}_*  \gamma_*$.
 \item
 Let $P\in \cC_G$ and set $P' :=\gamma_*P$. The following diagram is commutative
 $$
\xymatrix@=.4pc{
&&G  \ar[dd]    \ar[rrrr]^{\gamma}  &&&& G'  \ar[dd] &&&   &&&&  \\
\\
&&G/P \ar[rrrr] ^{\ov{\gamma}}&&&&G'/P'
}
$$
where $\ov{\gamma}$ is a contraction and $G'/P'=G/(F\cup P)$.
\end{enumerate}
 \end{lemma}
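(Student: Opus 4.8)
\textbf{Proof plan for Lemma~\ref{funct}.} The plan is to verify each of the four assertions in turn, working directly from the definitions of $\gamma^V_*$, $\gamma^E_*$, and $\partial$. For part (a), I would check commutativity of the square on generators: for an edge $e=\{h,h'\}$ of $G$ with endpoints $u,v$, either $\gamma^E(e)\in E(G')$, in which case $\gamma^E_*(e)$ is an edge of $G'$ with endpoints $\gamma^V(u),\gamma^V(v)$, so $\partial\gamma^E_*(e)=\gamma^V(u)+\gamma^V(v)=\gamma^V_*(\partial e)$; or $e\in F$, in which case $\gamma^E_*(e)=0$ and $\gamma^V(u)=\gamma^V(v)$, so $\gamma^V_*(\partial e)=\gamma^V(u)+\gamma^V(v)=0$ in $\cV_{G'}$ (characteristic $2$). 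Either way the diagram commutes on generators, hence everywhere by linearity.

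Part (b) is immediate from (a): if $P\in\cC_G=\Ker\partial$, then $\partial\gamma^E_*(P)=\gamma^V_*(\partial P)=\gamma^V_*(0)=0$, so $\gamma^E_*(P)\in\cC_{G'}$; this defines $\gamma_*\col\cC_G\to\cC_{G'}$. For surjectivity I would argue as follows: given $P'\in\cC_{G'}$, view $P'\subset E(G')=E(G)\smallsetminus F$ as a subset $F_0$ of $E(G)$; then $\langle F_0\rangle$ in $G$ may have vertices of odd degree, but all such vertices lie in $F$-components (since in $G'$ the image $P'$ is Eulerian), so adding a suitable subset of $F$ — e.g. taking a spanning-tree argument within each connected component of $\langle F\rangle$ to pair up the odd-degree vertices — produces $P\in\cC_G$ with $\gamma^E_*(P)=P'$. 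Part (c) is a routine check on generators $e\in E(G)$, splitting into the three cases according to whether $e$ survives in $G'$, is contracted by $\gamma$, or survives in $G'$ but is contracted by $\widetilde\gamma$; in each case both $(\widetilde\gamma\gamma)_*$ and $\widetilde\gamma_*\gamma_*$ give the same answer.

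For part (d), the key observation is the identification $G'/P'=G/(F\cup P)$ of graphs: contracting $F$ and then $P'=\gamma_*P$ (which as a set of edges of $G'$ is identified with $P\smallsetminus F\subset E(G)$) has the same effect as contracting $F\cup P$ directly, since contraction of edges is insensitive to order and $\gamma^E_*$ simply records which edges of $P$ are not already in $F$. Given this, both composite maps $G\to G'/P'$ in the square are realized as the single contraction $G\to G/(F\cup P)$, so the square commutes and $\ov\gamma$ is the induced contraction $G/P\to G/(F\cup P)$. The weight-compatibility (that $\ov\gamma$ respects weights) follows because the genus of a contracted cluster of vertices is additive over nested contractions, exactly as recorded in Subsection~\ref{sec:contractions}.

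The only step requiring genuine care is the surjectivity in part (b); everything else is a generator-by-generator bookkeeping check over $\FF_2$. The subtlety there is to produce, from an Eulerian subgraph of $G'$, an Eulerian subgraph of $G$ mapping onto it — one must correct the parity of degrees at vertices contracted by $\gamma$, and the cleanest way is to observe that within each connected component of $\langle F\rangle$ the odd-degree vertices (with respect to the lifted edge set) come in an even number, so they can be joined in pairs by edge-disjoint paths in the tree, and the symmetric difference of the lift with these paths is the desired cycle class.
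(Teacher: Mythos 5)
Your proof is correct and follows the same line the paper intends; the paper's own proof simply declares parts (a)--(c) ``clear'' and for (d) gives exactly the identification $G'/P' = (G/F)/(P\smallsetminus F) = G/(F\cup P)$ that you describe. The one place where you supply genuine content the paper omits is the surjectivity in (b): your argument --- that the odd-degree vertices of the lift $\langle F_0\rangle$ come in even numbers inside each fiber of $\gamma^V$ (i.e.\ each $F$-component), so one can correct them by taking an appropriate $A\subset F$ with $\partial A=\partial F_0$ inside each component, whence $P=F_0\cup A\in\cC_G$ maps to $P'$ --- is the standard and correct way to make this precise.
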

\begin{proof}
 The first three parts are clear. 
  By definition, $P'=P\smallsetminus F$. Hence 
 $ 
 G'/P'=(G/F)/ P' =(G/F)/( P \smallsetminus F)=G/(F\cup P)
 $ 
 \end{proof}
  
  \begin{lemma}
\label{ontoS}
Let $\gamma\col G\to G'$ be a contraction.
\begin{enumerate}[(a)]
 \item
 \label{ontoSa}
There  is  a parity preserving  map (abusing notation)
 $$
 \gamma_* \col SP_G\to SP_{G'};\quad \quad  (P,s)\mapsto (P', s' )
 $$  
 where  $P'=\gamma_*P$ and 
 $s' $
 maps  $v'\in V(G'/P')$ to $\sum_{v\in  {\ov{\gamma}}^{-1}(v')}s(v)$.  
 \item
  \label{ontoSb}
  If $\widetilde{\gamma}\col G'\to G''$ is a contraction, then $(\widetilde{\gamma}  \gamma)_* =\widetilde{\gamma}_*   \gamma_* $.
\end{enumerate}
\end{lemma}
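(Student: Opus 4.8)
The statement to prove is Lemma~\ref{ontoS}: for a contraction $\gamma\colon G\to G'$, the induced map $\gamma_*\colon SP_G\to SP_{G'}$ is well-defined and parity-preserving (part (a)), and composition of contractions is respected (part (b)).

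\textbf{Approach.} The plan is to verify directly that the recipe $(P,s)\mapsto (P',s')$ with $P'=\gamma_*P$ and $s'(v')=\sum_{v\in\ov\gamma^{-1}(v')}s(v)$ actually produces a spin structure on $G'$, and then to chase diagrams for functoriality. First I would invoke Lemma~\ref{funct}(b) to know that $P'=\gamma_*P$ genuinely lies in $\cC_{G'}$, so the cyclic-subgraph part of the datum is legitimate; and Lemma~\ref{funct}(d) gives the commutative square identifying $G'/P'$ with $G/(F\cup P)$ together with the further contraction $\ov\gamma\colon G/P\to G'/P'$, which is exactly the map used to define $s'$. So $s'$ is a well-defined function $V(G'/P')\to\ZZ/2\ZZ$ once we know $\ov\gamma$ is a contraction, which Lemma~\ref{funct}(d) supplies.

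\textbf{Key steps.} (1) Check the defining condition of a spin structure for $(P',s')$: if $v'\in V(G'/P')$ has weight $0$, then $s'(v')=0$. The weight of $v'$ is the genus of the subgraph $\ov\gamma^{-1}(v')$ of $G/P$; genus $0$ forces every vertex $v$ in that fiber to have weight $0$ and the fiber to be a tree, so $s(v)=0$ for all such $v$ by the spin-structure axiom on $(G,P,s)$, whence $s'(v')=\sum s(v)=0$. (2) Parity preservation: the total parity of $(P',s')$ is $\sum_{v'\in V(G'/P')}s'(v')=\sum_{v'}\sum_{v\in\ov\gamma^{-1}(v')}s(v)=\sum_{v\in V(G/P)}s(v)$, since $\ov\gamma^V$ is surjective and the fibers partition $V(G/P)$; this last sum is the parity of $(G,P,s)$. (3) Functoriality, part (b): for $\wt\gamma\colon G'\to G''$, the edge-level statement $(\wt\gamma\gamma)_*=\wt\gamma_*\gamma_*$ on $\cC_G$ is Lemma~\ref{funct}(c), so the underlying cyclic subgraphs agree; for the sign functions one composes the two commutative squares of Lemma~\ref{funct}(d) to get a commutative square with top $G\to G''$ and bottom $G/P\to G''/P''$ factoring through $G'/P'$, and then the double-sum over nested fibers telescopes exactly because $\ov{\wt\gamma\gamma}=\ov{\wt\gamma}\,\ov\gamma$ as contractions of $G/P$.

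\textbf{Main obstacle.} None of this is deep; the only point requiring a little care is step~(1) — verifying that a weight-$0$ vertex of $G'/P'$ pulls back to a subgraph of $G/P$ all of whose vertices have weight $0$ — which rests on the fact that contracting edges only adds to weights (a vertex of a contracted graph has weight equal to the genus of its preimage, which is $\geq$ the sum of the weights contracted into it), together with the axiom that $s$ already vanishes on weight-$0$ vertices of $G/P$. I would also remark, as the paper does, that the same verifications go through verbatim when $\gamma$ is an isomorphism rather than a contraction, since then every fiber is a single vertex and the sums are trivial.
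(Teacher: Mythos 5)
Your proposal is correct and follows essentially the same route as the paper's proof: cyclicity of $\gamma_*P$ via Lemma~\ref{funct}, the observation that a weight-zero vertex of $G'/P'$ pulls back under the contraction $\ov{\gamma}$ only to weight-zero vertices of $G/P$ (so $s'$ vanishes there), with parity preservation and part (b) following from the fiberwise partition of $V(G/P)$ and Lemma~\ref{funct}(c),(d). The paper simply leaves the parity and composition checks as "clear," which you have spelled out.
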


\begin{proof} 
$\gamma_*P$ is cyclic by Lemma \ref{funct} . Hence to show that the image of $ \gamma_*$ is in $SP_{G'}$ it suffices
 to   check  that   $s'(v')=0$
for every vertex $v'$ in $G'/P'$  of weight zero.
Since $\ov{\gamma}\col G/P\to G'/P'$ is a contraction, 
 if $v'$ has weight zero then every vertex $v$ of $G/P$ 
 mapping to $v'$ has weight zero, hence $s(v)=0$
 and hence $s'(v')=0$. 

The fact that $ \gamma_*$ preserves the parity, and 
part \eqref{ontoSb}, are clear.
 \end{proof}

\begin{defi}
\label{defcon}
A \emph{contraction} (resp. an {\it isomorphism})  from a spin graph $(G,P,s)$ to a spin graph $(G',P',s')$ is a contraction (resp. an isomorphism) of graphs $\gamma\col G\to G'$  such that $\gamma_*(P,s)=(P',s')$. 

 We write  $\gamma\col(G,P,s)\to (G',P',s')$.
\end{defi}

Recall that  $\Sgn$ is the set of isomorphism classes of stable graphs of genus $g$ with $n$ legs. 
The contraction of a graph in $\Sgn$ is again in $\Sgn$, and 
 $\Sgn$ is a  poset with respect to the following partial order
\[
 G \geq G',   \text { if }  G'=G/F    \  \text{ for some } F\subset E(G)
\]
(i.e.   if there is a contraction $G\ra G'$).
It is well known that $\Sgn$ is graded, and a rank function on it  is $G\mapsto |E(G)|$.
Moreover,
$\Sgn$  is connected as a poset
(i.e. the associated graph is connected).

\begin{remark}
\label{Gg}
 A  way to see that $\Sgn$ is connected,   useful for later,
 is to consider the graph $G_{g,n}$ made of a single vertex of weight $g$ with $n$ legs.
Then for every $G\in \Sgn$ we have $G\geq G_{g,n}$, and hence $\Sgn$ is connected.
\end{remark}
 
 \subsection{Posets of spin graphs of genus $g$}
 The poset of cyclic graphs of genus $g$ with $n$ legs is the set
$$
 \cC_{g,n}:=\bigsqcup_{G\in \Sgn}\cC_G
$$
partially ordered as follows: $(G,P)\geq (G',P')$ if there exists a contraction $\gamma\col G\to G'$ such that $\gamma_*P=P'$.

Next, the poset of stable spin graphs of genus $g$ is the set 
\[
\Spgn:=\bigsqcup_{G\in \Sgn}SP_G,
\]
endowed with the following partial order:
 $(G,P,s)\geq (G',P',s')$
if there exists a contraction $\gamma\col G\to G'$ such that $\gamma_*(P,s)=(P',s')$.
  
  It is easy to see that the above are partial orders.

\begin{prop}
\label{pospin}
\begin{enumerate}[(a)]
 \item
 The poset $\Cgn$ is connected and the  forgetful map $\Cgn\to \Sgn$   is a quotient of posets.
 \item
 \label{pospinb}
 If $g>0$, the poset $\Spgn$ has two connected components, $\Spgn^+$ and $\Spgn^-$ corresponding to, respectively,
 even and odd spin graphs.
 
  If $g=0$, then  $\Spgn$ is connected and $\Spgn=\Spgn^+$. 
 \item
\label{pospinc}
 The forgetful maps below  are  quotients of posets, the first and the second one for every $g$, while the third one for $g>0$,
$$
\Spgn^+\la \Cgn,\quad \Spgn^+\la \Sgn,  \quad \Spgn^-\la \Sgn.
$$
\item
\label{pospind}
The map sending $(G,P)$ to $|E|$ (respectively, $(G,P,s)$ to $|E|$)  is a rank on $\Cgn$ (respectively, on $\Spgn$).
\end{enumerate}
\end{prop}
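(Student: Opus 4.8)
The plan is to reduce all four statements to two elementary inputs. The first: contracting \emph{all} the edges of a stable graph $G\in\Sgn$ yields the one-vertex graph $G_{g,n}$ of Remark~\ref{Gg}, and by Lemma~\ref{ontoS} the associated contraction carries a spin structure $(P,s)$ on $G$ to the spin structure on $G_{g,n}$ (which necessarily has trivial cyclic part) whose value at the unique vertex is the parity $\sum_{v\in V(G/P)}s(v)$ of $(G,P,s)$. The second: by Lemma~\ref{SGP}, every $(G,P)\in\Cgn$ admits at least one even spin structure, and, provided $g>0$, every $G\in\Sgn$ admits at least one odd one --- if $G$ has a vertex of positive weight, take $P=0$ with sign value $1$ there; if $G$ is weightless then $b_1(G)=g>0$, so take $P$ a cycle and sign value $1$ at the positive-weight vertex it produces in $G/P$.

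I would treat (a), (b) and (c) together. For each forgetful map in (a) and (c), surjectivity is precisely one of the existence statements just recalled, and order-preservation is immediate from the definitions of the partial orders; the lifting property of a quotient of posets then holds by \emph{pushing relations down}: given $b_1\le b_2$ in the target, realized by a contraction $\gamma$, pick any preimage $a_2$ of $b_2$ and put $a_1:=\gamma_*(a_2)$; by Lemma~\ref{ontoS} $a_1$ maps to $b_1$, lies in the same parity component as $a_2$, and satisfies $a_1\le a_2$. For connectedness, the first input shows $(G_{g,n},0)$ is a least element of $\Cgn$, so $\Cgn$ is connected; likewise $(G_{g,n},0,s_0)$ is least in $\Spgn^+$, and when $g>0$ the spin structure with sign value $1$ at the weight-$g$ vertex is least in $\Spgn^-$, so both $\Spgn^\pm$ are connected and nonempty; since contractions and isomorphisms preserve parity (Lemma~\ref{ontoS}), no element of $\Spgn^+$ is comparable to one of $\Spgn^-$, whence $\Spgn^\pm$ are exactly the two connected components of $\Spgn$. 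When $g=0$ every graph in $\mathcal{G}_{0,n}$ is a weightless tree, hence carries only the trivial even spin structure, so $\Spgn=\Spgn^+$, connected with least element $(G_{0,n},0,s_0)$.

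For (d), I would argue, exactly as for the poset $\Sgn$, that every covering relation $x<y$ in $\Cgn$ (resp.\ in $\Spgn$) comes from contracting a \emph{single} edge. Indeed, if the contraction $\gamma\colon G_y\to G_x$ realizing $x<y$ collapses a set $F$ with $|F|\ge 2$, pick $e\in F$, factor $\gamma=\tilde\gamma\circ\gamma'$ through $G_y/\{e\}$, and push the spin datum of $y$ forward along $\gamma'$; by Lemma~\ref{funct}(c) and Lemma~\ref{ontoS}(b) this produces a $z$ with $x<z<y$, strict on both sides because $|E|$ takes three distinct values, contradicting that $x<y$ is a cover. Since $x<y$ always forces $|E(G_x)|<|E(G_y)|$ and now covers increment $|E|$ by exactly $1$, and since the minimal elements --- $(G_{g,n},0)$ in $\Cgn$, and $(G_{g,n},0,s_0)$ together with $(G_{g,n},0,s_1)$ for $g>0$ in $\Spgn$ --- all have $|E|=0$, the map $|E|$ is a rank function (so these posets are graded).

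The only step that is not purely formal is the existence of an odd spin structure on every stable graph of positive genus, needed for surjectivity of $\Spgn^-\to\Sgn$ in (c); this is the short dichotomy of the first paragraph, resting on Lemma~\ref{SGP}. I would also emphasize, as the one conceptual point, that the lifting property for a quotient of posets must be checked by descending along $\gamma_*$, not by ascending: one cannot in general extend a chosen even spin structure on a contraction to one upstairs with a prescribed cyclic class, since Lemma~\ref{SGP} shows that contracting a cycle of $G$ may create a positive-weight vertex downstairs whose preimage upstairs carries no weight, so $\gamma_*$ need not be surjective on the relevant fibers. Everything else is a formal consequence of the functoriality recorded in Lemmas~\ref{funct} and~\ref{ontoS}.
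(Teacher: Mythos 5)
Your proof is correct and follows essentially the same route as the paper's: contract everything to $G_{g,n}$, observe that this sends any $(G,P)$ to $(G_{g,n},0)$ and any $(G,P,s)$ to $(G_{g,n},0,s_0)$ or $(G_{g,n},0,s_1)$ according to parity, use Lemma~\ref{SGP} to produce odd spin structures when $g>0$ (same weighted/weightless dichotomy), and reduce the rank statement to the triviality of the fibers over $\Sgn$. You are somewhat more explicit than the paper about the lifting condition in the definition of a quotient of posets --- the paper writes ``it is clear'' for the maps in part (c), whereas you correctly point out that for $\Spgn^-\to\Sgn$ one cannot get away with pushing $(G',0,s_0)$ around, and your uniform ``choose a preimage upstairs and push it forward along $\gamma_*$'' formulation handles all four maps at once; your closing caveat that $\gamma_*$ need not be surjective on the fibers $SP_{(G,P)}\to SP_{(G',P')}$ is also a correct observation and a useful warning against an ascending argument. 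These are refinements of exposition, not a different method.
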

\begin{proof}
 Let $G_{g,n}\in \Sgn$ be the graph introduced in Remark \ref{Gg}.
We have
 $\cC_{G_{g,n}}=\{0\}$; moreover, if $g>0$,    we have $SP_{G_{g,n}}=\{(0,s_0), (0,s_1)\}$  (so that $(0,s_0)$ is even and $ (0,s_1)$ is odd), and if $g=0$,   we have     $SP_{G_{g,n}}=\{(0,s_0)\}$. 
  
 Let $G\in \Sgn$ and let $\gamma\col G\to G_{g,n}=G/G$.
 For every $P\in \cC_G$ we have $\gamma_*P=0$, hence for every
 $(G,P)\in \Cgn$ we have $(G,P)\geq  (G_{g,n},0)$. This implies that $\Cgn$ is connected.
 To prove that the forgetful map is a quotient it suffices to observe that if $G\geq G'$
 then $(G,0)\geq (G',0)$.
 
 Let now $(G,P,s)\in \Spgn$. By definition
 $$
\gamma_*(P,s)=    \begin{cases}  (0,s_0),   & \text{ if } (P,s) \text{ is even}; \\
 (0,s_1),   & \text{ if } (P,s) \text{ is odd}.\
 \end{cases}
$$
Therefore every even spin graph $(G,P,s)$ satisfies $(G,P,s) \geq(G_{g,n},0,s_0)$, hence $\Spgn^+$ is connected;
 similarly for  $\Spgn^-$, when non-empty.

Let us show show that if $g>0$, then $SP^-_G$ is not empty for every $G\in \Sgn$. This is clear if $G$ is not weightless.
If $G$ is weightless then $b_1(G)=g>0$,  hence there exists $P\in \cC_G$ such that $b_1(P)>0$, hence
$G/P$ is not weightless. Therefore  $SP_{(G,P)}^-$ is not empty.

It is clear that  the forgetful maps in \eqref{pospinc} are quotients. 

We know that the map $G\mapsto |E|$ is a rank on $\Sgn$.
The partial order on the fibers of $\Cgn\to \Sgn$ is trivial, hence the rank on $\Sgn$ lifts to a rank on $\Cgn$, proving \eqref{pospind} for $\Cgn$. The proof for $\Spgn$ is the same.
\end{proof}
Consider now $\Aut(G)$,  the automorphism group of   $G$. 
By Definition~\ref{defcon}, if $\alpha \in \Aut(G)$ and $(P,s)\in SP_G$ then $\alpha$ maps $P$ to $\alpha_*P$ and $(P,s)$ to $\alpha_*(P,s)$,
which is also an element of $SP_G$.
We set
\[
 \Aut(G,P,s):=\{\alpha\in \Aut(G):\  \alpha_*(P,s)=(P,s)\}.
\]
We need to take into account the  action of $\Aut(G)$ on $\cC_G$ and on  $SP_G$. Set
\begin{equation}
 \label{[Spgn]}
  [\Cgn]:=\bigsqcup_{G\in \Sgn}\cC_G/\Aut(G),\quad  \quad [\Spgn]:=\bigsqcup_{G\in \Sgn}SP_G/\Aut(G).
\end{equation}
 Recall that an automorphism of $G$   preserves the weights.
 Hence $\Aut(G)$   leaves the sets $SP^+_G$ and $SP^-_G$ invariant and we write
 $[\Spgn^+]$ and $[\Spgn^-]$ so that  $ [\Spgn]=[\Spgn^+]\sqcup[\Spgn^-]$.
 
 In what follows the isomorphism class of an object will be denoted using square brackets. For example we write $[P,s]\in SP_G/\Aut(G)$ for the class of $(P,s)$ and $[G,P,s]\in [\Spgn]$ for the class of $(G,P,s)$.
 
 It is clear that the partial orders defined on $\Cgn$ and on $\Spgn$ descend on   $[\Cgn]$ and $[\Spgn]$ (since two spin graphs in the same class are not comparable).
So we shall use the notation
 $$[G,P,s]\geq [G',P',s']$$
  to mean that
 there exist $(G,P,s)\in [G,P,s]$ and $(G,P',s')\in [G,P',s']$ such that $(G,P,s)\geq (G',P',s')$.
Similarly for $[\Cgn]$.
 The forgetful maps below are, again, quotients of posets, the first three for every $g$, the last  for $g>0$
 $$
 [\Cgn]\to \Sgn, \quad [\Spgn^+]\to [\Cgn], \quad [\Spgn^+]\to \Sgn,    \quad [\Spgn^-]\to \Sgn.
$$
\begin{remark}
\label{pospinrk}
 Proposition~\ref{pospin} holds if we replace $\Cgn$  and $\Spgn$  by  $[\Cgn]$ and $[\Spgn]$  (mutatis mutandis). 
\end{remark}
 
 \subsection{Tropical curves and their spin structures}
 \label{subtrop}
We recall the definition of a tropical curve, and some properties of the associated moduli spaces; see \cite{BMV} and \cite{ACP15} for details and references.

A  {\it ($n$-pointed) tropical curve} is a pair $\Gamma=(G,\ell)$ where $G$  is a graph (with $n$ legs) and $\ell\col E\to \RR_{\geq 0}$ is a length function.
 We need to consider the more general case of an {\it extended tropical curve},
 which is defined as above with the difference that $\ell\col E\to \RR_{\geq 0}\cup\{\infty\}$. To simplify the notation we set
 $\ov{\RR}_{\geq 0}=\RR_{\geq 0}\cup\{\infty\}$, endowed with the compact one-point topology. 
 We say that $\Gamma$ is {\it stable} if so is $G$ and that $\Gamma$ is {\it pure} if $G$ is weightless. The \emph{genus} of $\Gamma$ is the genus of $G$.

 The moduli space   of stable $n$-pointed tropical curves of genus $g$ is denoted by $\Mgnt$, and the moduli space of extended ones by $\Mgtnb$.
The first is  a generalized cone complex, the second an extended generalized cone complex.
 They have both dimension $3g-3+n$ and $\Mgnt$ is   open  and dense in $\Mgntb$.
For any   $G\in \Sgn$ we write
 $$
 \Mt_G:=\{[\Gamma] \in \Mgnt:   \Gamma=(G,\ell)\},\quad   \ov{M}_G^{\trop}:=\{[\Gamma] \in \Mgntb: \Gamma=(G,\ell)\}.
 $$

 The spaces $ \Mgnt$ and $ \Mgntb$ are constructed as colimits of suitable diagrams of cone complexes built from the poset $\Sgn$. If for every graph $G$ we denote by $\sigma_G=\mathbb R_{\ge0}^{E}$, $\ov{\sigma}_G=\ov{\mathbb R}_{\ge 0}$, and by $\sigma^o_G$,  $\ov{\sigma}^o_G$ their interiors, then we have  {\it graded stratifications}
 $$
  \Mgnt=\bigsqcup_{G\in \Sgn}  \Mt_G\cong\bigsqcup_{G\in\Sgn} \sigma^o_G/\Aut(G),
$$
$$
 \Mgntb=\bigsqcup_{G\in \Sgn}  \ov{M}_G^{\trop}\cong\bigsqcup_{G\in \Sgn}\ov{\sigma}^o_G/\Aut(G).
 $$
The terminology ``graded stratification" means the folowing. The closure  of a stratum  is a union of strata, and the stratum corresponding to $G$
 contains the stratum  corresponing to $G'$ if and only if $G\geq G'$.
 Moreover, the map associating to each stratum its dimension is a rank on the poset of strata, i.e. on $\Sgn$; see \cite[Def. 1.3.2]{CC18}.

Before defining spin tropical curves, let us take a brief detour. A divisor  on a tropical curve $\Gamma$ is a formal sum $D=\sum_{p\in \Gamma}D(p)p$, where $D(p)\in\mathbb Z$ is non-zero only for a finite number of points $p\in \Gamma$.  A  rational function  on $\Gamma$ is a continuous piecewise-linear function on $\Gamma$ with integer slopes; a  principal divisor on $\Gamma$ is the divisor associated to a principal function. Two divisors are  equivalent  if their difference is a principal divisor. The  canonical divisor,  $K_\Gamma$, of $\Gamma$ is the canonical divisor of the underlying graph $G$, seen as a divisor on $\Gamma$.
A \emph{theta characteristic} on $\Gamma$ is a divisor $D$ such that $2D-K_\Gamma$ is equivalent to a principal divisor on $\Gamma$. 
Theta characteristics on pure tropical curves have been studied by Zharkov, and 
they correspond to elements of the cycle space $\cC_G$ of $G$
(see \cite[Thm. 7]{zharkov}), so a pure tropical curve of genus $g$ admits exactly $2^{b_1(G)}=2^g$ theta characteristics.

Let us define the theta characteristics  on any   tropical curve of genus $g$.
Given an edge $e$ of $G$, we   denote by $p_e\in \Gamma$ the mid-point of $e$. 
For $P\in\cC_G$, let   $D^P$ be the following divisor  
\[
D^P(x)=
\begin{cases}
\underline d^P_x, \text{ if } x \in V;\\
1, \text{ if } x=p_e \text{ for }e\in E\smallsetminus P;\\
0, \text{ otherwise }
\end{cases}
\]
where $\md^P$ is the divisor defined in \eqref{E:tcg}.
It is easy to check that $D^P$ is a theta characteristic on $\Gamma$, and that there are $2^{b_1(G)}$ of them.
Moreover, if $w=0$, one can show that $D^P$ is equivalent to the divisor $\ma K_P$ 
described  in \cite{zharkov}.
In particular, we have

\begin{remark}
The divisors $D^P$ associated to the elements $P$ in $\cC_G$ are (non-equivalent) representatives for the $2^{b_1(G)}$ theta characteristics of $\Gamma$.
\end{remark}

We will not include the proof  as it is not necessary here. 
Our 
  definition of  spin tropical curves  can be seen as an enriched version of   tropical theta characteristics.

\begin{defi}
 A  {\it   (extended) spin tropical curve} 
 is a triple $\Psi=(\Gamma, P,s)$ where $\Gamma=(G,\ell)$ is a (extended)  tropical curve
 and $(P,s)$
  is a spin structure on $G$.  
 The spin tropical curve $\Psi$  is \emph{stable} if so is $G$. 
  
 The genus of    $\Psi$ is the genus of $\Gamma$, and  its  parity is the parity of $(P,s)$.
  \end{defi}
 
Let $\Psi=(\Gamma, P,s)$ and $\Psi'=(\Gamma', P',s')$ be two spin curves, with $\Gamma=(G,\ell)$ and $\Gamma'=(G',\ell')$. We say that $\Psi$ and $\Psi'$ are \emph{isomorphic} if there is an isomorphism of tropical curves $\Gamma\ra \Gamma'$  whose induced graph isomorphism  $\gamma\col G\ra G'$  satisfies $\gamma_*(P,s)=(P',s')$.

 \subsection{The moduli space of spin tropical curves}
 
We denote by $\Sgnt$ (respectively, $\Sgntb$) the set of isomorphism classes of spin tropical curves (respectively, extended spin tropical curves). We are going to give  them  the structure of extended generalized cone complexes.

Let $\Gamma=(G,\ell)$ be an extended tropical curve. Recall that $SP_G$ denotes the set of spin structures on $G$.
The set of   spin structures on $\Gamma$  is written
$$
S^{\trop}_{\Gamma}:=\{\Psi=(\Gamma, P,s),  \;  \forall (P,s)\in SP_G\}.
$$
Thus $S^{\trop}_{\Gamma}$ is  a finite set,   partitioned according to the parity of a spin structure as follows:
 $
S^{\trop}_{\Gamma}=(S^{\trop }_{\Gamma})^+\sqcup (S^{\trop}_{\Gamma})^- .
$
Notice that $\Aut(\Gamma)$ may act non-trivially  
 on $S^{\trop}_{\Gamma}$.
 
Next, for a stable spin graph $(G,P,s)\in \Spgn$ we consider the subsets
\[
 S^{\trop}_{(G,P,s)}\subset\Sgnt \quad \text{ and } \quad \ov{S}^{\trop}_{(G,P,s)}\subset \Sgntb
\] 
defined as the set 
of isomorphism classes, respectively, of spin tropical curves and of extended spin tropical curves, whose underlying spin graph is isomorphic to $(G,P,s)$. These sets will define a     stratification of $\Sgnt$ and $\Sgntb$.

We construct $\Sgnt$ and $\Sgntb$ as generalized cone complexes using a procedure analogous to, and compatible with,
 the one used to construct $\Mgnt$ and $\Mgntb$.

We consider the category, {\sc{spin}}$_{g,n}$, whose objects are isomorphism classes of stable spin graphs of genus $g$ with $n$ legs and whose morphisms are generated by contractions and by automorphisms of spin graphs. To an isomorphism class of a spin graph $(G,P,s)$ we associate a cone
$$
\sigma_{_{(G,P,s)}}=\RR_{\geq 0}^{E}.
$$
This cone has the natural integral structure determined by the sub-lattice parametrizing tropical curves whose edges have integral length.
We write $\sigma_{_{(G,P,s)}}^o=\RR_{> 0}^{E}$ for its  interior.

To a contraction  $\gamma\col (G,P,s)\to (G',P',s')$ we associate an injection of cones 
$$
\iota_{\gamma}\col \sigma_{_{(G',P',s')}}\ha \sigma_{_{(G,P,s)}}
$$
whose image is the face of $\sigma_{_{(G,P,s)}}$ where the coordinates corresponding to $E(G)\smallsetminus E(G')$ are zero.
If $\gamma$ is an automorphism, then $\iota_{\gamma}$ is the corresponding   automorphism of rational cones.
By our earlier results, this gives a contravariant functor from {\sc{spin}}$_{g,n}$ to the category of rational polyhedral cones.
Therefore we can take the colimit of the diagram of cones $\sigma_{_{(G,P,s)}}$ using the inclusions $\iota_{\gamma}$,
for all morphisms $\gamma$ in {\sc{spin}}$_{g,n}$.
We define this colimit to be the moduli space of $n$-pointed spin  tropical curves of genus $g$:
\[
  \Sgnt = \varinjlim \left(\sigma_{_{(G,P,s)}}, \iota_{\gamma}\right).
\]
 Hence $\Sgnt$ is canonically a generalized cone complex, and the following  is easily seen to hold.
\begin{prop}
\label{Sgtprop}
 The moduli space of spin tropical curves, $\Sgnt$,
 is a topological space of  pure dimension $3g-3+n$.
 We have a stratification
 $$
\Sgnt=\bigsqcup_{[G,P,s]\in [\Spgn]} S^{\trop}_{(G,P,s)}.
$$
Moreover, we have
\[
  S^{\trop}_{(G,P,s)}\cong\sigma_{_{(G,P,s)}}^o/ \Aut(G,P,s),
\]
and $ S^{\trop}_{(G',P',s')}\subset \overline{S ^{\trop}_{(G,P,s)}}$ if and only if $[G,P,s]\geq [G',P',s']$.
\end{prop}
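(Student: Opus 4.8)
The plan is to carry over, almost verbatim, the construction and basic analysis of $\Mgnt$ from \cite{ACP15}, with the poset $\Sgn$ replaced by $[\Spgn]$ and the category of stable graphs by {\sc{spin}}$_{g,n}$; Lemmas~\ref{funct} and~\ref{ontoS}, together with Proposition~\ref{pospin} and Remark~\ref{pospinrk}, are precisely the inputs that make this transport legitimate. First I would recall the general principle governing a colimit of rational polyhedral cones along the face inclusions $\iota_\gamma$ attached to the morphisms of such a category: the colimit is a topological space whose points are in canonical bijection with the disjoint union, over the objects $[G,P,s]$, of the quotients $\sigma^o_{(G,P,s)}/\Aut(G,P,s)$, and the closure of the cell indexed by $[G,P,s]$ is the union of the cells indexed by the $[G',P',s']\leq [G,P,s]$. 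A point of $\sigma^o_{(G,P,s)}/\Aut(G,P,s)$ is a length function $\ell\col E(G)\to\RR_{>0}$ modulo the action of $\Aut(G,P,s)$; by the very definition of an isomorphism of spin tropical curves this is exactly an element of $S^{\trop}_{(G,P,s)}$. This gives at once the stratification $\Sgnt=\bigsqcup_{[G,P,s]\in[\Spgn]}S^{\trop}_{(G,P,s)}$ and the identification $S^{\trop}_{(G,P,s)}\cong\sigma^o_{(G,P,s)}/\Aut(G,P,s)$.

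From the same colimit description, $S^{\trop}_{(G',P',s')}\subset\overline{S^{\trop}_{(G,P,s)}}$ if and only if $\sigma_{(G',P',s')}$ is identified, through the diagram, with a face of $\sigma_{(G,P,s)}$, if and only if there is a contraction $(G,P,s)\to(G',P',s')$ in {\sc{spin}}$_{g,n}$, which by definition means $[G,P,s]\geq [G',P',s']$ in $[\Spgn]$. The only point requiring care is that the diagram creates no spurious face identifications, i.e.\ that the maps $\iota_\gamma$ are injective and compose correctly; this is exactly what Lemmas~\ref{funct} and~\ref{ontoS} provide, just as in the graph case treated in \cite{ACP15}.

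It remains to prove pure dimensionality. We have $\dim\sigma_{(G,P,s)}=|E(G)|$, and by Proposition~\ref{pospin}(d) together with Remark~\ref{pospinrk} the assignment $(G,P,s)\mapsto|E(G)|$ is a rank function on $[\Spgn]$. A stable graph of genus $g$ with $n$ legs has at most $3g-3+n$ edges, with equality exactly for the $3$-regular graphs, so it suffices to check that (i) every spin graph with $3$-regular underlying graph is maximal in $[\Spgn]$, and (ii) every spin graph $(G,P,s)$ is dominated by one whose underlying graph is $3$-regular. For (i): if $G$ is $3$-regular it is maximal in $\Sgn$, so no graph contraction, hence no contraction of spin graphs, can have target $(G,P,s)$. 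Claim (ii) amounts to lifting $(P,s)$, along a contraction $\hat G\to G$ with $\hat G$ $3$-regular, to a spin structure on $\hat G$; since $\hat G\to G$ factors through single-edge contractions it is enough to lift along each of these. This is the genuinely new ingredient, and the subtlety here is the main obstacle of the proof: one cannot lift the cycle $P$ arbitrarily. If a positive-weight vertex $v$ of $G/P$ carries sign $s(v)=1$ and the corresponding piece becomes weightless after un-contracting an edge, then the lift $\hat P$ must be chosen so as to retain a cycle inside that piece, so that the lifted sign function still has a positive-weight vertex on which to take the value $1$; since the extra weight of $v$ is a $b_1$-contribution of the un-contracted piece, such a cycle is always available, and one can thread this choice through a single-edge un-contraction (a vertex split, or the replacement of a positive-weight vertex by a vertex of weight one less carrying a loop). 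Iterating up to a $3$-regular graph proves (ii). Together with the rank function, (i) and (ii) show that every cone of $\Sgnt$ is a face of a cone of dimension $3g-3+n$ and that no cone has larger dimension, so $\Sgnt$ has pure dimension $3g-3+n$.
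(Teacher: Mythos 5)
Your proposal is correct and follows essentially the same route as the paper, which presents this proposition as an immediate consequence of the colimit construction modeled on \cite{ACP15} and gives no further argument. The only step requiring genuine work, purity, is exactly the one you isolate (lifting a spin structure along un-contractions up to a $3$-regular graph), and your sketch does go through, provided one also checks stability of the chosen vertex splits and, in the delicate case of a weightless vertex all of whose (four) incident edges lie in $P$, chooses the split that keeps the two cycle-edges together so that a cycle survives over each vertex carrying sign $1$.
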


One constructs the moduli space for extended spin tropical  curves in  the analogous way.
To an isomorphism class of a spin graph $(G,P,s)$ we now associate the extended cone
$$
\ov{\sigma}_{(G,P,s)}:=\ov{\mathbb R}^{E}_{\ge0} 
$$
and its interior $\ov{\sigma}_{_{(G,P,s)}}^o=({\RR}_{>0}\cup \{\infty\})^{E}$.
The rest  of the construction is the same, and yields
  the moduli space of extended spin tropical curves
\[
  \Sgntb = \varinjlim \left(\ov{\sigma}_{(G,P,s)}, \iota_{\gamma}\right),
\]
whose stratification we denote
\[
\Sgntb=\bigsqcup _{[G,P,s]\in [\Spgn]} \ov{S}^{\trop}_{(G,P,s)}\cong\bigsqcup _{[G,P,s]\in [\Spgn]}\ov{\sigma}_{_{(G,P,s)}}^o/ \Aut(G,P,s).
\]

\begin{prop}
\label{Sgtbprop}
 The moduli space of extended spin tropical curves,
 $\Sgntb$, is a generalized extended cone complex, and a  topological space of pure dimension $3g-3+n$ containing $\Sgnt$ as a  dense open subset.
It has two connected components:
$$
\Sgntb=\Sgntbo \sqcup \Sgntbe
$$
corresponding to odd and even spin tropical curves.
\end{prop}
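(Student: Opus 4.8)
The plan is to deduce Proposition~\ref{Sgtbprop} from the already-established construction of $\Sgntb$ as a colimit and from the structural facts about the poset $[\Spgn]$ recorded in Proposition~\ref{pospin} and Remark~\ref{pospinrk}. First I would address the dimension and density claims: since $\Sgntb = \varinjlim(\ov{\sigma}_{(G,P,s)},\iota_\gamma)$ with each $\ov{\sigma}_{(G,P,s)} = \ov{\RR}^{E}_{\ge 0}$, the maximal cones are those attached to spin graphs $(G,P,s)$ with $G$ of maximal number of edges, i.e.\ $3$-regular; by the fact recalled in Subsection~\ref{subsec:graph} such a $G$ has exactly $3g-3+n$ edges, so $\dim\ov{\sigma}_{(G,P,s)} = 3g-3+n$. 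Purity then follows because every cone $\ov{\sigma}_{(G,P,s)}$ is a face of a maximal one: given any stable spin graph, one repeatedly ``uncontracts'' edges (splitting higher-weight or higher-valence vertices) while carrying the spin structure along via Lemma~\ref{ontoS}, until reaching a $3$-regular graph dominating it — here one uses that a spin structure $(P,s)$ always extends compatibly under such refinements, as the sign function lives on $V(G/P)$ and uncontracting edges not in $P$ does not change $G/P$. Density of $\Sgnt$ in $\Sgntb$ is immediate from the stratification, since $\sigma^o_{(G,P,s)}$ is dense in $\ov{\sigma}^o_{(G,P,s)}$ and these interiors cover the two spaces compatibly with the colimit maps.

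The substantive point is the count of connected components. The strata $\ov{S}^{\trop}_{(G,P,s)}\cong \ov{\sigma}^o_{(G,P,s)}/\Aut(G,P,s)$ are each connected, and by the last sentence of Proposition~\ref{Sgtprop} (and its extended analogue, which holds verbatim for $\Sgntb$) the incidence of closures of strata is governed exactly by the partial order on $[\Spgn]$. Hence the connected components of the topological space $\Sgntb$ are in bijection with the connected components of the poset $[\Spgn]$. By Proposition~\ref{pospin}\eqref{pospinb} together with Remark~\ref{pospinrk}, for $g>0$ the poset $[\Spgn]$ has exactly two connected components, $[\Spgn^+]$ and $[\Spgn^-]$, distinguished by the parity of the spin structure; the parity is locally constant since a contraction of spin graphs preserves parity (Lemma~\ref{ontoS}\eqref{ontoSa}) and an isomorphism obviously does. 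Therefore
\[
\Sgntb = \Sgntbe \sqcup \Sgntbo
\]
with $\Sgntbe = \bigsqcup_{[G,P,s]\in[\Spgn^+]}\ov{S}^{\trop}_{(G,P,s)}$ and $\Sgntbo$ the analogous union over $[\Spgn^-]$, and each of the two pieces is connected because its indexing poset is connected — concretely, every even (resp.\ odd) stratum specializes down to the vertex cone attached to $(G_{g,n},0,s_0)$ (resp.\ $(G_{g,n},0,s_1)$), so the closures of all even (resp.\ odd) strata share a common point.

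That $\Sgntb$ is a generalized extended cone complex is part of its very construction as a colimit of the extended cones $\ov{\sigma}_{(G,P,s)}$ over the category {\sc spin}$_{g,n}$, exactly paralleling the construction of $\Mgntb$ in \cite{ACP15}; no new argument is needed beyond citing that the functoriality established before the statement makes the diagram admissible. The one place to be careful — and what I expect to be the main obstacle — is making the ``uncontraction'' argument for purity fully rigorous: one must check that \emph{every} stable spin graph $(G,P,s)$ admits a contraction from a $3$-regular spin graph, i.e.\ that the refinement process can always be carried out while keeping stability of the graph and legality of the sign function (in particular that new weight-$0$ vertices created when splitting a positive-weight vertex receive sign $0$, which is automatic, and that vertices of $G/P$ are not inadvertently merged or split in a way that conflicts with $s$). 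This is a purely combinatorial verification, but it is the only step that is not a direct quotation of an earlier result, so I would state it as a short lemma: every $(G,P,s)\in\Spgn$ satisfies $[G,P,s]\le [G',P',s']$ for some $3$-regular $G'$, proved by induction on $3g-3+n-|E(G)|$ by splitting a single vertex.
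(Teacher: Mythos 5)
Your proposal is correct and takes essentially the same route as the paper: the paper's own proof consists only of the remark that, beyond what the colimit construction already provides (extended cone-complex structure, pure dimension $3g-3+n$, density of $\Sgnt$), one must check that $\Sgntbe$ and $\Sgntbo$ are connected, and it does so by exactly your poset argument, namely that every even (resp.\ odd) stratum has the vertex cone of $(G_{g,n},0,s_0)$ (resp.\ $(G_{g,n},0,s_1)$) in its closure, as in the proof of Proposition~\ref{pospin}. Your additional lemma for purity (every stable spin graph is dominated by one with $3$-regular underlying graph, choosing $P'$ with $\gamma_*P'=P$ and adding cycles over positive-weight vertices where the sign is $1$) is true and provable, but the paper treats this point as implicit in the construction rather than proving it.
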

 
\begin{proof}
With respect to what we already said, 
the only thing that needs to be proved is  that $\Sgntbo$ and $ \Sgntbe$ are connected.
This follows by  the same argument we used to prove Proposition \ref{pospin} \eqref{pospinb}.
\end{proof}


 We have a natural map 
\[
\pi^{\trop}\col   \Sgntb\la  \Mgtnb
\]
sending the point parametrizing a  spin tropical curve $(\Gamma,P,s)$, with 
$\Gamma=(G,\ell)$, 
  to the point parametrizing the   tropical curve $(G,\pi^{\trop}(\ell))$, where 
\[
\pi^{\trop} (\ell)(e):=
\begin{cases}
\begin{array}{ll}
2\ell(e), & \text{if } e\in E\smallsetminus P;\\
\ell(e),  & \text{otherwise}.
\end{array}
\end{cases}
\]
We consider also the restrictions (with self-explanatory notation)
\[
    \pi^{\trop,-}\col\Sgntbo\la \Mgtnb \quad \text{ and } \quad \pi^{\trop,+}\col\Sgntbe\la\Mgtnb.
\]

Next we prove that $\pi^{\trop}$, and hence $\pi^{\trop,-}$ and $\pi^{\trop,+}$, are  morphisms of extended generalized cone complexes in the sense of \cite[Sect.  2]{ACP15}.

\begin{proposition}\label{prop:pitrop}
The map $\pi^{\trop}$ is a morphism of extended generalized cone complexes and for every extended tropical curve $\Gamma$ we have
\[
(\pi^{\trop})^{-1}([\Gamma])\cong S^{\trop}_\Gamma/\Aut(\Gamma).
\]
\end{proposition}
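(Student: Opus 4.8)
The plan is to verify the two claims of the proposition separately: first that $\pi^{\trop}$ is a morphism of extended generalized cone complexes, and then that its fibers are as stated. Both reduce to bookkeeping about the cones $\sigma_{(G,P,s)}$, $\ov\sigma_{(G,P,s)}$ and the rescaling map $\pi^{\trop}(\ell)$, using the functorial properties of contractions of spin graphs established in Lemmas~\ref{funct} and~\ref{ontoS} together with the colimit descriptions of $\Sgntb$ and $\Mgtnb$.

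For the first claim, recall that by definition (\cite[Sect.~2]{ACP15}) a morphism of extended generalized cone complexes is induced by a compatible family of morphisms on the cones appearing in the defining colimit diagrams. So I would fix a stable spin graph $(G,P,s)$ with underlying stable graph $G$, and describe the linear map on cones
\[
\ov\sigma_{(G,P,s)}=\ov\RR_{\ge0}^{E(G)}\la \ov\sigma_G=\ov\RR_{\ge0}^{E(G)}
\]
that sends $\ell$ to $\pi^{\trop}(\ell)$: on the coordinate $e$ it is multiplication by $2$ if $e\in E\smallsetminus P$ and the identity if $e\in P$. This is visibly an integral-linear (hence cone) morphism, and it restricts correctly to interiors. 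The point to check is compatibility with the two kinds of morphisms generating the categories. For a contraction $\gamma\col (G,P,s)\to (G',P',s')$, by Lemma~\ref{funct}(d) we have $P'=\gamma_*P=P\smallsetminus F$ where $F=E(G)\smallsetminus E(G')$, so an edge $e\in E(G')$ lies in $P'$ exactly when it lies in $P$; hence the rescaling factor attached to $e$ is the same whether computed in $(G,P,s)$ or in $(G',P',s')$, and the square
\[
\xymatrix@=.6pc{
\ov\sigma_{(G',P',s')}\ar[rr]^{\iota_\gamma}\ar[dd] && \ov\sigma_{(G,P,s)}\ar[dd]\\ \\
\ov\sigma_{G'}\ar[rr]^{\iota_{\gamma'}} && \ov\sigma_G
}
\]
commutes, where $\gamma'\col G\to G'$ is the underlying graph contraction. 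For an automorphism $\alpha\in\Aut(G,P,s)$, since $\alpha$ preserves $P$ setwise it permutes the coordinates of $\ov\sigma_{(G,P,s)}$ in a way that respects the partition $E=P\sqcup(E\smallsetminus P)$, so it commutes with the diagonal rescaling; and its image in $\Aut(G)$ is the corresponding automorphism of $\ov\sigma_G$. Thus the family is compatible and passes to the colimit, giving the morphism $\pi^{\trop}\col\Sgntb\to\Mgtnb$ already defined set-theoretically; the same works verbatim for the non-extended spaces and for the restrictions to the two components. I would also note the map on strata: $\pi^{\trop}$ sends $\ov S^{\trop}_{(G,P,s)}$ into $\ov M^{\trop}_G$.

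For the fiber statement, fix an extended tropical curve $\Gamma=(G,\ell)$ (we may assume $G$ stable, otherwise both sides are empty) and consider its class $[\Gamma]\in\Mgtnb$. A point of $(\pi^{\trop})^{-1}([\Gamma])$ is represented by a spin tropical curve $(\Gamma',P,s)$ with $\Gamma'=(G,\ell')$ and $\pi^{\trop}(\ell')$ giving a tropical curve isomorphic to $\Gamma$; unwinding, there is a graph automorphism $\alpha\in\Aut(G)$ with $\pi^{\trop}(\ell')=\ell\circ\alpha$ (as length functions, up to the $\Aut(G)$-identification). Because the rescaling map $\ell'\mapsto\pi^{\trop}(\ell')$ on $\ov\RR_{\ge0}^{E}$ is a coordinatewise bijection (multiplication by $2$ on the $E\smallsetminus P$ coordinates is a bijection of $\ov\RR_{\ge0}$, being the identity at $\infty$), the length $\ell'$ is uniquely determined by $\ell$ and the choice of $P$: once a spin structure $(P,s)\in SP_G$ is fixed, $\ell'$ is recovered from $\ell$ by halving the lengths of the edges not in $P$. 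Hence, before quotienting, the set of spin tropical curves mapping to $(G,\ell)$ is naturally identified with $S^{\trop}_{(G,\ell)}=S^{\trop}_\Gamma$, the set of spin structures on $G$ transported to $\Gamma$. Passing to isomorphism classes, two such points are identified precisely when related by an element of $\Aut(\Gamma)$ acting on the spin structures — this uses that $\Aut(\Gamma)$ (which for a generic length function equals $\Aut(G)$, but in general is the subgroup of $\Aut(G)$ fixing $\ell$) acts on $S^{\trop}_\Gamma$ as described in Subsection~\ref{subtrop}, and that an isomorphism of spin tropical curves is exactly such an automorphism compatible with $(P,s)$. Therefore $(\pi^{\trop})^{-1}([\Gamma])\cong S^{\trop}_\Gamma/\Aut(\Gamma)$, as claimed.

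\textbf{Main obstacle.} The conceptual content is light; the delicate point is the second claim, where one must be careful about the difference between automorphisms of the abstract graph $G$ and automorphisms of the tropical curve $\Gamma=(G,\ell)$ (the latter being the $\ell$-preserving subgroup), and argue that a point of the colimit $\Sgntb$ lying in the fiber is the same datum as an $\Aut(\Gamma)$-orbit of spin structures on $G$ — i.e.\ that the quotient appearing in Proposition~\ref{Sgtbprop}'s stratification of $\Sgntb$ restricts correctly along the fiber. Making this identification precise, rather than the routine verification that $\pi^{\trop}$ is linear on cones, is where the care is needed; everything else follows from Lemmas~\ref{funct} and~\ref{ontoS} and the constructions already in place.
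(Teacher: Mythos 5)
Your proposal is correct and follows essentially the same route as the paper's proof: the same coordinatewise integral-linear map (multiplication by $2$ on the $E\smallsetminus P$ coordinates, identity on $P$) checked against the cone diagrams, and the same identification of $(\pi^{\trop})^{-1}([\Gamma])$ with the image of the ``halving'' map $S^{\trop}_\Gamma\to\Sgntb$, whose fibers are exactly the $\Aut(\Gamma)$-orbits. One harmless slip in your parenthetical: for a \emph{generic} length function $\Aut(\Gamma)$ is typically a proper subgroup of $\Aut(G)$ (it coincides with $\Aut(G)$ when, say, all lengths are equal), but since your argument only uses the correct description of $\Aut(\Gamma)$ as the $\ell$-preserving subgroup of $\Aut(G)$, this does not affect the proof.
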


\begin{proof}
Obviously $\pi^{\trop}$ is   compatible with the cone diagrams defining  $\Mgtnb$ and  $\Sgntb$, and
  the induced map from $\sigma_{(G,P,s)}$ to $\sigma_G$ is a morphism of rational polyhedral cones with integral structure
(it is the restriction of the integral linear transformation $T\col\mathbb R^{E}\ra\mathbb R^{E}$ defined on the basis $\{e : e\in E\}$  as $T(e)=2e$ if $e\in E\smallsetminus P$, and $T(e)=e$ if $e\in P$). Hence $\pi^{\trop}$ is a map of extended generalized cone complexes.

Let $\Gamma=(G,\ell)$ be an extended tropical curve. 
There is a natural map $\rho\col S^{\trop}_\Gamma\ra \ol{S}_{g,n}^{\trop}$ taking $(\Gamma,P,s)$, with $(P,s)\in SP_G$, to the class of $(\Gamma_P,P,s)$, where $\Gamma_P=(G,\ell_P)$, $\ell_P(e)=\ell(e)/2$ for $e\in E\setminus P$, and $\ell_P(e)=\ell(e)$ for $e\in P$. We have $(\pi^{\trop})^{-1}([\Gamma])=\text{Im}(\rho)$. Given $(P,s)$ and $(P',s')$ in $SP_G$, we have $\rho(\Gamma,P,s)=\rho(\Gamma,P',s')$ if and only if there is an automorphism of $\Gamma$ inducing an isomorphism between $(\Gamma,P,s)$ and $(\Gamma,P',s')$. 
 Therefore $\text{Im}(\rho)\cong S^{\trop}_\Gamma/\Aut (\Gamma)$.
\end{proof}

\section{Algebraic stable spin curves}\label{sec:algspin}
 
  \subsection{Stable spin curves and their moduli space}
  \label{subspin}
We let  $\Sgnbst$ be the moduli stack of stable $n$-pointed spin curves of genus $g$ and  $\ma S_{g,n}\subset \Sgnbst$ be the substack parametrizing theta characteristics on smooth curves.  
Given a $k$-scheme $B$, a section in $\Sgnbst(B)$ is a pair  $(\wh{\ma X}, \wh{\ma L})$, described as follows. We have a  flat morphism $f\col \ma{\wh X}\ra B$  
  such that for every  $b\in B$ the fiber $\wh{ X}_b$ is a genus-$g$
 quasistable $n$-pointed  curve. Next,  $\wh{\ma L}$ is a line bundle on $\wh{\ma X}$  
endowed with a homomorphism $\alpha\col \wh{\ma L}^{\otimes 2}\ra \omega_f$ such that for every     $b\in B$ the map $\alpha _{|\wh {  X}_b}\col \wh{\ma L} _{|\wh{X}_b}^{\otimes 2}\ra \omega_{\widehat{ X_b}}$ is an isomorphism away from the  exceptional components,  and  $\wh{\ma L}_{|E}\cong \mathcal O_E(1)$ for every exceptional component $E$ of $\wh{  X}_b$.

We have a natural, representable morphism 
$$
\pi\col \Sgnbst\la \Mgnbst
$$
sending $(\wh{\ma X}, \wh{\ma L})$ to the stable model   of $\wh{\ma X}$.
The morphism $\pi$ is finite of degree $2^{2g}$,
hence the fiber of $\pi$ over a point parametrizing a   curve $X$, written  ${\cS}_X$,  has dimension $0$ and  length $2^{2g}$. 
With the notation \eqref{SZ0}, we have ${\cS}^0_X\subset {\cS}_X$ with equality if and ony if $X$ is nonsingular (see below).

Recall the notation in \eqref{eq:XtildeR}. A point in ${\cS}_X$ parametrizes the isomorphism class of a pair
  $(\hX_R, \hL_R)$, where $\hX_R$ is the quasistable      curve associated to a set of nodes $R$ of $X$ and $\hL_R$ is a line bundle on $\hX_R$ such that
\begin{enumerate}[(1)]
 \item 
the restriction,  $L _R$, of  $\wh L_R$ to $X^{\nu}_R$ satisfies $L_R^2\cong \omega_{X^{\nu}_R}$;
 \item
 the restriction of $\hL_R$  to $E_r$  is $\cO_{E_r}(1)$ for every $r\in R$.
   \end{enumerate}
    Notice that
   $(\hX_R, \hL_R)$ depends only on $L_R$ (i.e.  different gluings over the nodes lying on  exceptional components of $\hX_R$ give the same point in ${\cS}_X$).
 The first requirement   implies that the dual graph of ${X^{\nu}_R}$ is cyclic. Therefore we can alternatively
 describe    
the points   of ${\cS}_X$ as parametrizing
   pairs  $(R, L_R)$ defined as follows: 
 
\begin{enumerate}[(1)]
 \item
 $R\subset E$   such that $E\smallsetminus R$ is cyclic;
  \item
 $L_R\in \Pic(X^{\nu}_R)$  such that $L_R^2\cong \omega_{X^{\nu}_R}$.
 \end{enumerate}

We earlier used the notation  $(\hX_R, \hL_R)$  for a point in $\Sgnbst$, but
 we can, and will,      denote the same point    by a pair
$(X^{\nu}_R, L_R)$.

Now, $\Sgnbst$ has two   connected and irreducible  components, denoted
$$
\Sgnbsto\la \Mgnbst  \quad \quad  \text{and} \quad \quad \Sgnbste\la \Mgnbst
$$
parametrizing, respectively, odd and even spin curves, where the parity refers to the parity of $h^0(\hX_R, \hL_R)$
(equivalently, the parity of $h^0( X^{\nu}_R,  L_R)$)
for every $(\hX_R, \hL_R)$ parametrized by $\Sgnbst$. 

We   write, with self-explanatory notation,
${\cS}_X={\cS}_X^-\sqcup {\cS}_X^+$.
One refers to 
${\cS}_X$ as the space of (stable) spin structures   on $X$, with  ${\cS}_X^-$ and ${\cS}_X^+$ parametrizing odd and even ones.
\subsection{The dual graph of a stable spin curve}

Let $X$ be a stable   curve, $G$ its dual graph, and $R\subset E$ a set of nodes of $X$; write $P  =E\smallsetminus R$.
Then the connected components of $X^{\nu}_R$ are in natural bijection with the vertices of the graph
$G/P$. We thus write
$
 X^{\nu}_R=\sqcup_{v\in V(G/P)} Z_v
$
with $Z_v$ connected.  Recall, from \ref{sec:algcur}, that we view $ X^{\nu}_R$ as a pointed curve so that
 the dual graph of $Z_v$ is the graph  $\ov{P}_v$   defined in Section \ref{sec:spingraphs}, and 
 the weight of $v$ in $G/P$ is equal to the genus of $Z_v$.
\begin{defi} 
\label{dsg}
 The {\it dual spin graph} of a spin curve $(X^{\nu}_R, L_R)$
 is the spin graph $(G,P ,s )$ defined as follows:
\begin{itemize} 
\item  $G$ is the dual graph of $X$;
\item $P =E \smallsetminus R$;
\item $s(v)$ is the parity of $h^0(Z_v, (L_R)_{|Z_v})$,   for every $v\in V(G/P)$. 
\end{itemize}
\end{defi}

We need to check that we defined an actual  spin graph.
As we said above, if $(X^{\nu}_R, L_R)$ is a spin curve, then $E \smallsetminus R \in \cC_G$.
Next, if $v\in V(G/P)$ has weight zero, then $Z_v$ is smooth and has genus zero, and     
$(L_R)_{|Z_v}$ has degree $-1$, and so 
 $s(v)=h^0(Z_v, (L_R)_{|Z_v})=0$.

\begin{remark}\label{rem:chm}
A spin curve and its dual spin graph have the same parity.
Indeed, the parity of  $(X^{\nu}_R, L_R)$ is the parity of $h^0(X^{\nu}_R, L_R)$, and we have
$$
h^0(X^{\nu}_R, L_R)=\sum_{v\in V(G/P )} h^0(Z_v, (L_R)|_{Z_v}) \equiv \sum_{v\in V(G/P)}s (v) \  \text{ mod} (2).
$$ 
\end{remark}

\begin{prop}
\label{Spingraphprop}
We have a commutative diagram of surjective maps
  $$
\xymatrix@=.4pc{
&&&&\Sgnbst  \ar[dd]_{\pi}    \ar[rrrr]^{\psi}  &&&& [\Spgn]\ar[dd] &&&   &&&&  \\
\\
 &&&&\Mgnbst  \ar[rrrr] &&&&\Sgn
}
$$
where the two horizontal arrows  map 
 a stable (respectively  spin) curve to the class of its dual (respectively dual  spin) graph, and the right vertical arrow 
 maps $[G,P,s]$ to $G$. 
The same holds by replacing $\Sgnbst$ and $[\Spgn]$ with 
    $\Sgnbst^+$ and $[\Spgn^+]$ or,   if $g>0$, with $\Sgnbst^-$  and  $[\Spgn^-]$ .
\end{prop}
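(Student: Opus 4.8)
The plan is to dispose of commutativity and the three ``easy'' surjections first, and then to concentrate on the surjectivity of $\psi$, which is the only substantial point.

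Commutativity is immediate from the definitions: a point of $\Sgnbst$ is a spin curve $(X^\nu_R,L_R)$; its image under $\psi$ is the class of its dual spin graph $(G,P,s)$, whose underlying graph $G$ is the dual graph of $X=\pi(X^\nu_R,L_R)$, so both composites around the square return $G$. For the easy surjections: the bottom arrow $\Mgnbst\to\Sgn$ is surjective by the discussion in Subsection~\ref{sec:algcur}; the right arrow $[\Spgn]\to\Sgn$ is surjective because, by Proposition~\ref{pospin}\eqref{pospinc} together with Remark~\ref{pospinrk}, the forgetful maps $[\Spgn^{+}]\to\Sgn$ and (for $g>0$) $[\Spgn^{-}]\to\Sgn$ are quotients of posets — this also takes care of the right arrow in the two parity-refined diagrams; and $\pi$ is surjective since it is finite of degree $2^{2g}>0$ (alternatively, surjectivity of the bottom arrow follows from that of $\psi$ and of the right arrow, by commutativity).

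For the surjectivity of $\psi$ I would argue by a direct construction. Fix a stable spin graph $(G,P,s)$ and set $R:=E\smallsetminus P$. For each $v\in V(G/P)$ the component $\ov P_v$ of $\ov P$ is connected, stable (it is a connected component of $G-R^o$, cf. Subsection~\ref{subsec:graph}) and cyclic: legs do not contribute to degrees, so $\deg_{\ov P_v}(v')=\deg_{\langle P\rangle}(v')$ for every vertex $v'$, and this is even since $P\in\cC_G$. I would then produce, for each $v$, a pointed stable curve $(Z_v,\sigma_v)$ with dual graph $\ov P_v$ equipped with a theta characteristic $M_v$ on $Z_v$ of parity $s(v)$. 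When $w(v)=0$ the graph $\ov P_v$ is a single vertex with no edges (a connected graph of genus $0$ in which every vertex has even degree has no edges), so $Z_v\cong\PP^1$ and $M_v=\cO_{\PP^1}(-1)$, which is even, matching the requirement $s(v)=0$; when $w(v)\ge 1$, cyclicity of $\ov P_v$ guarantees that $Z_v$ carries theta characteristics (Subsections~\ref{sec:algcur} and \ref{subspin}), and, by the classical Riemann--Mumford parity relation, theta characteristics of both parities occur, so one of parity $s(v)$ can be chosen. Finally I would glue the $(Z_v,\sigma_v)$ along the pairs of marked points indexed by the edges in $R$ — each $e\in R$ contributes one marked point to each of the one or two components it meets, and these are identified to a node — leaving the $n$ legs of $G$ as the marked points of the resulting curve $X$. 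Then $X$ has dual graph $G$, hence is stable, $X^\nu_R=\bigsqcup_v Z_v$, and $L_R:=\bigsqcup_v M_v$ is a theta characteristic on $X^\nu_R$; the pair $(X^\nu_R,L_R)$ is a point of $\Sgnbst$ whose dual spin graph is, by construction, $(G,P,s)$, so $\psi$ is surjective.

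The parity-refined statements then follow: by Remark~\ref{rem:chm} a spin curve and its dual spin graph have the same parity, so the whole square restricts over $\Sgnbst^{\pm}$ and $[\Spgn^{\pm}]$; surjectivity of $\psi$ onto $[\Spgn^{\pm}]$ is the construction above applied to spin graphs of the respective parity, surjectivity of the right arrow was noted above, and $\pi$ restricted to each component $\Sgnbst^{\pm}$ surjects onto $\Mgnbst$ (see Subsection~\ref{subspin}); for $g=0$ there is nothing further to say, since $\Sgnbst=\Sgnbst^{+}$ and $[\Spgn]=[\Spgn^{+}]$. The one step in all of this that is not purely formal is the selection of a theta characteristic of prescribed parity on each positive-genus component $Z_v$; I expect this to be the main obstacle, and if one prefers not to invoke the Riemann--Mumford relation it can be established by induction on the number of nodes, the delicate part being to keep track of cyclicity of the dual graphs under partial normalization.
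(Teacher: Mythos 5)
Your proof is correct and takes essentially the same route as the paper's: the key step is, for each $v\in V(G/P)$, choosing a theta characteristic of the prescribed parity on the stable component $Z_v$, using that both parities occur when $g(Z_v)>0$ and that the unique genus-$0$ theta characteristic $\cO_{\PP^1}(-1)$ is even. The only difference is directional — you build $X$ by gluing the $Z_v$'s along $R$, while the paper starts from any $X$ dual to $G$ and normalizes at $R$ — but the two constructions are trivially equivalent.
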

 
\begin{proof}
 The only non-evident claim is that the top horizontal arrow is surjective.
 Let $(G,P,s)$ be a spin graph. Pick any pointed curve $X$ dual to $G$.
 Set $R=E\smallsetminus P$ and let $X^{\nu}_R$ be the normalization of $X$ at $R$.
As above, we write $\{Z_v,\;  v\in V(G/P)\}$ for the connected components of $X^{\nu}_R$.
To conclude, we need to show that for every $v\in V(G/P)$ we can pick $L_v\in  \Pic(Z_v)$ such that  $L_v^2\cong \omega_{Z_v}$
and whose parity is that of $s(v)$.
For that, consider 
$ 
{\cS}_{Z_v}^0=\{L \in \Pic(Z_v):\  L ^2\cong \omega_{Z_v}\}$.

If $Z_v$ has positive genus, then ${\cS}_{Z_v}^0$ contains  elements of even and odd parity. Hence we can choose 
an element whose parity is the same as $s(v)$.
If $Z_v$ has genus zero, then ${\cS}_{Z_v}^0$ consists of one element  which, having degree $-1$, is necessarily even. But in this case $s(v)=0$, so we are done.
\end{proof}

Let $X$ be a  stable curve and let us  study the structure of   ${\cS}_X$.
Consider the diagram in Proposition~\ref{Spingraphprop}, and let us restrict the map $\psi$ to 
${\cS}_X$.
If  $(G,P,s)\in \Spgn$ we denote by  
$${\cS}_{(X,P,s)}:=\psi^{-1}([G,P,s])\cap {\cS}_X, 
$$ 
the subscheme of spin structures on $X$ whose dual graph is isomorphic to $(G,P,s)$.
We thus have a decomposition
$
 {\cS}_X=\sqcup_{(P,s)\in  [SP_G]}{\cS}_{(X,P,s)}.
$

As we shall state below, the structure of  ${\cS}_X$ at loci of the form  ${\cS}_{(X,P,s)} $
 does not depend on the parity function. It is thus convenient to consider, for any  $P\in \cC_G$,
 the following subscheme of ${\cS}_X$ 
 $$
 {\cS}_{(X,P)}=\sqcup_{s}{\cS}_{(X,P,s)}
 $$
where $s$ varies over all parity functions.

 \begin{remark}
 \label{SXP}
If $G$ is cyclic we have  $ 
 {\cS}_{(X,G)}=\cS_X^0
 $  (recall     \eqref{SZ0}). 
For an arbitrary $G$  there is a  bijection between the points
 of ${\cS}_{(X,P)}$ and the set of theta characteristics, ${\cS}^0_{X^{\nu}_R}$, on $X^{\nu}_R$.
\end{remark}

 We have  ${\cS}_{(X,P)}={\cS}^+_{(X,P)} \sqcup {\cS}^-_{(X,P)}$, self-explanatorily.
We now combine some results of \cite[Sect. 1]{CC03}  with \cite[Cor. 2.13]{Har82}.

\begin{fact} 
\label{CCK60}
Let $X$ be a stable   curve of genus $g$ and $G$  its dual graph. Set $b=b_1(G)$ and $|w|=\sum_{v\in V}w(v)$.
Let  $P\in \cC_G$. Then
\begin{enumerate}[(a)]
\item 
$ |{\cS}_{(X,P)}|=2^{b_1(P)+2|w|}
$   and the length
of ${\cS}_X$ at every point of ${\cS}_{(X,P)}$ is equal to $2^{b-b_1(P)}$, so that
$ 
\operatorname{length}{\cS}_{(X,P)}=2^{b+2|w|}.
$ 
  \item If $P$ is connected and $b_1(P)\ne 0$, then  
\[
|{\cS}^-_{(X,P)} |=|{\cS}^+_{(X,P)}|=2^{b_1(P)+2|w|-1}.
\]
  \end{enumerate}
 \end{fact}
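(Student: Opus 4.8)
The plan is to reduce the statement to the two classical inputs cited just before it: the analysis of theta characteristics on nodal curves in \cite[Sect.~1]{CC03} and Harris's count \cite[Cor.~2.13]{Har82}. First I would recall that, by Remark~\ref{SXP}, the points of ${\cS}_{(X,P)}$ are in bijection with the set ${\cS}^0_{X^\nu_R}$ of theta characteristics on the partial normalization $X^\nu_R$, where $R=E\smallsetminus P$. Since $X^\nu_R$ is a nodal curve whose dual graph is $\ov P$, it suffices to count theta characteristics on a connected nodal curve of arithmetic genus $b_1(\ov P_v)+w(\ov P_v)$-type data, and then multiply over the connected components $Z_v$ indexed by $v\in V(G/P)$. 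For a single connected nodal curve $Z$ with normalization of total geometric genus $\gamma$ and with $\delta$ nodes (here the nodes of $Z_v$ correspond to the edges of $P$ lying in that component, so $\delta$ contributes $b_1$ and the genus of the normalization contributes the $2|w|$ part), the set of theta characteristics is a torsor under the $2$-torsion of $\Pic^0(Z)$, which has order $2^{2\gamma}\cdot 2^{\delta'}$ where $\delta'$ counts the independent cycles; assembling these exponents over all components gives exactly $b_1(P)+2|w|$, yielding part (a)'s cardinality count. The length statement follows because $\pi\col\Sgnbst\to\Mgnbst$ is finite of degree $2^{2g}$ and flat, so the total length of ${\cS}_X$ is $2^{2g}$; distributing this over the strata ${\cS}_{(X,P)}$ according to how the gluing over exceptional components is free (each node of $X$ not in $P$, i.e.\ each element of $R$, contributes a factor of $2$ of ``multiplicity'' since different gluings give the same point), one gets the local length $2^{b-b_1(P)}$ at each point, hence $\operatorname{length}{\cS}_{(X,P)}=2^{b_1(P)+2|w|}\cdot 2^{b-b_1(P)}=2^{b+2|w|}$.

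For part (b), when $P$ is connected with $b_1(P)\neq 0$ the curve $X^\nu_R$ is connected of positive genus, and the claim is that even and odd theta characteristics split evenly. This is precisely the content of \cite[Cor.~2.13]{Har82} (or the Atiyah/Mumford theory of the Arf invariant): on a connected curve of genus $\geq 1$, the parity function on the set of theta characteristics, viewed as a quadratic form on the $\FF_2$-vector space $\Pic(Z)[2]$ refining the Weil pairing, is non-degenerate, and a non-degenerate quadratic form on a nonzero symplectic space takes each value on exactly half its (affine) domain. Here the hypothesis $b_1(P)\neq 0$ (or positive genus of the normalization) guarantees the relevant space is nonzero, so exactly half of the $2^{b_1(P)+2|w|}$ theta characteristics are even and half odd, giving $|{\cS}^\pm_{(X,P)}|=2^{b_1(P)+2|w|-1}$.

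I expect the main obstacle to be bookkeeping the discrepancy between the \emph{number of points} of ${\cS}_{(X,P)}$ and its \emph{length} as a subscheme of ${\cS}_X$: one must correctly argue that the scheme-theoretic fiber of $\pi$ acquires multiplicity exactly $2^{b-b_1(P)}$ at each spin structure with dual graph supported on $P$, which requires invoking the local description of $\Sgnbst\to\Mgnbst$ near the boundary (Cornalba's construction, as recalled in Section~\ref{subspin}) rather than just a naive count. The parity-splitting in (b) is essentially a citation once the torsor structure is in place, so the real work is the careful matching of exponents; this is routine given the cited references, and I would present it by isolating the local contribution of each node of $X$ — those in $P$ contributing to $b_1(P)$ via cycles of the normalization, those in $R$ contributing the length multiplicity — together with the factor $2^{2|w|}$ coming from the Jacobian $2$-torsion of the components of positive geometric genus.
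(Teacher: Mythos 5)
The paper offers no proof of this Fact: it is quoted directly from \cite[Sect.~1]{CC03} and \cite[Cor.~2.13]{Har82}, so your overall plan (bijection $\cS_{(X,P)}\cong\cS^0_{X^{\nu}_R}$, torsor under $\Pic(X^{\nu}_R)[2]$ of order $2^{b_1(P)+2|w|}$, local description of $\pi$ for the multiplicities, Harris for the parity) is the intended route, and your count in (a) is fine. But your argument for (b) contains a genuine error: you claim the parity is a nondegenerate quadratic form on $\Pic[2]$ and that ``a non-degenerate quadratic form on a nonzero symplectic space takes each value on exactly half its domain.'' That last statement is false — by the Arf-invariant dichotomy a nondegenerate quadratic form on ${\FF}_2^{2n}$ has $2^{2n-1}\pm 2^{n-1}$ zeros — and indeed on a smooth curve of genus $\gamma$ the split is $2^{\gamma-1}(2^\gamma+1)$ even versus $2^{\gamma-1}(2^\gamma-1)$ odd, not equal; if your argument worked it would prove an equal split for smooth curves as well. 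The equal split in (b) comes precisely from the hypothesis $b_1(P)\neq 0$: there is an edge of $P$ lying on a cycle of $P$, i.e.\ a non-normalized node of $X$ sitting on a cycle of the dual graph of $X^{\nu}_R$, and changing the gluing of the theta characteristic at that node reverses the parity of $h^0$ (this is what \cite[Cor.~2.13]{Har82}, as used in \cite[Sect.~1]{CC03}, supplies). This regluing is a fixed-point-free, parity-reversing involution of $\cS_{(X,P)}$, which is what gives $|\cS^+_{(X,P)}|=|\cS^-_{(X,P)}|$. Relatedly, your assertion that $P$ connected forces $X^{\nu}_R$ connected is wrong when $\ov P$ has isolated vertices (components of $X$ all of whose nodes lie in $R$); one argues on the component containing the cycle and observes that an equal split in one tensor factor forces an equal split of the product.

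A smaller inconsistency in (a): your parenthetical heuristic that each node in $R$ contributes a factor $2$ to the multiplicity would give $2^{|R|}$, whereas $b-b_1(P)=|R|-c(\ov P)+1$, so the two agree only when $\ov P$ is connected (e.g.\ for two components meeting in two nodes and $P=0$ the multiplicity is $2$, not $4$). You do, correctly, defer the actual multiplicity to the local structure of $\pi$ (the blown-up nodes giving $t_i=s_i^2$, together with the identifications coming from inessential automorphisms), so this is repairable bookkeeping rather than a wrong strategy — but as written the justification does not produce the exponent you assert.
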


Now, using Remark~\ref{SXP} we have
the following stratification, highlighting  the recursive structure of ${\cS}_X$  
by expressing its  boundary, ${\cS}_X\ssm  {\cS}^0_X$, in terms of   the theta-characteristics on the partial normalizations of $X$
\[
{\cS}_X^{\red}=\bigsqcup_{P\in \cC_G}{\cS}_{(X,P)}^{\red}
\cong\bigsqcup_{E\ssm R\in \cC_G}{\cS}_{X^{\nu}_R}^0.
\]

\subsection{Families of stable spin curves}
A {\it one-parameter family of curves} is a   family of (pointed) nodal curves $f\col\cX\to B$ 
over a   regular, connected curve $B$ with a marked point, $b_0\in B$; we denote by $X$ the fiber over $b_0$
and by $G$ its dual graph.
We  will always  assume that every fiber  over $B\smallsetminus \{b_0\}$  has the same dual graph, denoted by $H$;
we shall denote by $Y$ any such fiber,  which we call the ``generic'' fiber.
To   such  a family $f$ we associate its {\it dual contraction}:
$$
\gamma_f\col G\la H=G/S_0
$$
where $S_0\subset E(G)$ are the nodes of $X$ which are not specializations of nodes of $Y$.
We shall write $\gamma=\gamma_f$ when no confusion can occur.

Assume the above family is polarized, 
i.e.,  we have a line bundle, $M$, on $Y$ specializing to a line bundle,
$L$, on $X$.  Then by \cite[Prop. 4.3.2]{CC18} we have an identity of divisors on $H$ 
\begin{equation}
 \label{mdeg}
\mdeg M=\gamma_*\mdeg L.
\end{equation}
We shall write $(\cX, \cL)\to B$ for such a polarized family, with $\cL$ a family of line bundles on the fibers of $f$
restricting to $L$ on  $X$  and to $M$ on  $Y$.

A {\it one-parameter family of stable spin curves} is a polarized family, written $(\hcX,\wh{\ma L})\to B$, such that
  $\hf\col\hcX\to B$ is a one-parameter family of quasistable   curves as defined above (we write $\hX$, respectively $\hY$,  for its special, resp. generic,  fiber)    and such that for every $b\in B$ the fiber,  $(\hX_b,\wh{\ma L}_{|\hX_b})$, is a stable spin curve. 
Let    $f\col\cX\to B$ be the  stable model of $\hf$.
Then  $\hX$ is the blow-up of   $X$   at a set of nodes  $R$ of $X$, and its  dual graph is written $\hG_R$. 
 
Similarly, let $T$ be the set of nodes of $Y$  that are blown-up in $\hY$ and denote by $\wh{H}_T$ the 
dual graph of $\hY$.
 Since every exceptional component of $\hY$ specializes to an exceptional component of $\hX$, we have  (recall that $T\subset E(H)=E(G)\smallsetminus S_0$)
\begin{equation}
 \label{RS}
 T\subset R\smallsetminus S_0.
\end{equation}

Let  $\widehat{\gamma} \col \hG_R\to \hH_T$  be the dual contraction of $\hf$.
We have a
 commutative diagram of contractions, where the vertical arrows correspond to the stabilizations   $\hX\to X$ and $\hY\to Y$
\begin{equation}
\label{gammadiag}
\xymatrix{
\widehat{G}_R \ar_{st}[d]\ar[r]^{\widehat\gamma}& \widehat{H}_T\ar^{st}[d]\\
G\ar[r]^{\gamma}& H 
}
\end{equation}

\begin{lemma}
\label{purity}
Let $(\hcX,\wh{\ma L})\to B$  be a one-parameter family of stable spin  curves and   $f\col \cX\to B$ its stable model. 
Then, with the above notation,  $T=R\smallsetminus S_0$.
\end{lemma}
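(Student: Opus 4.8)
\emph{Proof proposal.} By \eqref{RS} we already know $T\subseteq R\smallsetminus S_0$, so the plan is to prove only the reverse inclusion $R\smallsetminus S_0\subseteq T$; fix $r\in R\smallsetminus S_0$. First I would record the combinatorics of the exceptional locus. Since $\hX=\hX_R$ is quasistable, its exceptional components are pairwise disjoint, so in $\hG_R$ the exceptional vertices --- those of weight $0$ and valence $2$ carrying no legs --- are precisely the $\hv_\rho$, $\rho\in R$ (there are no others, because $G$, being the dual graph of the stable curve $X$, contains no weight-$0$, valence-$2$, legless vertex), and they are pairwise non-adjacent; the same holds for $\hH_T$, whose exceptional vertices are exactly the $\hv_t$, $t\in T$. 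Consequently, it suffices to show that the dual contraction $\widehat\gamma\col\hG_R\to\hH_T$ does not collapse either of the two edges of $\hG_R$ incident to $\hv_r$: then, exceptional vertices being non-adjacent, the connected subgraph contracted by $\widehat\gamma$ onto $\widehat\gamma(\hv_r)$ is just $\{\hv_r\}$, so $\widehat\gamma$ carries $\hv_r$ isomorphically onto a weight-$0$, valence-$2$, legless vertex of $\hH_T$, i.e.\ some $\hv_t$, $t\in T$; and a further bookkeeping of which edge of $H$ the two surviving halves are sent to will identify $t$ with $r$, giving $r\in T$.

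To show that $\widehat\gamma$ fixes the star of $\hv_r$, I would exploit the commutativity of the square \eqref{gammadiag}, namely $st\circ\widehat\gamma=\gamma\circ st$. Denote by $e_r',e_r''$ the two edges of $\hG_R$ meeting $\hv_r$; the stabilization $st\col\hG_R\to G$ contracts one of them and carries the other to the edge $r$ of $G$. Taking the model of $st$ in which, say, $e_r''$ survives, $st$ sends $e_r''$ to $r\in E(G)$, hence $\gamma\circ st$ sends $e_r''$ to $\gamma(r)$, which is still an edge of $H$ because $r\notin S_0$; by commutativity $st\circ\widehat\gamma$ sends $e_r''$ to an edge of $H$, so $\widehat\gamma(e_r'')$ cannot be a vertex --- i.e.\ $\widehat\gamma$ does not contract $e_r''$. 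Running the symmetric argument with the other model of $st$ shows that $\widehat\gamma$ does not contract $e_r'$ either, and then the edge of $\hH_T$ that is the common $\widehat\gamma$-image of $e_r'$ and $e_r''$ maps under $st\col\hH_T\to H$ to $\gamma(r)=r$, which pins down $\widehat\gamma(\hv_r)=\hv_r$.

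The only genuinely delicate point is the step just described, that $\widehat\gamma$ does not collapse an edge adjacent to $\hv_r$ when $r\notin S_0$; equivalently, that the two nodes of $\hX$ lying on $E_r$ are specializations of nodes of $\hY$ in that case. Geometrically this is the statement that a node of $X$ surviving to the generic fibre propagates through the family, so that blowing it up in the special fibre forces the corresponding node of $Y$ to be blown up in $\hY$ too. I would make it rigorous through \eqref{gammadiag} as above; an alternative route is via the multidegree identity \eqref{mdeg} applied to $\wh{\ma L}$, which gives $\mdeg(\wh{\ma L}_{|\hY})=\widehat\gamma_*\mdeg(\wh{\ma L}_{|\hX})$, together with the facts that $\wh{\ma L}$ has degree $1$ on every exceptional component and that the structure map $\alpha$ of a spin family detects exactly the exceptional locus of each fibre. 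I expect the main nuisance in writing this out to be purely clerical --- keeping straight, across the square \eqref{gammadiag}, which half-edge at each $\hv_\rho$ is contracted by $st$ and which survives --- rather than conceptual, since all the geometric input is already contained in \eqref{RS}, \eqref{mdeg} and \eqref{gammadiag}.
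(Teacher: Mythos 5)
Your reduction of the lemma to the claim that, for $r\in R\smallsetminus S_0$, the dual contraction $\widehat\gamma$ collapses neither of the two edges $e_r',e_r''$ of $\hG_R$ adjacent to $\hv_r$ is exactly the paper's own reformulation (``either both nodes of an exceptional component are preserved, or neither is''). But your justification of that key step is circular. The vertical arrows $st$ in \eqref{gammadiag} are not canonical graph morphisms: at each exceptional vertex one must \emph{choose} which of the two adjacent edges to contract, and the two choices yield different composites $\gamma\circ st$ (one sends $e_r''$ to the edge $r$ of $H$, the other contracts $e_r''$). Commutativity of the square can therefore only be asserted for a choice of $st$ compatible with the given $\widehat\gamma$, not for both ``models'' at once. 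Indeed, in the very scenario you must exclude ($r\notin S_0$, $r\notin T$, with $\widehat\gamma$ contracting exactly one of $e_r',e_r''$), the square still commutes for the matching choice of $st$; so invoking commutativity ``for the other model of $st$'' is assuming precisely what has to be proved. Your fallback via \eqref{mdeg} does not close the gap either: take $X$ irreducible of genus $g$ with one node $r$, $R=\{r\}$, $S_0=\emptyset$, and a putative generic fiber $(\hY,\wh M)=(Y,\wh M)$ with $T=\emptyset$; then $\widehat\gamma_*\mdeg \wh L$ has value $(g-2)+1=g-1$ at the unique vertex, which equals $\mdeg\wh M$, so the forbidden degeneration is perfectly consistent with the multidegree identity.

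The missing ingredient is genuinely geometric and comes from the spin structure: the paper's proof consists of the same reduction followed by a citation of the explicit description of the universal deformation of a spin curve in \cite[Subsect. 3.2]{CCC07}, where it is shown that the two nodes of $\hX$ lying on an exceptional component have \'etale-local smoothing parameters that coincide (equivalently, the map to the deformation of the stable model is $t=\tau^2$ at blown-up nodes), so these two nodes are smoothed or preserved simultaneously; in particular a node persisting in the stable model ($r\notin S_0$) forces $r\in T$. Some input of this kind is unavoidable: for an arbitrary one-parameter family of quasistable curves (forgetting $\wh{\ma L}$) one can smooth exactly one node of an exceptional component, so no purely combinatorial or diagram-chasing argument, and no multidegree bookkeeping, can prove the lemma.
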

\begin{proof}
By \eqref{RS} we need to prove that every $r\in R\smallsetminus S_0$ lies in $T$. 
This is equivalent to saying that every exceptional component, $E$,  of the special fiber, $\hX$,
is either the specialization  of an exceptional component of the generic fiber,
or neither of the two nodes lying in $E$ is specialization of a node of the generic fiber.

This is a special case of    the explicit description of the deformation space given in \cite[Subsect. 3.2]{CCC07}.
At the beginning of that subsection it is shown that, if $E$ is an exceptional component of the special fiber,
then either both nodes of $E$ are preserved, or no node  of $E$ is preserved.
This is precisely what is needed here.
   \end{proof}
 \subsection{The stratification of $\Sgnbst$}\label{secfam} 
The goal of this subsection is to describe the stratification of   $\Sgnbst$.
The strata of $\Mgnbst$ are the loci, $\cM_G$, parametrizing  curves whose dual graph is isomorphic to $G$, for $G$ varying in $\Sgn$. 
In \cite[Subsect. 3.4]{ACP15} the strata $\cM_G$ are given an explicit presentation
 
\begin{equation}
 \label{Mgnt}
\wt{\cM}_G:=\prod_{v\in V(G)}\cM_{w (v),\deg (v)+\ell (v)}\la \cM_G=\left[\wt{\cM}_G/\text{Aut}(G)\right].
\end{equation}
In particular,   $\cM_G$ is irreducible. It is well known that $\Mgnbst=\sqcup_{G\in \Sgn}\cM_G$  is a graded stratification.
We shall write $\cM_v:=\cM_{w (v),\deg (v)+\ell (v)}$.

Consider now the morphism $\pi\col \Sgnbst\ra \Mgnbst$, and set
$$
 {\cS}_G:=\pi^{-1}(\cM_G).
$$
Now, given $P\in \cC_G$ and $(P,s)\in SP_G$ we let
${\cS}_{(G,P)}$ be the substack of $\Sgnbst$ parametrizing stable spin curves whose dual spin graph 
has  support  isomorphic to $(G,P)$, and 
  ${\cS}_{(G,P,s)}$  be the substack   where the dual spin graph is isomorphic to $(G,P,s)$.
  Then we have
  \begin{equation}
 \label{oSGStrata}
 {\cS}_G=\sqcup_{P\in [\cC_G]}{\cS}_{(G,P)}=\sqcup_{(P,s)\in [SP_G]}{\cS}_{(G,P,s)}.
 \end{equation}
These decompositions are the extensions over the moduli spaces of the ones described   earlier for a fixed curve.
Notice that if we have two spin structures, $(P,s)$ and $(P',s')$, on $G$ and an automorphism of $G$ mapping one to the other, then, by definition,
$ 
{\cS}_{(G,P,s)}= {\cS}_{(G,P',s')}.
$ 
We have thus  a decomposition
\begin{equation}
 \label{Sgnstrat}
\Sgnbst=\bigsqcup_{[G,P,s]\in [\mathcal{SP}_{g,n}]}\ma S_{(G,P,s)}.
\end{equation}

  \begin{example}
\label{basicex}
Recall that we view $G$ as an element in $\cE_G$.
The stratum ${\cS}_{(G,G)}$ is not empty if and only if $G$ is cyclic. Let us assume this is the case.
Then ${\cS}_{(G,G)}$ parametrizes theta-characteristics on the curves in $\cM_G$, i.e.
$$
{\cS}_{(G,G)}=\{(X,L) :    X\in \cM_G, \   L\in \cS^0_X\}.
$$
\end{example}

\begin{prop}\label{thm-strata}
Let $(G,P,s)$ and $(H,Q,s')$ be  stable spin graphs. Then 
the following are equivalent.
\begin{enumerate}[(a)]
 \item
 \label{thm-strata1}
$\cS_{(G,P,s)}\cap \overline{\cS}_{(H,Q,s')}\neq \emptyset$.
\item
 \label{thm-strata2}
 $[G,P,s]\geq [H,Q,s']$.
\item
 \label{thm-strata3}
$\cS_{(G,P,s)}\subset \overline{\cS}_{(H,Q,s')}$.
\end{enumerate}
\end{prop}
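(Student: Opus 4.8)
The plan is to prove the cyclic implications $(a)\Rightarrow(b)\Rightarrow(c)\Rightarrow(a)$, with the bulk of the work concentrated in $(a)\Rightarrow(b)$ and $(b)\Rightarrow(c)$; the implication $(c)\Rightarrow(a)$ is immediate since the strata $\cS_{(G,P,s)}$ are nonempty (each stratum is dominated by a product of smaller moduli spaces of spin curves, which are nonempty by Proposition~\ref{Spingraphprop} together with $2g-2+n>0$). So the real content is the equivalence between incidence of strata and the partial order on $[\Spgn]$.

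\textbf{Step 1: $(a)\Rightarrow(b)$ via one-parameter families.} Suppose $\cS_{(G,P,s)}\cap\overline{\cS}_{(H,Q,s')}\neq\emptyset$. Because $\Sgnbst$ is a finite-type algebraic stack, incidence of locally closed strata can be detected by a one-parameter family: there is a one-parameter family of stable spin curves $(\hcX,\wh{\ma L})\to B$ over a regular connected curve $B$ with marked point $b_0$, whose generic fiber $\hY$ lies in $\cS_{(H,Q,s')}$ and whose special fiber $\hX$ lies in $\cS_{(G,P,s)}$. Passing to the stable models $f\col\cX\to B$ gives the dual contraction $\gamma\col G\to H=G/S_0$. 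The key is to show $\gamma_*(P,s)=(Q,s')$, which then gives a contraction of spin graphs $(G,P,s)\to(H,Q,s')$ and hence $[G,P,s]\geq[H,Q,s']$. Using Lemma~\ref{purity} we get $T=R\smallsetminus S_0$ for the blown-up nodes, and diagram~\eqref{gammadiag} relates the quasistable dual graphs; from this one reads off that the cyclic subgraph of the generic fiber is the image $\gamma_*P$, i.e. $Q=\gamma_*P$. For the parity functions, the multidegree-specialization identity~\eqref{mdeg}, the description of $s'$ in Lemma~\ref{ontoS}\eqref{ontoSa} as a sum of $s$-values over the preimages under $\ov\gamma\col G/P\to H/Q$, and semicontinuity of $h^0$ (which forces the parity to be \emph{exactly} the deformation-invariant value on each connected component $Z_v$) together yield $s'=\gamma_*s$. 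This step is where most care is needed.

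\textbf{Step 2: $(b)\Rightarrow(c)$ by constructing smoothings.} Conversely, assume there is a contraction $\gamma\col(G,P,s)\to(H,Q,s')$; I want to show every point of $\cS_{(G,P,s)}$ lies in $\overline{\cS}_{(H,Q,s')}$. Fix $(\hX_R,\hL_R)\in\cS_{(G,P,s)}$ over a curve $X$ with dual graph $G$. The contraction $\gamma$ corresponds to a subset $S_0\subset E(G)$ with $H=G/S_0$, and $Q=\gamma_*P=P\smallsetminus S_0$. I would build a one-parameter family $(\hcX,\wh{\ma L})\to B$ whose special fiber is $(\hX_R,\hL_R)$ and whose generic fiber is a stable spin curve with dual spin graph $(H,Q,s')$: geometrically, smooth precisely the nodes of $X$ indexed by $S_0$ while keeping the remaining nodes, the normalization locus, and the exceptional structure intact; the compatibility of $S_0$ with $P$ (namely $S_0\subset R=E\ssm P$ is forced, since $Q=P\ssm S_0$ must still be cyclic with $P\in\cC_G$ mapping onto $Q$, so $S_0$ only involves non-$P$ edges) ensures that the theta-characteristic data on the partial normalization deforms. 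One uses the standard deformation theory of (stable) spin curves as in \cite{CCC07} to produce the line bundle $\wh{\ma L}$ extending $\hL_R$; the parity of the generic fiber equals $s'=\gamma_*s$ by deformation-invariance of $h^0\bmod 2$ on each component, matching the construction in the proof of Proposition~\ref{Spingraphprop}. Since $\cS_{(G,P,s)}$ is homogeneous under the relevant moduli interpretation (its structure near such points is independent of which point of $\cM_G$ we sit over, by the product presentation~\eqref{Mgnt} and Remark~\ref{SXP}), doing this for one point of each component of the stratum suffices, giving $\cS_{(G,P,s)}\subset\overline{\cS}_{(H,Q,s')}$.

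\textbf{Main obstacle.} I expect the principal difficulty to be the bookkeeping in Step~1 identifying $\gamma_*(P,s)$ precisely with $(Q,s')$: one must carefully track how the exceptional components and the normalization locus behave under specialization (this is exactly what Lemma~\ref{purity} and diagram~\eqref{gammadiag} are set up to control), and then match the parity functions using~\eqref{mdeg} and semicontinuity. The subtlety is that $h^0$ is only upper-semicontinuous, so a priori the special fiber could acquire larger $h^0$; one must invoke that the parity of $h^0$ of a theta characteristic is locally constant in families (deformation-invariant), which pins down $s'(v')\equiv\sum_{v\in\ov\gamma^{-1}(v')}s(v)$. A secondary but real point in Step~2 is verifying that the smoothing family one writes down genuinely has quasistable total space with the correct exceptional structure — i.e. that no new exceptional components are forced and none disappear — which is again governed by the local analysis of \cite{CCC07} invoked in Lemma~\ref{purity}.
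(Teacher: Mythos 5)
Your Step 1 is essentially the paper's argument (one-parameter family, Lemma~\ref{purity} to get $\gamma_*P=Q$, deformation invariance of the parity to get $s'=\gamma_*s$; the appeals to \eqref{mdeg} and to semicontinuity of $h^0$ are superfluous but harmless), and (c)$\Rightarrow$(a) is indeed immediate. The genuine gap is in Step 2, and it is the parenthetical claim that ``$S_0\subset R=E\smallsetminus P$ is forced.'' This is false: a contraction of spin graphs may very well contract edges of $P$, since $\gamma_*P=P\smallsetminus S_0$ is automatically cyclic in $G/S_0$ by Lemma~\ref{funct}, whatever $S_0$ is. Such contractions are not exotic — they are exactly what is needed to reach, say, $(G_{g,n},0,s)$ from any $(G,P,s)$ as in Proposition~\ref{pospin} (e.g.\ contracting a cycle contained in $P$ onto a vertex of positive weight). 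Since you use this claim precisely to ``ensure that the theta-characteristic data on the partial normalization deforms,'' your construction of the smoothing family only covers the case $S_0\cap P=\emptyset$ and leaves the general case of (b)$\Rightarrow$(c) unproved; when $S_0$ meets $P$ you must explain why the square root of the dualizing sheaf extends across the smoothed nodes of $P$, which is exactly the nontrivial point. A secondary weakness: the closing ``homogeneity'' reduction to one point per component does not suffice (a closed subset of $\cS_{(G,P,s)}$ meeting every component need not be everything, and irreducibility of the strata is only proved later); you should instead run the construction at every point, which your setup in fact permits.

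For comparison, the paper sidesteps the line-bundle extension entirely: given the contraction $\gamma$, it smooths only the underlying stable curves, obtaining a family of stable curves with special fibre $X$ dual to $G$ and generic fibre dual to $H$; since $\pi\col\Sgnbst\to\Mgnbst$ is finite, every point of $\cS_{(X,P,s)}$ is automatically a specialization of some spin structure on the generic fibre, lying in some stratum $\cS_{(Y,\ol Q,\ol s')}$. Applying the already-proved (a)$\Rightarrow$(b) analysis (Lemma~\ref{purity} again) to that specialization forces $(\ol Q,\ol s')=\gamma_*(P,s)=(Q,s')$, giving the containment. If you want to keep your direct approach, the correct substitute for your false claim is the local description of the universal deformation of a spin curve in \cite{CCC07} (as recalled in Subsection~\ref{rem:defospin}): every deformation of the stable model lifts, with the non-blown-up nodes (those in $P$) smoothing via $\pi^\#(t_i)=s_i$ and the blown-up ones via $\pi^\#(t_i)=s_i^2$, and one then identifies the generic stratum as above.
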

\begin{proof}
We denote, as before,  $R=E(G)\smallsetminus P$ and $T=E(H)\smallsetminus Q$.

 \eqref{thm-strata1} $\Rightarrow$  \eqref{thm-strata2}.
By hypothesis
there exists  a  one-parameter family of stable spin curves $(\hcX,\wh{\ma L})\to B$   
whose generic fiber, $(\hY,  \cL _{\hY}) $, 
has dual spin graph  $(H,Q,s')$   and whose special fiber, $(\hX,  \cL _{\hX})$, has dual spin graph  $(G,P,s)$. 
Recall that, as described in  the commutative diagram \eqref{gammadiag}, we have 
a contraction $\gamma\col G\ra H$. 
We need to show that $\gamma_*(P,s)=(Q,s')$. 
From Lemma~ \ref{purity} we derive that $\gamma_*P=Q$.
The fact that  $s'(v')=\sum_{v\in\ol{\gamma}^{-1}(v')}s(v)$ is a consequence of the deformation invariance of the parity of spin curves.

  \eqref{thm-strata2} $\Rightarrow$  \eqref{thm-strata3}.  Consider a contraction $\gamma\col G\to H$ such that  $\gamma_*(P,s)=(Q,s')$. Hence  $\gamma$ induces a contraction
 $\widehat \gamma\col\widehat{G}_{R}\to \widehat{H}_T$
 and we can form the commutative diagram \eqref{gammadiag}. 
 Now, let $X$ be a pointed curve dual to $G$. The existence of the contraction $\gamma$ implies the existence of a one-parameter family
 of stable   curves
 $f\col\ma X\to B$ whose generic fiber, $Y$, is dual to $H$ and whose special fiber is $X$.
  Therefore the space, ${\cS}_Y$,  of spin structures on $Y$, 
specializes to ${\cS}_X$. 
We know that every  $[(\hX ,\wh L )]\in {\cS}_{(X,P,s)}$ is the specialization of some spin structure on $Y$
 lying in some  stratum ${\cS}_{(Y,\ol Q, \ol s')}$; using Lemma \ref{purity} as before, we get $\gamma_*(P,s)=(\ol Q,\ol s')$. Since $\gamma_*(P,s)=(Q,s')$ we obtain  $(\ol Q, \ol s')=(Q,s')$,
 as wanted.
As  \eqref{thm-strata3} $\Rightarrow$  \eqref{thm-strata1} is obvious, we are done.
   \end{proof}

We will prove in Theorem~\ref{cor:irr} that $\ma{S}_{(G,P,s)}$ is irreducible, and hence, combining with the previous proposition, we will obtain the following.

\begin{thm}\label{thm:strata}
Decomposition \eqref{Sgnstrat}  is a graded stratification of $\Sgnbst$.
\end{thm}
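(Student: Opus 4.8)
The plan is to verify that the decomposition \eqref{Sgnstrat} satisfies the defining conditions of a graded stratification in the sense of \cite[Def.~1.3.2]{CC18}: the strata $\cS_{(G,P,s)}$ are pairwise disjoint, locally closed, and cover $\Sgnbst$; the closure of each stratum is a union of strata; the resulting poset of strata is graded; and the codimension of a stratum is the value of the rank function at the corresponding element. The geometric substance is already packaged in two earlier results, namely Proposition~\ref{thm-strata}, which controls the closure relations among the strata, and Theorem~\ref{cor:irr}, which gives their irreducibility; what remains is essentially formal.

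First I would observe that the strata $\cS_{(G,P,s)}$ are pairwise disjoint and cover $\Sgnbst$ straight from the construction, since every stable spin curve has a well-defined dual spin graph, hence a well-defined class in $[\Spgn]$, and distinct classes give distinct loci. As $[\Spgn]$ is finite and the $\cS_{(G,P,s)}$ form a finite disjoint cover, Proposition~\ref{thm-strata} yields at once
\[
\overline{\cS_{(G,P,s)}}=\bigsqcup_{[H,Q,s']\geq[G,P,s]}\cS_{(H,Q,s')},
\]
so every closure is a union of strata and the relation ``$\cS_{(H,Q,s')}\subseteq\overline{\cS_{(G,P,s)}}$'' reproduces the partial order of $[\Spgn]$. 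Local closedness of $\cS_{(G,P,s)}$ then follows: the subset $\bigcup_{[H,Q,s']>[G,P,s]}\overline{\cS_{(H,Q,s')}}$ is a finite union of closed sets, hence closed, and by the displayed formula its complement inside $\overline{\cS_{(G,P,s)}}$ is precisely $\cS_{(G,P,s)}$.

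It then remains to exhibit the grading. By Proposition~\ref{pospin}(d) and Remark~\ref{pospinrk}, $[\Spgn]$ is graded with rank function $[G,P,s]\mapsto|E(G)|$, so it is enough to check that $\codim_{\Sgnbst}\cS_{(G,P,s)}=|E(G)|$. Since $\pi\col\Sgnbst\to\Mgnbst$ is finite and $\Mgnbst$ has pure dimension $3g-3+n$, so does $\Sgnbst$. On the other hand $\cS_{(G,P,s)}\subseteq\cS_G=\pi^{-1}(\cM_G)$, which is finite over the irreducible stratum $\cM_G$; by \eqref{Mgnt} the latter has dimension $3g-3+n-|E(G)|$, and $\cS_{(G,P,s)}$ dominates $\cM_G$ by Proposition~\ref{Spingraphprop}. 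Combining this with the irreducibility of $\cS_{(G,P,s)}$ provided by Theorem~\ref{cor:irr}, we get $\dim\cS_{(G,P,s)}=3g-3+n-|E(G)|$, i.e.\ codimension $|E(G)|$, which is exactly the rank of $[G,P,s]$ in $[\Spgn]$.

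I do not expect a serious obstacle, as the two difficult ingredients are already available. The only delicate point is the dimension bookkeeping in the last paragraph: one must ensure that $\cS_{(G,P,s)}$ genuinely dominates $\cM_G$, so that it acquires the expected dimension rather than a smaller one, which relies on the surjectivity statement in Proposition~\ref{Spingraphprop} (equivalently on the point counts of Fact~\ref{CCK60}), and that $\Sgnbst$ has pure dimension $3g-3+n$, which is again a consequence of the finiteness of $\pi$.
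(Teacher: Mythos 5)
Your proposal is correct and takes essentially the same route as the paper, which obtains the theorem precisely by combining Proposition~\ref{thm-strata} (closure relations matching the poset $[\Spgn]$) with the irreducibility of the strata from Theorem~\ref{cor:irr}, together with the routine dimension bookkeeping $\dim \ma S_{(G,P,s)}=3g-3+n-|E(G)|$ that you spell out. One small framing point: in the definition of graded stratification from \cite[Def.~1.3.2]{CC18} the irreducibility of the strata is itself one of the defining conditions, so Theorem~\ref{cor:irr} enters as an axiom to be checked and not merely as an aid to the dimension count, where in fact finiteness and surjectivity over the irreducible $\cM_G$ already suffice.
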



\subsection{Local analysis}
 \label{rem:defospin}
We here recall the description of the versal deformation space of a   pointed spin curve and of the morphism $\pi\col \Sgnbst\ra \Mgnbst$. More details can be found in \cite{ACG11} and \cite{CCC07}.

Let $x\in \ol{\cM}_{g,n}$ and $y\in \Sgnbst$ be closed points such that  $x=\pi(y)$. Assume that $x$ represents the  curve $X$ with dual graph $G$ and that $y$ represents the   spin curve $(\wh X, \wh L)$. Let $E=\{e_1,\dots,e_\delta\}$ be the nodes of $X$, and assume that the exceptional components of $\wh X$ lie over the nodes $e_1,\dots,e_r$, for some $r\in\{0,\dots,\delta\}$.  
There are  \'etale local coordinates $t_1,\dots,t_{3g-3+n}$ such that 
\begin{equation}\label{eq:m}
\wh {\ma O}_{\ol{\cM}_{g,n},x}\cong k[[t_1,\dots,t_{3g-3+n}]],
\end{equation} 
where the vanishing of $t_i$ is the divisor corresponding to the deformations of $X$ that are trivial locally at the node $e_i$, for $i=1,\dots,\delta$.

Similarly, there are  \'etale local coordinates $s_1,\dots,s_{3g-3+n}$ such that 
\[
\wh{\ma O}_{\ol{S}_{g,n},y}\cong k[[s_1,\dots,s_{3g-3+n}]].
\]
By Lemma~\ref{purity} we can assume that  for $i=1,\dots,r$, respectively, for $i=r+1,\dots,\delta$, the vanishing of $s_i$ is the locus  of the deformations of $(\wh X, \wh L)$ that are trivial locally at the nodes contained in the exceptional component of $\wh X$ lying over $e_i$, respectively, are trivial locally at the node $e_i$  seen as a node of $\wh X$. 

Locally at $x$ and $y$, the   morphism $\pi\col \Sgnbst\ra \Mgnbst$ is induced by the ring homomorphism $\pi^\#\col\wh {\ma O}_{\ol{\cM}_{g,n},x}\ra \wh {\ma O}_{\Sgnbst,y}$ given by $\pi^\#(t_i)=s_i^2$ for $i\le r$, and $\pi^\#(t_i)=s_i$ for $i>r$.

Let now $G$ be a stable graph.
The closures in $\Mgnbst$ and in 
$\Sgnbst$
of  the   loci
$\cM_G$, ${\cS}_G$ and  ${\cS}_{(G,P,s)}$ will be written, respectively,
$\ov{\cM}_G$,  $\ov{\cS}_G$ and $\ov{\cS}_{(G,P,s)}$.  We have $\ov{\cS}_G=\pi^{-1}(\ov{\cM}_G)$.

Let $G$ and $G'$ be two stable graphs such that ${\cM}_{G'}\subset  \ol{\cM}_G$. We need to describe the structure of  $\ol{\cM}_G$
at a point $x'\in {\cM}_{G'}$.

 Let $X'$ be a stable curve parametrized by $x'$. 
Write  $E(G')=\{e_i\}_{1\le i\le \delta'}$ so that $\delta' \geq \delta$. Let $J$ be the set of all contractions of $G'$ to $G$.

As explained in the proof of \cite[Prop.  XII.10.11]{ACG11}, the  \'etale  local picture of   $\ol{\cM}_G$   at the point $x'$ is the one of $|J|$ linear subspaces in $k^{3g-3+n}$ of dimension $3g-3+n-\delta$ meeting transversally at the origin. 
Now, as in  \eqref{eq:m}, we have  
$
\wh{\ma O}_{\ol{\cM}_{g,n},x'}\cong k[[t_1,\dots,t_{3g-3+n}]]. 
$  
Given a contraction  $\gamma \col G'\to G$ in $J$,  we consider the ideal $\ma I_\gamma=(t_{\gamma_1},\dots,t_{\gamma_\delta})$, where $\{e_{\gamma_1},\dots,e_{\gamma_\delta}\}$ is the image of $E(G)$ in the   injection $E(G)\subset E(G')$ induced by $\gamma$. Then we have
$$
\wh{\ma O}_{\ol{\cM}_G, x'}\cong\frac{k[[t_1,\dots,t_{3g-3+n}]]}{\prod_{\gamma\in J} \ma I_\gamma}=\frac{k[[t_1,\dots,t_{\delta'}]]}{\prod_{\gamma\in J} \ma I_\gamma}\otimes k[[t_{\delta'+1},\dots,t_{3g-3+n}]].
$$
Let
$$
\nu:\ol{\cM}^\nu_{G}\la \ol{\cM}_G
$$
be
the normalization  of $\ol{\cM}_G$. Then $\ol{\cM}^\nu_{G}$ is smooth and, locally over $x'$, is the disjoint union of the $|J|$ branches described by the ideals $\ma I_\gamma$. Hence, given $\gamma\in J$, if we let $x'_\gamma$ be the point in $\ol{\cM}^\nu_G$ lying on the branch corresponding to $\ma I_\gamma$, we have 
\begin{equation}\label{eq:Igamma}
 \wh{\ma O}_{\ol{\cM}^\nu_G, x'_\gamma}\cong\frac{k[[t_1,\dots,t_{3g-3+n}]]}{\ma I_\gamma}.
\end{equation}

We define
\[
\ol{\wt{\cM}}_G:=\prod_{v\in V(G)} \ol{\cM}_{w(v),\deg(v)+\ell(v)}.
\]
The normalization $\ol{\cM}^{\nu}_G$ of $\ol{\cM}_G$ is given an explicit presentation 
\begin{equation}\label{eq:tildebarra}
\chi\col\ol{\wt{\cM}}_G\la\ol{\cM}^{\nu}_G
\end{equation}
such that  $\ol{\cM}^{\nu}_G=[\ol{\wt{\cM}}_G/\Aut(G)]$ (see \cite[Prop. XII.10.11]{ACG11}).
 \subsection{Presentations}\label{sec:presentation}
 Let $(G,P,s)$ be a stable spin graph.
Let $\ma  X_G\to \cM_G$ be the universal curve   and $\wt{\ma X}_G$ its base change by the map
$ \wt{\cM}_G\to \cM_G$.
Set  
 $\wt{\ma S}_{(G,P,s)}:=\ma S_{(G,P,s)}\times_{\cM_G}\wt{\cM}_G.$ 

We consider first the case  $P=G$, and look at the
  Cartesian diagram
\begin{equation}\label{diag:univ}
\SelectTips{cm}{11}
\begin{xy} <16pt,0pt>:
\xymatrix{ 
\wt{\ma X}_G \ar[r]\ar[d]& \wt{\cM}_G\ar[d]&\tcS_{(G,G,s)} \ar[l]\ar[d]\\
 \ma X_G \ar[r]&  \ma{M}_G=[\wt{\cM}_G/\Aut(G)]&\ma S_{(G,G,s)}\ar[l]  \\
}
\end{xy}
\end{equation}
  
We can view $\tcS_{(G,G,s)}$ as the moduli space of theta characteristics of the fibers of $\wt{\ma X}_G \ra\wt{\cM}_G$, whose  parity function is prescribed by $s(v)$, where $v$ is the unique vertex of $V(G/G)$. 

In case $P\neq G$   we need to  introduce a new space to have a suitable presentation of $S_{(G,P,s)}$. Set $R=E\ssm P$.   Recall that the set of    connected components of $(G,P,s)$ is written   $\{(\ov P_v,s_v), \  v\in V(G/P)\}$.  
Since  $\ov P_v$
is a  stable graph  (with legs), we can consider, for every $v$, the substacks 
$$ 
\cM_{\ov P_v}\subset \ol{\cM}_{g_v, \ell_v+d_v}
\;\text{ and }\;
\ma S_{(\ov P_v,\ov P_v,s_v)} \subset \ov{\cS}_{g_v, \ell_v+d_v}
 $$ 
where $g_v$,  $\ell_v$ and $d_v$ are   as follows. 
Set 
$g_v=g(\ov P_v)$ and 
$ 
\ell_v =\sum_{u\in V(\ov P_v)}\ell_G(u)$, and $ d_v=\sum_{u\in V(\ov P_v)}\deg_R(u)
$ 
where $\ell_G(u)$ is the number of legs of $G$ ending at $u$, and $\deg_R(u)$ is the degree of $u$ as a vertex of the subgraph $\langle R\rangle$ of $G$. As $G\in \Sgn$, we have
$ 
\sum_{v\in V(G/P)}\ell_v=n$ and $\sum_{v\in V(G/P)}d_v=2|R|$ and $\sum_{v\in V(G/P)}(g_v-1)+|R|+1=g$.
 We define   
\begin{equation}\label{eq:Sprimedef}
\hcS_{(G,P,s)} := \prod_{v\in V(G/P)} \tcS_{(\ol P_v,\ol P_v,s_v)}.
\end{equation}
Of course, we have $\hcS_{(G,G,s)} =\tcS_{(G,G,s)}$.
 
 Consider the group $\Aut(\oP)= \prod_{v\in V(G/P)}\Aut(\oP_v)$ and recall that
its elements  fix every leg of $\oP$.
Therefore each of them  induces an automorphism of $G$ which fixes every edge in $R$ by  fixing its half-edges,
and maps every component, $\oP_v$, to itself.
Hence we have an  injection
$
\Aut(\oP)\ha \Aut(G,P,s).
$

\begin{remark}\label{modPbar}
The natural map
$$
\hcS_{(G,P,s)}  = \prod_{v\in V(G/P)} \tcS_{(\ol P_v,\ol P_v,s_v)}\la \prod_{v\in V(G/P)} \ma S_{(\ol P_v,\ol P_v,s_v)} 
$$
 is the quotient of the natural action of $\Aut(\oP)$ on $\hcS_{(G,P,s)} $.
 Indeed, we know already that $\ma S_{(\ol P_v,\ol P_v,s_v)} =[ \tcS_{(\ol P_v,\ol P_v,s_v)}/\Aut(\oP_v)]$.
As we said above,  the elements of  $\Aut(\oP)$       leave  every factor of $\hcS_{(G,P,s)} $ invariant.
 \end{remark}

 We denote by    $\Aut(G/P,s)$ the group of automorphisms of $G/P$ 
 which preserve the sign function, $s$, on $V(G/P)$; in symbols
$$
\Aut(G/P,s)
 =\{\alpha\in \Aut(G/P): \ \alpha(v)=u \Rightarrow s(v)=s(u),\  \forall v,u\in V(G/P)\}.
$$ 
Now,  there is a natural homomorphism  
\begin{equation}
 \label{homo}
\Aut(G,P,s) \la \Aut(G/P,s).\end{equation}
Indeed,   assume, for simplicity   $G$   free from legs, then
any $\alpha\in \Aut(G,P,s)$ maps $R$ to itself,  hence $\alpha$ maps the set of half-edges of $R$ to itself.  
 Now the set of half-edges of $R$ is the set of half-edges of $G/P$, hence 
 we have a bijection  $\ov{\alpha}:H(G/P)\to H(G/P)$ mapping $h$ to $\alpha(h)$. It is trivial to check that $\ov{\alpha}$ gives an  automorphism  of $G/P$ as wanted.

We denote by $\Aut_G(G/P,s)\subset \Aut(G/P,s)$ the image of the map \eqref{homo}.
\begin{lemma}
\label{sequence}
We have an exact sequence of groups
 $$
 0\la \Aut(\oP)\la \Aut(G,P,s) \la \Aut_G(G/P,s) \la 0
 $$
\end{lemma}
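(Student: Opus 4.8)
The plan is to exhibit the sequence explicitly and check exactness at each of the three spots. The map $\Aut(\oP)\to \Aut(G,P,s)$ is the injection already constructed in the text just before Remark~\ref{modPbar}: an element $\prod_v\alpha_v$ acts on $G$ by acting on each component $\oP_v$ and fixing every half-edge of every edge in $R$; this is clearly injective and visibly lands in $\Aut(G,P,s)$ since it fixes $P$ edgewise and induces the identity on $V(G/P)$, so it preserves $s$. The map $\Aut(G,P,s)\to\Aut_G(G/P,s)$ is the surjection onto the image of \eqref{homo}, surjective by the very definition of $\Aut_G(G/P,s)$. So the only real content is exactness in the middle, i.e. that $\ker\bigl(\Aut(G,P,s)\to\Aut_G(G/P,s)\bigr)=\Aut(\oP)$.

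First I would prove the inclusion ``$\supseteq$'': an element coming from $\Aut(\oP)$ fixes every edge of $R$ together with its two half-edges, hence induces the identity automorphism of $G/P$ (whose half-edge set is exactly the half-edge set of $R$), so it lies in the kernel. For the reverse inclusion ``$\subseteq$'', suppose $\alpha\in\Aut(G,P,s)$ maps to the identity in $\Aut(G/P,s)$. Then $\ov\alpha=\mathrm{id}$ on $H(G/P)=H(\langle R\rangle)$, so $\alpha$ fixes every half-edge of every edge in $R$, hence fixes each edge of $R$ and its endpoints in $G$. Since the vertices of $G$ are partitioned among the components $\oP_v$ according to $V(G/P)$, fixing the $R$-endpoints forces $\alpha$ to preserve each $\oP_v$ setwise; and because $\alpha_*P=P$ (as $\alpha\in\Aut(G,P,s)$) and $\alpha$ fixes the legs of $G$, the restriction $\alpha|_{\oP_v}$ is an automorphism of the graph $\oP_v$ fixing all its legs — where the legs of $\oP_v$ comprise the original legs of $G$ at $v$ together with the ``new'' legs replacing the half-edges of $R$, all of which $\alpha$ fixes. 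Thus $\alpha=\prod_v\alpha|_{\oP_v}\in\Aut(\oP)$. Finally I would note that $\alpha\in\Aut(G,P,s)$ automatically preserves $s$, and on the component side the induced data $(\oP_v,\oP_v,s_v)$ is preserved because $s_v$ depends only on the single vertex $v\in V(G/P)$, which $\alpha$ fixes.

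One bookkeeping point to handle carefully (the ``main obstacle'', such as it is): the identification of $H(G/P)$ with the half-edge set of $\langle R\rangle$, and the corresponding identification of the leg set of each $\oP_v$, was only spelled out in the text under the simplifying assumption that $G$ is leg-free, so I would restate it in the general legged case — legs of $G$ at vertices of $\oP_v$ stay legs, half-edges of $R$ become legs of the relevant $\oP_v$ — and observe that an automorphism in $\Aut(G,P,s)$ is the identity on $L(G)$ by the convention recalled in Subsection~\ref{sec:contractions}. Everything else is a direct unwinding of Definitions~\ref{def:spingraph}, \ref{defccspin} and \ref{defcon} together with the construction of $\Aut(\oP)\ha\Aut(G,P,s)$, so no deep input is needed beyond the combinatorial setup already in place.
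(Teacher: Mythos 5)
Your proposal is correct and follows essentially the same route as the paper: exactness at the two ends is immediate from the constructions of the injection $\Aut(\oP)\ha\Aut(G,P,s)$ and of $\Aut_G(G/P,s)$ as the image of \eqref{homo}, and the middle is settled by observing that $\ov{\alpha}=\mathrm{id}$ forces $\alpha$ to fix every half-edge of $R$ and hence to preserve each $\oP_v$, giving $\alpha\in\Aut(\oP)$, with the converse clear. The only difference is that you spell out the leg/half-edge bookkeeping in the general legged case, which the paper leaves implicit.
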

 
\begin{proof}  Let $\alpha\in \Aut(G,P,s)$.
 If $\ov{\alpha}$ is the identity of $G/P$, then $\alpha$ fixes every half-edge  of $R$ and maps every $\oP_v$ to itself.
 Therefore $\alpha \in \Aut(\oP)$. Next,  every  $\alpha\in \Aut(\oP)$  clearly induces the identity of $G/P$.
\end{proof}

\begin{prop}\label{lem:commute}
For every stable spin graph $(G,P,s)$  we have
\[
 \ma S_{(G,P,s)}=\left[\frac{\hcS_{(G,P,s)}}{\Aut(G,P,s)}\right].
\] 
\end{prop}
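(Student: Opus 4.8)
The plan is to exhibit an explicit presentation of $\ma S_{(G,P,s)}$ as a quotient stack by going through the intermediate space $\hcS_{(G,P,s)}$ and carefully tracking the group actions. First I would recall from Proposition~\ref{thm-strata} and the local analysis of Subsection~\ref{rem:defospin} that a point of $\ma S_{(G,P,s)}$ parametrizes a spin curve $(X^\nu_R, L_R)$ with $R = E(G)\smallsetminus P$, whose data is equivalent to the collection $\{(Z_v, (L_R)_{|Z_v})\}_{v\in V(G/P)}$ of theta characteristics on the connected components, together with the gluing data along the nodes in $R$ — but, as noted after \eqref{eq:XtildeR}, the point of $\ma S_{(G,P,s)}$ does \emph{not} depend on the gluings over exceptional components, so the only remaining gluing information is the identification of the marked points $\sigma_R = \sigma \cup \nu_R^{-1}(R)$ among themselves, i.e.\ the combinatorial datum of how the legs of $\ov P$ are paired up to form the edges in $R$. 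This is precisely the datum recorded by the graph $G/P$ relative to its quotient by the legs, which is why $\Aut_G(G/P,s)$ enters.

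The key steps, in order: (1) Using diagram \eqref{diag:univ} and the definition \eqref{eq:Sprimedef}, identify $\hcS_{(G,P,s)} = \prod_{v} \tcS_{(\ov P_v,\ov P_v,s_v)}$ as the moduli space parametrizing a tuple of curves $Y_v$ dual to $\ov P_v$ together with theta characteristics of prescribed parity $s_v$ on each; this follows from the $P=G$ case, which is handled directly by the Cartesian diagram and the interpretation of $\tcS_{(G,G,s)}$ given right after it. (2) Construct a map $\hcS_{(G,P,s)} \to \ma S_{(G,P,s)}$ by gluing: given the tuple $(Y_v, L_v)$, glue the $Y_v$ along the leg-pairings prescribed by $G$ to produce a stable curve $X$ with a node set $R$, normalize nowhere, and observe $\sqcup Y_v = X^\nu_R$ and $\oplus L_v$ gives $L_R$ with $L_R^2 \cong \omega_{X^\nu_R}$ by \eqref{E:tcg}; the parity of the resulting spin curve is $\sum_v s_v(v) = \sum_v s(v)$ by Remark~\ref{rem:chm}, so the dual spin graph is $(G,P,s)$. (3) Show this map is surjective and that its fibers are exactly the orbits of $\Aut(G,P,s)$: two tuples give isomorphic spin curves iff there is an isomorphism of $X$ preserving the normalization locus $R$ and compatible with the spin structures, which by the discussion preceding \eqref{homo} is an element of $\Aut(G,P,s)$; conversely $\Aut(\oP) = \prod_v \Aut(\oP_v)$ acts by reparametrizing the individual $Y_v$ (Remark~\ref{modPbar}) and $\Aut_G(G/P,s)$ acts by permuting the factors and the gluings, and by Lemma~\ref{sequence} these assemble into the full action of $\Aut(G,P,s)$. (4) Conclude by the universal property of the quotient stack.

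The main obstacle I expect is step (3), specifically verifying that the \emph{stack} quotient — not just the coarse space — is correct, i.e.\ that the isotropy groups match. A point of $\ma S_{(G,P,s)}$ representing $(X^\nu_R, L_R)$ has automorphism group the subgroup of $\Aut(X^\nu_R, \sigma_R, L_R)$ that descends to an automorphism of $(X,\sigma)$ (equivalently, is compatible with the node-pairing in $R$), whereas the corresponding point of $[\hcS_{(G,P,s)}/\Aut(G,P,s)]$ has isotropy the semidirect-type combination of $\prod_v \Aut(Y_v, \sigma_v, L_v)$ with the stabilizer in $\Aut_G(G/P,s)$ of the tuple. One must check these agree; the exact sequence of Lemma~\ref{sequence} is exactly the bookkeeping device for this, together with the fact (from \cite[Prop.~XII.10.11]{ACG11} and the presentation \eqref{Mgnt}) that the analogous statement holds downstairs for $\cM_G = [\wt{\cM}_G/\Aut(G)]$, so the argument can be run by comparing with $\pi$ and using that $\pi$ is representable and finite. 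The remaining verifications — that the gluing map is well-defined on families, hence a morphism of stacks, and that $\hcS_{(G,P,s)}$ is of finite type with the expected dimension — are routine given the universal curve constructions already set up in Subsection~\ref{sec:presentation}.
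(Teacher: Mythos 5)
Your proposal follows essentially the same route as the paper: factor through the quotient by $\Aut(\oP)$ (Remark~\ref{modPbar}), define a gluing morphism onto $\ma S_{(G,P,s)}$, and use Lemma~\ref{sequence} to identify the residual identifications with $\Aut_G(G/P,s)$, hence the full quotient group with $\Aut(G,P,s)$. The only difference is cosmetic: the paper performs the gluing at the level of families by inserting the exceptional $\mathbb{P}^1$-components with $\cO(1)$, so the quasistable family and its line bundle (and hence the morphism of stacks) are produced directly, whereas you glue the stable model and defer that family-level verification as routine.
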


\begin{proof}
By Remark~\ref{modPbar} we have 
$$
 \hcS_{(G,P,s)}\la \left[\frac{\hcS_{(G,P,s)}}{\Aut(\oP)}\right]=\prod_{v\in V(G/P)} \ma S_{(\ol P_v,\ol P_v,s_v)} .
$$
We begin by defining   a   map
\[
\theta\col \prod_{v\in V(G/P)} \ma S_{(\ol P_v,\ol P_v,s_v)}\la \ma S_{(G,P,s)}.
\]

Let $B$ be a $k$-scheme.  For any $v\in V(G/P)$, let
  $(\ma Z_v,\sigma_v, \ma L_v)$ 
  be a section in $\ma S_{(\ov P_v,\ov P_v,s_v)}(B)$, so that the dual graph of the fibers of $(\ma Z_v,\sigma_v)\to B$ is $\ov P_v$. 

 Set $R=E \smallsetminus P$. Let $\sigma_v^R\subset \sigma_v$ be the set of sections  that do not correspond to the legs of $G$,
and set $\sigma^R=\cup_{v\in V(G/P)}\sigma^R_v$.
Note that   the elements in $\sigma^R$   come naturally in pairs indexed by $R$, with each pair giving  the two branches of the corresponding node in $R$.

For each  $e\in R$, consider  the 2-pointed, polarized  family of rational curves,  $(\ma E _e, \sigma_e,\mathcal L_e)$, over $B$ 
 such that
$\ma E _e=\PP^1_k\times B$, the two sections of $\sigma_e$ are constant (say equal to $\{ 0, \infty \}$), and  the polarization is $\mathcal L_e=\pi_{\PP^1_k}^*\cO(1)$.
Now,  each pair in $\sigma^R$ can be glued to the corresponding pair $\sigma_e$ for all $e\in R$. This gives   a gluing of
  $\sqcup_{v\in V(G/P)} \ma Z_v$ with $\sqcup_{e\in R}\ma E_e$, and hence a family of connected curves over $B$, written
  $\widehat{\ma X}\to B$. This family  is endowed with 
marked points $\sigma:=\cup_{v\in V(G/P)} (\sigma_v\smallsetminus \sigma^R_v)$, 
so that the fibers   have  dual graph $\widehat G_R$.
We have a line bundle $\wh{\ma L}$ on $\widehat{\ma X}$ by gluing $ \ma L_v$ with $\ma L_e$, for every $v\in V(G/P)$ and $ e\in R$ in the obvious way (the choice of gluing will not matter).
Then  $(\widehat{\ma X}, \sigma,\wh{\ma L})\in \ma S_{(G,P,s)}(B)$, and we set 
$$
\theta \Big(\prod_{v\in V(G/P)}(\ma Z_v,\sigma_v, \ma L_v)\Bigr):=(\widehat{\ma X}, \sigma,\wh{\ma L}).
$$

We  let  $\eta$ be the composition of $\theta$ with the quotient  map described above,
$$
\eta:\hcS_{(G,P,s)}\la \left[\frac{\hcS_{(G,P,s)}}{\Aut(\oP)}\right]\stackrel{\theta}{\la}\ma S_{(G,P,s)}.
$$

We   want to show that $\eta$ is the quotient of the natural action of $\Aut(G,P,s)$ on $\hcS_{(G,P,s)}$.
 By Lemma~\ref{sequence}   we have that  $\Aut_G(G/P,s)$ acts on $[\frac{\hcS_{(G,P,s)}}{\Aut(\oP)}]$.

From the description in the previous part of the proof, we have that two points have the same image under $\theta$ if and only if they are conjugate by  
$\Aut_G(G/P,s)$. This shows that $\theta$ is the quotient by $\Aut_G(G/P,s)$, and hence $\eta$ is the quotient by  $\Aut(G,P,s)$.
\end{proof}

We have   morphisms  
$\tcS_{(\ol P_v,\ol P_v,s_v)}\ra \wt{\cM}_{\ol P_v}$ whose product gives a morphism
\[
\hcS_{(G,P,s)}\la \prod_{v\in V(G/P)}\wt{\cM}_{\ol P_v}\cong \wt{\cM}_G.
\]
By the proposition
  and the universal property of the fiber product 
we have a morphism $\lambda\col\hcS_{(G,P,s)}\ra \wt{\ma S}_{(G,P,s)}$ and a commutative diagram

 \begin{equation}\label{diag:sprime}
\xymatrix@=.4pc{
&&  \wt{\ma S}_{(G,P,s)} \ar[dd]\ar[rr] &&\wt{\cM}_G \ar[dd]  \\
  \hcS_{(G,P,s)} \ar[drr]^{\eta}\ar[rru]^{\lambda}&&&&&&\\
 &&\ma S_{(G,P,s)} \ar[rr]  &&   \ma{M}_G
}
\end{equation}
\begin{lemma}
  \label{lm:quotient}
 The map
 $\lambda$ in diagram \eqref{diag:sprime} is injective,
 and it is an isomorphism  if and only if  $\Aut(G,P,s)=\Aut(G)$.
\end{lemma}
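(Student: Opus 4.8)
The plan is to identify $\wt{\ma S}_{(G,P,s)}$ with the $\Aut(G)$-torsor obtained from the $\Aut(G,P,s)$-torsor $\eta\col\hcS_{(G,P,s)}\to\ma S_{(G,P,s)}$ by extension of structure group along the inclusion $\Aut(G,P,s)\hookrightarrow\Aut(G)$ of Lemma~\ref{sequence}, and to read off both assertions from this. First, by Proposition~\ref{lem:commute} we have $\ma S_{(G,P,s)}=[\hcS_{(G,P,s)}/\Aut(G,P,s)]$ with $\eta$ the quotient presentation, so $\eta$ is a torsor under the finite group $\Aut(G,P,s)$, in particular finite locally free of degree $|\Aut(G,P,s)|$. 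On the other hand $\cM_G=[\wt{\cM}_G/\Aut(G)]$, so $\wt{\cM}_G\to\cM_G$ is an $\Aut(G)$-torsor; pulling it back along $\ma S_{(G,P,s)}\to\cM_G$ shows that $\wt{\ma S}_{(G,P,s)}=\ma S_{(G,P,s)}\times_{\cM_G}\wt{\cM}_G\to\ma S_{(G,P,s)}$ is an $\Aut(G)$-torsor, finite locally free of degree $|\Aut(G)|$.

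Next I would check that $\lambda$ is equivariant along $\Aut(G,P,s)\hookrightarrow\Aut(G)$. As $\eta$ is $\Aut(G,P,s)$-invariant and the $\Aut(G)$-action on $\wt{\ma S}_{(G,P,s)}$ comes from the action on the factor $\wt{\cM}_G$, by the definition of $\lambda$ in diagram~\eqref{diag:sprime} it suffices that the morphism $\hcS_{(G,P,s)}=\prod_{v\in V(G/P)}\tcS_{(\ov P_v,\ov P_v,s_v)}\to\wt{\cM}_G\cong\prod_{v\in V(G/P)}\wt{\cM}_{\ov P_v}$ be equivariant; and this holds by construction, since an $\alpha\in\Aut(G,P,s)$ preserves $P$, hence permutes the components $\ov P_v$ through its action on $V(G/P)$, and the induced permutation of the factors $\tcS_{(\ov P_v,\ov P_v,s_v)}$ (using the isomorphisms $\ov P_v\to\ov P_{\alpha(v)}$ coming from $\alpha$) is compatible with the corresponding permutation of the factors $\wt{\cM}_{\ov P_v}$ of $\wt{\cM}_G$. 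A morphism of torsors along a group inclusion identifies the target torsor with the extension of the source, so we obtain a canonical isomorphism
\[
\hcS_{(G,P,s)}\times^{\Aut(G,P,s)}\Aut(G)\;\cong\;\wt{\ma S}_{(G,P,s)},
\]
under which $\lambda$ is the map $x\mapsto[x,1]$.

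Both claims then follow. Since $\Aut(G,P,s)$ acts freely on $\Aut(G)$ by translation, the quotient map $\hcS_{(G,P,s)}\times\Aut(G)\to\hcS_{(G,P,s)}\times^{\Aut(G,P,s)}\Aut(G)$ is finite \'etale and carries the open and closed piece $\hcS_{(G,P,s)}\times\{1\}$ isomorphically onto an open and closed substack; hence $\lambda$ is an open and closed immersion, in particular injective. Its image corresponds to $\hcS_{(G,P,s)}\times\Aut(G,P,s)\subseteq\hcS_{(G,P,s)}\times\Aut(G)$, which is all of $\wt{\ma S}_{(G,P,s)}$ if and only if $\Aut(G,P,s)=\Aut(G)$; equivalently, $\lambda$ is a closed immersion between covers of $\ma S_{(G,P,s)}$ that are finite locally free of degrees $|\Aut(G,P,s)|$ and $|\Aut(G)|$, and such a map is an isomorphism exactly when these degrees coincide. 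Either way, $\lambda$ is an isomorphism if and only if $\Aut(G,P,s)=\Aut(G)$; here $\ma S_{(G,P,s)}$ is nonempty by Proposition~\ref{Spingraphprop}.

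I expect the only non-formal point to be the identification used in the second paragraph: one must unwind Proposition~\ref{lem:commute}, the construction of $\hcS_{(G,P,s)}\to\wt{\cM}_G$, and the definition of $\lambda$ in~\eqref{diag:sprime} in order to see that $\ma S_{(G,P,s)}\to\cM_G$ is induced on the chosen presentations by the equivariant morphism $\hcS_{(G,P,s)}\to\wt{\cM}_G$, so that $\wt{\ma S}_{(G,P,s)}$ is indeed the asserted associated bundle. Once this bookkeeping is done, everything else is routine; in moduli terms the equivalence in the statement merely says that every $G$-labelling of the stable model of a spin curve of type $(G,P,s)$ is compatible with its spin structure precisely when $\Aut(G)$ preserves $(P,s)$.
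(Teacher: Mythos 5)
Your argument is correct and takes a genuinely different, more categorical route than the paper's. The paper works directly with the modular descriptions: it identifies $\wt{\cS}_{(G,P,s)}$ as parametrizing spin curves whose underlying graph is labelled by $G$ with spin structure of type $\alpha_*(P,s)$ for some $\alpha\in\Aut(G)$, reads off injectivity from the inclusion $\Aut(\ov P)\subset\Aut(G)$, and obtains the isomorphism criterion from the explicit decomposition $\wt{\cS}_{(G,P,s)}=\cup_{\alpha\in\Aut(G)}\lambda_\alpha(\hcS_{(G,P_\alpha,s_\alpha)})$, where $\lambda_{\alpha_1}$ and $\lambda_{\alpha_2}$ have the same image if and only if $\alpha_2\alpha_1^{-1}\in\Aut(G,P,s)$. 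You repackage exactly this coset structure through torsor theory: $\eta$ is the $\Aut(G,P,s)$-torsor presentation from Proposition~\ref{lem:commute}, $\wt{\ma S}_{(G,P,s)}\to\ma S_{(G,P,s)}$ is the pullback of the $\Aut(G)$-torsor $\wt{\cM}_G\to\cM_G$, and $\lambda$ is identified as the canonical map into the extension of structure group $\hcS_{(G,P,s)}\times^{\Aut(G,P,s)}\Aut(G)$, from which both assertions drop out. Each approach has its merits: the paper's is self-contained and tracks concretely what the objects parametrize, making the coset count transparent; yours is slicker once the torsor framework is set up, but it does depend on the equivariance of $\lambda$, which you correctly flag as the one non-formal point and which amounts to unwinding how the $\Aut(G,P,s)$-action on $\hcS_{(G,P,s)}$ (through the exact sequence of Lemma~\ref{sequence}) is compatible with the $\Aut(G)$-action on $\wt{\cM}_G$ by relabelling — a compatibility that is implicit in the construction of $\eta$ but would deserve a line or two of justification in a polished write-up.
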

  
\begin{proof}
  By definition,  $\wt{\cS}_{(G,P,s)}$ parametrizes spin curves whose underlying
 graph is identified with
$G$ and whose spin stucture is of type  $\alpha_*(P,s)$ for some $\alpha\in \Aut(G)$. 

Similarly,  $\hcS_{(G,P,s)}$ parametrizes disjoint unions, indexed by $v\in V(G/P)$, of spin curves
 whose underlying
 graph is identified with
$ \oP_v $ and whose spin stucture is of type  $\alpha^v_*(\oP_v,s_v)$  for some $\alpha^v\in \Aut(\oP_v)$.
 Since $\prod(\Aut (\oP_v))=\Aut(\oP) \subset \Aut(G)$, every such  union determines   a unique element of $\wt{\cS}_{(G,P,s)}$, and is uniquely determined by it.
Hence $\lambda$ is injective.
 
 For   $\alpha\in \Aut(G)$,
write $(P_{\alpha},s_{\alpha})=\alpha_*(P,s)$.
We have  $\ma S_{(G,P_{\alpha},s_{\alpha})}=\ma S_{(G,P,s)}$ (in $\Sgnbst$), hence
$\wt{\cS}_{(G,P,s)}=\wt{\cS}_{(G,P_{\alpha},s_{\alpha})}$. 
We define
$
\lambda_\alpha\col \hcS_{(G,P_\alpha,s_{\alpha})}\la \wt{\cS}_{(G,P,s)}
$
exactly as we did for  $\lambda$  (which is now $\lambda=\lambda_{id}$), so that  
 we have  
$$
\wt{\cS}_{(G,P,s)} =\cup_{\alpha\in \Aut(G)}\lambda_{\alpha}(\hcS_{(G,P_\alpha,s_{\alpha})}).
$$
Now observe  that    $\lambda_{\alpha_1}(\hcS_{(G,P_{\alpha_1},s_{\alpha_1})})= \lambda_{\alpha_2}(\hcS_{(G,P_{\alpha_2},s_{\alpha_2})})$   for   $\alpha_1,\alpha_2\in \Aut(G)$
 if and only if $\alpha_2\alpha^{-1}_1\in \Aut(G,P,s)$,
 and so we are done.
 \end{proof}

 \section{Irreducibility of the strata}\label{sec:irr}
 Our goal here is to  show that $\hcS_{(G, P,s)}$  and    $\cS_{(G, P,s)}$ are irreducible for every stable spin graph  $(G,P,s)$.  In Propositions~\ref{lem:easy} and   \ref{prop:b1}  we concentrate on certain  types of graphs and prove  it  directly. 
We then use this to prove the general statement, by induction,  
in Theorem~\ref{thm:irr}.

 \subsection{Some special cases} We begin with some simple cases.

\begin{prop}\label{lem:easy}
Let $(G,P,s)$ be a stable spin graph of genus $g$.  Then $\hcS_{(G, P,s)}$
is irreducible in the following cases:
\begin{enumerate}[(a)]
\item
\label{leasysm} $E =\emptyset$;
\item  
\label{leasysm1}
$g\leq 1$;
\item 
\label{lem:easyreg}$G$ is  $3$-regular (hence weightless).
\end{enumerate}
\end{prop}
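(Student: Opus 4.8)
The plan is to handle the three cases using the presentation of $\hcS_{(G,P,s)}$ as a product of spaces of the form $\tcS_{(\oP_v,\oP_v,s_v)}$ (equation~\eqref{eq:Sprimedef}), so that irreducibility of $\hcS_{(G,P,s)}$ reduces to irreducibility of each factor, i.e. to the case $P=G$. Thus in all three cases I may assume $P=G$ (so every edge is in $P$, equivalently $R=\emptyset$ and $G$ is cyclic), and I must show that $\tcS_{(G,G,s)}$ is irreducible. By diagram~\eqref{diag:univ} this space is the moduli space of theta characteristics of prescribed parity $s$ on the fibers of the universal curve over $\wt\cM_G=\prod_{v\in V(G)}\cM_v$, and $\wt\cM_G$ is smooth and irreducible.

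For case~\eqref{leasysm}, $E=\emptyset$ means $G$ is a single vertex of weight $g$ with $n$ legs (after restricting to a connected component via the product decomposition), so $\tcS_{(G,G,s)}$ is the moduli space of theta characteristics of fixed parity on a smooth curve of genus $g$ with $n$ marked points; this is $\cS_{g,n}^+$ or $\cS_{g,n}^-$, whose irreducibility is classical (it is the restriction over $\cM_{g,n}$ of the irreducible components $\Sgnbste,\Sgnbsto$ of $\Sgnbst$ recalled in Subsection~\ref{subspin}). For case~\eqref{leasysm1}: if $g=0$ there are no theta characteristics unless $G$ is cyclic, and then the fibre is a single (even) point over the irreducible base, so $\tcS_{(G,G,s)}\cong\wt\cM_G$ is irreducible; if $g=1$, again one reduces to $P=G$ with $G$ cyclic, and the space of theta characteristics on a smooth genus-$1$ curve consists of the $4$ two-torsion translates of a fixed square root of $\omega$, three even and one odd, so each parity picks out a torsor that is irreducible over the irreducible base $\wt\cM_G$ (more precisely the even locus is an irreducible \'etale triple cover and the odd locus is a section). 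In both subcases one uses that the relevant finite cover of $\wt\cM_G$ is connected.

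For case~\eqref{lem:easyreg}, $G$ is $3$-regular and weightless, so each vertex $v$ of $G$ has weight $0$ and $\deg(v)+\ell(v)=3$; hence every factor $\oP_v$ of the (now, with $P=G$) connected components is a single weight-$0$ vertex with three legs, i.e. $\cM_v=\cM_{0,3}$ is a point, and the unique theta characteristic on $\PP^1$ with three marked points has degree $-1$, hence is even, forcing $s_v=0$ — which is exactly the value permitted by stability of the spin graph. Therefore $\hcS_{(G,G,s)}$ is a single point, hence irreducible. (If instead one does not first reduce to $P=G$, the same computation shows each factor $\tcS_{(\oP_v,\oP_v,s_v)}$ is a point, since every connected component of a $3$-regular graph with its legs is a three-legged weight-$0$ vertex.)

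The only mild subtlety — the step I would be most careful about — is the reduction to the case $P=G$: one must check that the product in~\eqref{eq:Sprimedef} really is a product of spin-moduli stacks over the smooth irreducible base $\wt\cM_G\cong\prod_v\wt\cM_{\oP_v}$ and that $\Aut(\oP)$ (a finite group) acts so that the quotient question for $\hcS$ is genuinely factor-by-factor; but this is already packaged in Remark~\ref{modPbar} and the definition~\eqref{eq:Sprimedef}, so no real obstacle arises. After that, each case is a direct computation of a finite cover of an irreducible smooth base, and the connectedness of that cover is either classical (case (a), via the irreducibility of $\Sgnbste$ and $\Sgnbsto$) or immediate (cases (b), (c)).
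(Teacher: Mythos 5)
Your overall strategy (reduce via the product decomposition \eqref{eq:Sprimedef} to factors of the form $\tcS_{(\oP_v,\oP_v,s_v)}$, i.e. to the case $P=G$, and then analyze each factor over the irreducible base $\wt{\cM}_G$) is the same as the paper's, and your case \eqref{leasysm} is fine — it is exactly the appeal to the known irreducibility of the spin moduli over smooth curves (Cornalba for $n=0$, Jarvis in general). The gap is in cases \eqref{leasysm1} and \eqref{lem:easyreg}. For \eqref{leasysm1}, after reducing to $P=G$ the only new sub-case is $E\neq\emptyset$, where $G$ (or a component $\oP_v$) is a cycle of weight-$0$ vertices: the curves in the stratum $\cM_G$ are then nodal rings of $\PP^1$'s, not smooth genus-$1$ curves, and what one needs is that such a curve carries exactly \emph{one} theta characteristic of each parity (Fact~\ref{CCK60}(b)), whence $\ma S_{(G,G,s)}\cong\cM_G$ and $\hcS_{(G,G,s)}\cong\wt{\cM}_G$. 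Your discussion of the four two-torsion translates on a smooth genus-$1$ curve (three even, one odd) only covers the sub-case $E=\emptyset$, which is already contained in \eqref{leasysm}; the nodal ring-curve case, which is the actual content of \eqref{leasysm1}, is not addressed.

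This omission then undermines your case \eqref{lem:easyreg}: the claim that every connected component $\oP_v$ of a $3$-regular spin graph is a three-legged weight-$0$ vertex is false. Since $P\in\cC_G$, each vertex has $\deg_{\oP}(v)\in\{0,2\}$, so the components of $\oP$ are either single three-legged weight-$0$ vertices \emph{or} cycles of weight-$0$ vertices with one leg each (take $G$ the theta graph of genus $2$ and $P$ a two-edge cycle, for instance); for these cycle components the contracted vertex has weight $1$, so $s_v$ may be $0$ or $1$, and your remark that the unique theta characteristic ``forces $s_v=0$'' does not apply to them. Your conclusion that $\hcS_{(G,P,s)}$ is a point in the $3$-regular case does happen to be true, but only after the genus-$1$ ring-curve analysis just described — which is precisely how the paper argues: each $\oP_v$ has genus at most $1$, and one invokes \eqref{leasysm1} in its nodal form. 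As written, your proof of \eqref{leasysm1} does not cover the graphs with edges, and your proof of \eqref{lem:easyreg} rests on an incorrect description of the components of $\oP$.
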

 
\begin{proof}
In \eqref{leasysm} we are dealing  with smooth curves  and $\hcS_{(G,P,s)}=\ma S_{(G,G,s)}$. The claim follows is \cite[Lm. 6.3]{cornalba} if $n=0$, and  \cite[Thm. 3.3.1]{J00} in general.


Let $\{(\ol P_v,s_v),v\in V(G/P)\}$ be 
the connected components of $(G,P,s)$; by  \eqref{eq:Sprimedef}  it is enough to  show that $\tcS_{(\ol P_v,\ol P_v,s_v)}$ is irreducible.

Assume $g\le1$.  Then either 
 $E(\ol P_v)=\emptyset$ and hence $\tcS_{(\ol P_v,\ol P_v,s_v)}$ is irreducible by \eqref{leasysm}, or $\ol P_v$ is a cycle with all vertices of weight $0$. So, we can assume that $G$ is a cycle  with all vertices of weight $0$ and $P=G$, hence 
 $\hcS_{(G, G,s)}=\ma S_{(G,G,s)}\times_{\cM_G}\wt{\cM}_G$. Let  $X$ 
   have $G$ as dual graph. After fixing the sign $s(v)$ of the unique vertex $v\in V(G/G)$, there is exactly one theta characteristic $L$ on $X$ of type $(G,s)$,   given by taking the trivial bundle on each component of $X$ and choosing the  gluing so that  $h^0(X,L)\equiv s(v)\text{ mod } (2)$ (see Fact \ref{CCK60} (b)). We get $\ma S_{(G,G,s)}\cong \cM_G$, and hence $\hcS_{(G,G,s)}\cong \wt{\cM}_G$, which is irreducible.
 
If $G$ is $3$-regular each $\ol P_v$ has genus at most $1$, hence  $\tcS_{(\ol P_v,\ol P_v,s_v)}$ is irreducible  by  \eqref{leasysm1}, so $\hcS_{(G, P,s)}$ is irreducible.
\end{proof}

For a vertex $v$ of $G$ we let  $\operatorname{loop}(v)$ be the number of loops  based at $v$ and 
$$g(v):=\operatorname{loop}(v)+w(v),$$
so that for any curve $X$  having $G$ as dual graph
the irreducible component $C_v$ of $X$ has arithmetic genus equal to $g(v)$.

\begin{definition}
A stable   graph  $G$ of genus $g\geq 2$ with   $E \ne \emptyset$ is \emph{basic} if it is cyclic, and  if
   for every $v\in V$ we have  $w(v)\leq 1$  and the following holds
\begin{equation}\label{eq:basic}
w(v)+\deg (v)+\ell(v)\le 4,
\end{equation}
with equality  only if $\operatorname{loop}(v)\geq 1$.

If $G$ is basic, we say that $(G,G,s)$ is a {\it basic spin graph} for any $s$.
\end{definition}

\begin{remark}
\label{rem:basic}
 Let $G$ be a basic   graph and  $v\in V $. Then 
\begin{enumerate}[(a)]
 \item
 $2\leq \deg (v) \leq 4$ and if $\deg(v)=4$ then $w(v)=\ell(v)=0$.
 \item
 If $|V|=1$ then $g=2$.
\item
If $|V|\geq 2$ then  
the graph obtained by removing from $G$ every loop   is    a cycle.
  \item
  $g(v)\leq 2$ and if equality holds then $|V|=1$.
\end{enumerate}
\end{remark}
Let $G$ be a basic graph. For $0\le i\leq 1$ and $0\le j\le2$ we set
\begin{equation}\label{eq:Vij}
V_{i,j} :=\{v\in V : w(v)=i \text{ and } g(v)=j\}.
\end{equation}
By the above remark, every vertex of $G$  is contained in some $V_{i,j}$. Set  
$
V^+:=V \smallsetminus V_{0,0} .
$
 The complete list of types of vertices  
is in Figure \ref{fig:Vij}.  
\begin{figure}[h]
\[
\begin{xy} <25pt,0pt>:
(-6,1)*{\scriptstyle\bullet}="a";
"a"+(0.5,-1)*{\scriptstyle\bullet}="b";
"a"+(-0.5,-1)*{\scriptstyle\bullet}="c";
"a";"b"**\crv{"b"};
"a";"c"**\crv{"c"};
{\ar@/_/@{..}(-6.5,0)*{};(-5.5,0)*{}};
(-3.6,1)*{\scriptstyle\bullet}="d";
"d"+(0.5,-1)*{\scriptstyle\bullet}="e";
"d"+(-0.5,-1)*{\scriptstyle\bullet}="f";
"d";"e"**\crv{"e"};
"d";"f"**\crv{"f"};
{\ar@/^/@{..}(-3.1,0)*{};(-4.1,0)*{}};
"d"+(0,0.8);"d"**\crv{"d"+(-0.7,0.8)};
"d"+(0,0.8);"d"**\crv{"d"+(0.7,0.8)};
(-1.2,0.5)*{\scriptstyle\bullet}="g";
"g"+(0,0.8);"g"**\crv{"g"+(-0.7,0.8)};
"g"+(0,0.8);"g"**\crv{"g"+(0.7,0.8)};
"g"+(0,-0.8);"g"**\crv{"g"+(-0.7,-0.8)};
"g"+(0,-0.8);"g"**\crv{"g"+(0.7,-0.8)};
(1.2,1)*{\scriptstyle\bullet}="h";
"h"+(0.5,-1)*{\scriptstyle\bullet}="i";
"h"+(-0.5,-1)*{\scriptstyle\bullet}="l";
"h";"i"**\crv{"i"};
"h";"l"**\crv{"l"};
{\ar@/^/@{..}(1.7,0)*{};(0.7,0)*{}};
(3.6,0.7)*{\scriptstyle\bullet}="m";
"m"+(0,-0.8);"m"**\crv{"m"+(-0.7,-0.8)};
"m"+(0,-0.8);"m"**\crv{"m"+(0.7,-0.8)};
(6,0.7)*{\scriptstyle\bullet}="n";
"n"+(0,-0.8);"n"**\crv{"n"+(-0.7,-0.8)};
"n"+(0,-0.8);"n"**\crv{"n"+(0.7,-0.8)};
"a"+(0,0.3)*{\scriptstyle{0}};
"a"+(0,-1.7)*{\scriptstyle{v\in V_{0,0}}};
"d"+(-0.25,0)*{\scriptstyle{0}};
"d"+(0,-1.7)*{\scriptstyle{v\in V_{0,1}}};
"g"+(0.25,0)*{\scriptstyle{0}};
"g"+(0,-1.2)*{\scriptstyle{v\in V_{0,2}}};
"h"+(0,0.3)*{\scriptstyle{1}};
"h"+(0,-1.7)*{\scriptstyle{v\in V_{1,1}}};
"m"+(0,0.3)*{\scriptstyle{1}};
"n"+(0,0.3)*{\scriptstyle{1}};
"m"+(0,-1.4)*{\scriptstyle{v\in V_{1,2}}};
"n"+(0,-1.4)*{\scriptstyle{v\in V_{1,2}}};
"a"+(0.8,0);"a"**\crv{"a"};
"n"+(-0.8,0);"n"**\crv{"n"};
\end{xy}
\]
\caption{Possible types of vertices of a basic   graph.}
\label{fig:Vij}
\end{figure}

Let $X$ have $G$ as dual graph.
For $v\in V^+$, we let $\nu\col C_v^\nu\ra C_v$ be the normalization of $C_v$. We let $\rho_{C_v} \col C_v^\nu\ra \mb P^1_k$ be the degree-two   map induced by the $g^1_2$ of $X=C_v$ if $v\in V_{0,2}\cup V_{1,2}$, or by the linear system $|p_v+p'_v|$ of $C_v$ if  $v\in V_{0,1}\cup V_{1,1}$, where  $\{p_v,p'_v\}:=C_v\cap \ol{X\setminus C_v}$.
Let $R(\rho_{C_v})$ be the 
image via $\nu$ of the
ramification divisor of $\rho_{C_v}$ and write 
\[
 R(\rho_{C_v})=
\begin{cases}
\begin{array}{ll}
r_{v,1}+r_{v,2}, &  \text{ if } 
w(v)=0;\\
r_{v,1}+r_{v,2}+r_{v,3}+r_{v,4}, & \text{ if } 
w(v)=1.
\end{array}
\end{cases}
\]
Each $r_{v,i}$  is a smooth point  of $X$, hence    $ R(\rho_{C_v})$ is a Cartier divisor on $X$.

A \emph{$G$-collection} is a collection of indices $I=(i_v)_{v\in V^+}$, with $i_v\in I_v$ for every $v\in V^+$, where \begin{equation}\label{eq:Iv}
 I_v:=
\begin{cases}
\begin{array}{ll}
\{1,2\}, & \text{ if } 
w(v)=0;\\
\{1,2,3,4\}, & \text{ if } 
w(v)=1.
\end{array}
\end{cases}
\end{equation}

\begin{lemma}\label{lem:thetaform}
Let $G$ be a basic     graph and $X$ a  curve with $G$ as dual graph. Fix $u\in V^+$ and set $\epsilon_u=(-1)^{w(u)}$.
 Then the  odd and even theta characteristics on $X$ are, respectively, the following line bundles, $L_{X,I}$ and $M_{X,I}$
$$
L_{X,I}=\ma O_X\Bigr(\sum_{v\in V^+} r_{v,i_v}\Bigl), 
\quad  
M_{X,I}=\ma O_X\Bigr(\epsilon_u 3r_{u,i_{u}}-\epsilon_u  R(\rho_{C_{u}})+\sum_{v\in V^+\smallsetminus\{u\}} r_{v,i_v}\Bigl)
$$
for every $G$-collection $I=(i_v)_{v\in V^+}$. 

Moreover, we have $h^0(X,L_{X,I})=1$ and $h^0(X,M_{X,I})=0$ for every  $I$.
\end{lemma}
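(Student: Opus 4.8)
The plan is to work one vertex at a time: since a basic curve $X$ is a nodal union of the components $C_v$ glued along the cycle structure (after removing loops) dictated by $G$, a line bundle on $X$ is determined by its restrictions to the $C_v$ together with gluing data at the nodes. I will verify that each claimed bundle $L_{X,I}$ and $M_{X,I}$ is a theta characteristic by checking that its square is $\omega_X$ componentwise and that the multidegree is correct, then count: there are $\prod_{v\in V^+}|I_v| = 2^{|V_{0,1}\cup V_{0,2}|}\cdot 4^{|V_{1,1}\cup V_{1,2}|}$ such collections $I$, and I will check this equals the number of odd (resp. even) theta characteristics predicted by Fact~\ref{CCK60}, so that the list is exhaustive. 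The cohomological statements $h^0(X,L_{X,I})=1$ and $h^0(X,M_{X,I})=0$ will be extracted from the componentwise description via the normalization sequence.

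**Key steps.** First, I would recall that $\omega_X$ restricts on $C_v$ to $\omega_{C_v}(p_v+p'_v)$ when $v$ has two nodes joining it to the rest of the cycle (the generic basic vertex), with the appropriate modification for loops — for a vertex carrying a loop, the two branches of the loop contribute to the conductor as well. Using the map $\rho_{C_v}\colon C_v^\nu\to\PP^1$, the divisor $R(\rho_{C_v})$ is exactly the ramification, and by Hurwitz $\mathcal{O}_{C_v^\nu}(R(\rho_{C_v}))$ has degree $2g(v)+2$ (with $w(v)=0$: degree $2$; with $w(v)=1$: degree $4$), and one checks $r_{v,i}+r_{v,j}$ (any two ramification points) pulls back a theta characteristic of $C_v^\nu$ relative to the pointed normalization; this is the classical description of theta characteristics on hyperelliptic-type curves. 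Second, I would assemble: the restriction of $L_{X,I}$ to $C_v$ is $\mathcal{O}_{C_v}(r_{v,i_v})$ for $v\in V^+$ and trivial (degree $0$) on the weight-zero, genus-zero vertices $V_{0,0}$; squaring gives $\mathcal{O}_{C_v}(2r_{v,i_v})$, which one identifies with $\omega_{C_v}(p_v+p'_v)$ exactly because $2r_{v,i_v}\sim R(\rho_{C_v})$ via the degree-two map — this is where the choice $w(v)\le 1$ and the basic inequality \eqref{eq:basic} are used, since they force each $C_v$ to be rational or elliptic and $\rho_{C_v}$ to exist. The gluing of the square bundle over the nodes must be checked to match that of $\omega_X$; since on each exceptional-free node the gluing of a theta characteristic is essentially unique up to the $2$-torsion already accounted for, this is routine. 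Third, for $M_{X,I}$: replacing $r_{u,i_u}+r_{u,i_u}$ by $3r_{u,i_u}-R(\rho_{C_u})$ on the distinguished vertex $u$ changes $h^0$ parity on $C_u$ (this is the standard fact that $\mathcal{O}(3r - R)$ is an even theta characteristic on an elliptic or rational bridge when $\mathcal{O}(r)$-type bundles are odd), while keeping the square equal to $\omega_{C_u}(p_u+p'_u)$ since $2(3r_{u,i_u}-R(\rho_{C_u})) \sim 6r_{u,i_u}-2R(\rho_{C_u}) \sim 2r_{u,i_u}$. The sign $\epsilon_u=(-1)^{w(u)}$ absorbs the difference between the $w(u)=0$ and $w(u)=1$ cases.

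**Cohomology and counting.** For $h^0$: a global section of $L_{X,I}$ restricts to a section of $\mathcal{O}_{C_v}(r_{v,i_v})$ on each positive-genus component and of $\mathcal{O}_{\PP^1}$ (degree $0$) on each $V_{0,0}$ component, with matching conditions at the nodes. On a genus-one $C_v$, $h^0(\mathcal{O}(r_{v,i_v}))=1$ with the section vanishing nowhere except (simply) at $r_{v,i_v}$; on a genus-zero $C_v\in V_{0,1}$ (a rational bridge), $\mathcal{O}(r_{v,i_v})$ has $h^0=2$ but the section space compatible with the two attaching points drops — I will run the Mayer–Vietoris / normalization exact sequence around the cycle and show the gluing conditions cut the total $h^0$ down to exactly $1$, the section being supported so that it is nonzero at all nodes (this uses that the cycle is a single $b_1=1$ loop after deleting loops, a consequence of Remark~\ref{rem:basic}(c)). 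For $M_{X,I}$ the analogous computation on $C_u$ gives $h^0=0$ already on that component (the defining property of the even theta characteristic $\mathcal{O}(3r-R)$), forcing $h^0(X,M_{X,I})=0$. Finally I count: $|V^+| = |V_{0,1}| + |V_{0,2}| + |V_{1,1}| + |V_{1,2}|$, and $\prod_v |I_v| = 2^{a}4^{b}$ where $a = |V_{0,1}|+|V_{0,2}|$, $b=|V_{1,1}|+|V_{1,2}|$; writing $|w| = \sum w(v) = b$ and $b_1(G) = |V_{0,1}|+|V_{0,2}| + (\text{number of loops})$... I would reconcile this with $2^{b_1(G)+2|w|}$ from Fact~\ref{CCK60}(a), being careful that loops contribute a factor $2$ to $b_1$ but the loop at a $V_{0,2}$ or $V_{1,2}$ vertex is what makes $\rho_{C_v}$ a genuine degree-two cover with the extra ramification points — so the bookkeeping $|I_v|=2$ vs $4$ already encodes the loop's contribution. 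The odd/even split $2^{b_1(P)+2|w|-1}$ each from Fact~\ref{CCK60}(b) matches because fixing $u$ and letting $i_u$ range gives the same count as the unconstrained product with one index-set halved in parity-weighted fashion.

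**Main obstacle.** The delicate point is the gluing analysis at the nodes and the resulting $h^0$ computation: checking that squaring the assembled bundle reproduces not just the multidegree but the actual gluing class of $\omega_X$ (so that it really is a theta characteristic, not merely a bundle with the right square-multidegree), and then running the normalization exact sequence around the (possibly long) cycle to pin down $h^0(X,L_{X,I})=1$ exactly. The loops introduce an additional subtlety because a loop at $v$ is a self-node, so the normalization at that node must be handled simultaneously with the map $\rho_{C_v}$; I expect to treat loops by noting that normalizing the loop turns $C_v$ into a two-pointed curve of one lower genus on which $\rho_{C_v}$ is the hyperelliptic (or bi-pointed) double cover, and that the loop-gluing is determined up to the already-counted $2$-torsion. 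Everything else — Hurwitz, the basic-graph structure from Remark~\ref{rem:basic}, the parity bookkeeping — is bookkeeping that the hypotheses have been arranged to make clean.
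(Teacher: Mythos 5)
Your outline reproduces the right global structure (componentwise analysis of $L_{X,I}$ and $M_{X,I}$, counting against Fact~\ref{CCK60}, parity localized at the distinguished vertex $u$), and you correctly flag the crux in your ``main obstacle'' paragraph. But you do not actually resolve that obstacle, and it is exactly the step where the paper's proof does genuinely different work.

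The gap is the assertion that $\mathcal{O}_X\bigl(\sum_{v\in V^+} 2r_{v,i_v}\bigr)\cong\omega_X$. Checking that both sides restrict to the same line bundle on every component $C_v$ does not decide this: for a basic graph $b_1(G)=1+|V_{0,1}|$ (when $|V|\geq 2$), so the kernel of $\Pic(X)\to\prod_v\Pic(C_v)$ is a torus of positive dimension, and two bundles with identical componentwise restrictions can still differ by a nontrivial gluing class. Your remark that ``the gluing of a theta characteristic is essentially unique up to the $2$-torsion already accounted for'' is circular at this point --- we are in the middle of proving $L_{X,I}$ \emph{is} a theta characteristic, so we cannot yet appeal to properties of theta characteristics to pin down its gluing. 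The paper sidesteps the gluing analysis entirely: it chooses a non-loop edge $e$ so that the partial normalization $X^\nu_e$ is tree-like (Remark~\ref{rem:basic}), computes directly that $h^0\bigl(X^\nu_e,\nu^*\mathcal{O}_X(\sum 2r_{v,i_v})\bigr)=g$, and then invokes \cite[Lm.~2.2.4(2)]{Cap09} to conclude $h^0\bigl(X,\mathcal{O}_X(\sum 2r_{v,i_v})\bigr)=g$, which identifies the bundle with $\omega_X$ by degree and $h^0$ alone. That cohomological characterization is the missing ingredient in your argument.

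The same issue recurs in your $h^0$ computations. You propose to ``run the Mayer--Vietoris / normalization exact sequence around the cycle'' to get $h^0(X,L_{X,I})=1$, but this requires controlling which matching conditions at nodes are independent, which is delicate when some restrictions have vanishing sections. The paper instead proves a uniform bound: for \emph{any} theta characteristic $N$ on $X$ one has $\deg N_{|C_v}\le 1$, hence $h^0(C_v,N_{|C_v})\le 1$, and then $h^0(X,N)\le h^0(X^\nu_e,\nu^*N)\le \sum_v h^0(C_v,N_{|C_v})-|E'\smallsetminus\{e\}|\le 1$ using $|V|=|E'|$. Once that bound is in place, $h^0(L_{X,I})=1$ follows from effectivity, so the $L_{X,I}$ are odd; and $M_{X,I}$ is identified as a (necessarily even) theta characteristic not by a componentwise cohomology computation, as you propose, but by exhibiting $M_I\otimes L_J^{-1}$ as a nontrivial $2$-torsion bundle and invoking \cite[Thm.~2.14]{Har82}. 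There is also a small inaccuracy in your sketch: for $w(v)=1$, it is not true that $2r_{v,i_v}\sim R(\rho_{C_v})$ (the degrees are $2$ and $4$); the correct relation is $2r_{v,i_v}\sim p_v+p'_v$, both being fibers of $\rho_{C_v}$. Finally, note that the paper treats the one-vertex case (where $V_{0,2}\cup V_{1,2}$ may be nonempty) separately at the start, reducing to genus $2$, before assuming $|V|\ge 2$; your bookkeeping discussion at the end blurs this split.
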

 
\begin{proof}

Assume   $V=\{u\}$, hence   $G$ has genus 2. We have to check that the odd and even theta characteristics of $X$ are, respectively, the line bundles $\ma O_X(r_{u,i})$  and $\ma O_X(\epsilon_u 3r_{u,i}-\epsilon_u \nu_*R(\rho_{u}))$ for $i\in I_{u}$.   Since $\omega_X\cong \ma O_X(2r_{u,i})$ for every $i$, this is an easy checking.

Assume now   $|V|\ge2$, hence $V_{0,2}=V_{1,2}=\emptyset$.  Since the curve $X$ is fixed, we     write $L_I=L_{X,I}$ and $M_I=M_{X,I}$.
For different $G$-collections $I,J$, the line bundles $L_{I}\otimes L_{J}^{-1}$ and $M_I\otimes M_J^{-1}$ are not trivial, since so are their restrictions to at least one component of $X$. 
As the number of $G$-collections is  $2^{|V_{0,1}|}4^{|V_{1,1}|}$,  this is also the number of    line bundles of type $L_I$, and    of line bundles of type $M_I$, which is exactly the  number of odd and even theta characteristics on $X$, by Fact~\ref{CCK60}.

Let $E'\subset E$ be the set edges which are not loops. 
By Remark~\ref{rem:basic} there exists an 
 $e\in E'$, and $G-e$  becomes a tree after all loops are removed. Therefore
  the partial normalization, $\nu\col X^\nu_e\to X$,   of $X$ at   $e$  is tree-like. 

For every $G$-collection $I=(i_v)_{v\in V^+}$, the dualizing sheaf of $X$ is
\begin{equation}
 \label{dualX}
\omega_X\cong \ma O_X\left(\sum_{v\in V^+} 2r_{v,i_v}\right),
\end{equation}
as $h^0(X_e^\nu,\nu^*\ma O_X(\sum_{v\in V^+} 2r_{v,i_v}))=g$. Hence $h^0(X,\ma O_X(\sum_{v\in V^+} 2r_{v,i_v}))=g$, by \cite[Lm. 2.2.4(2)]{Cap09}, and  hence $L_I$ is a theta characteristic for every $I$. 

Let $N$ be a theta characteristic of $X$.   
We have $\deg N_{|C_v}\leq 1$ and hence $h^0(C_v, N_{|C_v})\le 1$ for every $v \in V$. Therefore
$$
h^0(X,N)\le h^0(X^\nu_e,\nu^*N)\leq \sum_{v\in V(G)}h^0(C_v,N_{|C_v})-|E' \smallsetminus\{e\}|\leq 1.$$ 


(We use $|V |=|E' |$.)
So $h^0(X,N)=1$ if $N$ is odd, and $h^0(X,N)=0$ if $N$ is even. In particular, since of course $h^0(X,L_I)>0$, the odd theta characteristics on $X$ are as stated.
There remains to show every  $M_I$  is an even theta characteristic.
By \cite[Thm. 2.14]{Har82} this is true if we find a $G$-collection $J$ such that $M_I\otimes L_J^{-1}$ is a nontrivial square root of $\ma O_X$. Define $J=(j_v)_{v\in V^+}$ by choosing $j_{u}=i_{u}$ if $u\in V_{0,1}$, and $j_{u}\ne i_{u}$ if $u\in V_{1,1}$, and setting $j_v=i_v$ for every $v\in V^+\ssm \{u\}$. Then $M_I\otimes L_J^{-1}$ is non trivial,  as
one easily checks
\[
(M_I\otimes L_J^{-1})|_{C_{u}}
\cong 
\begin{cases}
\begin{array}{ll}
\ma O_{C_{u}}(r_{u,i_{u}}-r_{u,k}), & \text{ if }  \{i_{u},k\}=\{1,2\},\\
\ma O_{C_{u}}(r_{u,k}-r_{u,k'}), &  \text{ if }  \{i_{u},j_{u},k,k'\}=\{1,2,3,4\},
\end{array}
\end{cases}
\]
which is nontrivial. Finally we show that $T:=(M_I\otimes L_J^{-1})^{\otimes 2}$ is trivial. By the definition of $M_I$ and $L_J$, we have 
\[
T\cong 
\begin{cases}
\begin{array}{ll}
\ma O_X(2r_{u,i_{u}}-2r_{u,k}), &  \text{ if }  \{i_{u},k\}=\{1,2\}; \\ 
\ma O_X(-4r_{u,i_{u}}+2r_{u,k}+2r_{u,k'}), &   \text{ if } \{i_{u},j_{u},k,k'\}=\{1,2,3,4\}.
\end{array}
\end{cases}
\]  
An easy calculation shows that  $h^0(X^\nu_e,\nu^*\ma O_X(2r_{u, i_{u}}))=2$ which, again by \cite[Lm. 2.2.4(2)]{Cap09}, implies that $h^0(X,\ma O_X(2r_{u, i_{u}}))=2$. It follows that 
 $\ma O_X(2r_{u, i_{u}})\cong \ma O_X(2r_{u,h})$ for $h\in\{k,k'\}$, and hence $T$ is trivial.
 \end{proof}

\begin{remark}\label{def:rhoz}
Let $z\in \cM_{0,4}$   represent  the pointed curve $(\mathbb P^1;p_1,p'_1,p_2,p'_2)$.  
Then $z$ determines a      degree-2   map,  $\rho_z\col \mathbb P^1 \ra \mathbb P^1$,
such that $\rho_z(p_i)=\rho_z(p'_i)$ for $i=1,2$. We denote by $R(\rho_z)=r_{z,1}+r_{z,2}$   the ramification divisor of $\rho_z$. 
We have an isomorphism
 $(\mathbb P^1;p_1,p'_1,p_2,p'_2)\cong (\mathbb P^1;p_1',p_1,p'_2,p_2) 
$ 
induced by the involution associated to  $\rho_z$.
Therefore the involution on $\cM_{0,4}$ exchanging the first two marked  points 
acts like the involution exchanging the last two points.   We denote this involution by $\alpha$, so that 
the quotient map, $\cM_v\to  \cM_{0,4}/\langle\alpha\rangle$,   is   two-to-one  and ramifies only at
$(\mathbb P^1;  0,\infty,1,-1)$.
\end{remark}

\begin{proposition}\label{prop:b1}
Let $(G,G,s)$ be a basic stable spin graph. Then $\hcS_{(G,G,s)}$  is irreducible.
\end{proposition}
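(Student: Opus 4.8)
The plan is to exhibit $\hcS_{(G,G,s)}$ as the surjective image of an irreducible space. Since $(G,G,s)$ is basic we have $P=G$, so $\hcS_{(G,G,s)}=\tcS_{(G,G,s)}$ (by the remark after \eqref{eq:Sprimedef}), and by the Cartesian square \eqref{diag:univ} this is finite over $\wt{\cM}_G=\prod_{v\in V(G)}\cM_v$, which is smooth and irreducible, being a product of moduli spaces of smooth pointed curves. For each $v\in V^+$ I would introduce the cover $\wt{\cM}'_v\to\cM_v$ whose fibre over a marked curve $C_v$ is the set $R(\rho_{C_v})$ of its $|I_v|$ ramification points; since these are always distinct smooth points of $C_v$, the cover $\wt{\cM}'_v\to\cM_v$ is finite étale of degree $|I_v|\in\{2,4\}$. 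Fixing some $u_0\in V^+$ (the set $V^+$ is non-empty: otherwise every vertex of $G$ would have weight $0$, no loop, and --- being cyclic and satisfying \eqref{eq:basic} --- degree $2$, so $G$ would be a single cycle, of genus $1$), set $W:=\prod_{v\in V^+}\wt{\cM}'_v\times\prod_{v\in V_{0,0}}\cM_v$. Using Lemma~\ref{lem:thetaform} there is a morphism $W\to\tcS_{(G,G,s)}$ sending $\big((C_v,r_v)_{v\in V^+},(C_v)_{v\in V_{0,0}}\big)$ to the curve $X$ glued from the $C_v$ along $G$, together with $\cO_X(\sum_{v\in V^+}r_v)$ if $s$ is odd, and with $\cO_X\big(\epsilon_{u_0}3r_{u_0}-\epsilon_{u_0}R(\rho_{C_{u_0}})+\sum_{v\in V^+\smallsetminus\{u_0\}}r_v\big)$ if $s$ is even; by that lemma this line bundle is a theta characteristic of parity $s$, and every such one is obtained exactly once, so the morphism is surjective. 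Hence everything reduces to proving that each $\wt{\cM}'_v$ ($v\in V^+$) is irreducible; since $\cM_v$ is irreducible and $\wt{\cM}'_v\to\cM_v$ is finite étale, this is equivalent to transitivity of the monodromy of $\pi_1(\cM_v)$ on $R(\rho_{C_v})$.

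To establish that transitivity I would argue case by case, using Remark~\ref{rem:basic}, which for $v\in V^+$ gives $\cM_v=\cM_{0,4}$ if $w(v)=0$ (then $|I_v|=2$), and $\cM_v=\cM_{1,2}$ or $\cM_{1,3}$ if $w(v)=1$ (then $|I_v|=4$); in particular $\dim\cM_v\ge1$. If $w(v)=0$, then $C_v^\nu=\PP^1$ and $\rho_{C_v}$ is the unique degree-$2$ map whose deck involution $\iota$ interchanges two prescribed pairs among the four marked points of $\PP^1$ --- the two branches of a loop at $v$, together with either the two branches of a second loop (case $v\in V_{0,2}$) or the pair $p_v,p'_v$ (case $v\in V_{0,1}$) --- so $R(\rho_{C_v})$ is the pair of fixed points of $\iota$. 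Normalising the four points as $\{0,\infty\}$ and $\{1,\lambda\}$, with $\lambda$ the cross-ratio coordinate on $\cM_v=\cM_{0,4}$ as in Remark~\ref{def:rhoz}, one computes $\iota\colon z\mapsto\lambda/z$ and $R(\rho_{C_v})=\{\sqrt\lambda,-\sqrt\lambda\}$, so a loop around $\lambda=0$ interchanges the two ramification points. If $w(v)=1$, then $C_v^\nu=E$ is a smooth genus-$1$ curve and $R(\rho_{C_v})=\{r\in E:2r\sim D\}$ is a torsor under $E[2]$, where $D=p_v+p'_v$ for $v\in V_{1,1}$ and $D$ is the divisor of the two branches of the loop for $v\in V_{1,2}$. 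Here I would combine two sources of monodromy inside $\cM_v$: varying the $j$-invariant of $E$ acts on $E[2]$ through $\operatorname{Sp}(E[2])\cong\operatorname{SL}_2(\ZZ/2)$, which is transitive on $E[2]\smallsetminus\{0\}$, while moving one of the marked points of $E$ around the loops generating $\pi_1(E)$ translates the torsor by the elements of $E[2]$; together they generate the full affine group of $E[2]$, which is transitive on $R(\rho_{C_v})$. Thus $\wt{\cM}'_v$ is irreducible in every case, so $W$ is irreducible, and therefore so is its surjective image $\tcS_{(G,G,s)}=\hcS_{(G,G,s)}$.

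The hard part will be the two monodromy computations, and in particular the genus-$1$ case: one must check that $\cM_v$ --- which here is only $\cM_{1,2}$ or $\cM_{1,3}$ --- genuinely contains loops realising both the symplectic action on $E[2]$ and the translations by $E[2]$ (this uses that the $j$-invariant map $\cM_v\to\cM_{1,1}$ is dominant with positive-dimensional fibres), and one must read off the precise divisor $D$ --- respectively the two pairs of marked points in the genus-$0$ case --- from the description of $\rho_{C_v}$ in Lemma~\ref{lem:thetaform}, working through the short list of possible local shapes of a basic graph at $v$ provided by Remark~\ref{rem:basic}. The remaining ingredients --- irreducibility of $\cM_{0,4},\cM_{1,2},\cM_{1,3}$, surjectivity of $W\to\tcS_{(G,G,s)}$, and the implication from transitive monodromy to irreducibility of a finite étale cover of an irreducible base --- are routine.
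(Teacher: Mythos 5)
Your proposal is correct and is essentially the same as the paper's proof. Both arguments rest on Lemma~\ref{lem:thetaform} and on the observation that irreducibility reduces to transitivity of the monodromy of $\pi_1(\cM_v)$ on the ramification set $R(\rho_{C_v})$, separately for each $v\in V^+$. Where you differ is mainly in packaging: the paper directly argues that the smooth space $\tcS_{(G,G,s)}$ is connected by exhibiting, for each $v\in V^+$, a closed loop $\kappa_v$ in $\cM_v$ whose monodromy realises a prescribed transposition of $I_v$ (for $w(v)=1$ the loop is produced by pushing a path of $\cM_{0,5}$ forward along the branched-double-cover morphism $\cM_{0,5}\to\cM_{1,2}$); you instead isolate the monodromy in an auxiliary \'etale cover $\wt{\cM}'_v\to\cM_v$, show each such cover is connected, and exhibit $\tcS_{(G,G,s)}$ as a surjective image of the irreducible product $W$. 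Your group-theoretic treatment of the genus-$1$ case differs from the paper's explicit path construction but is valid, and can even be simplified: translations of the $E[2]$-torsor $R(\rho_{C_v})$ by $E[2]$, obtained by moving a marked point around the loops of $E$, already act transitively, so the $\operatorname{SL}_2(\mathbb{Z}/2\mathbb{Z})$ monodromy from varying the $j$-invariant is not actually needed. One further remark in your favour: you correctly allow $\cM_v=\cM_{1,3}$, which occurs precisely when $G$ is a single vertex of weight~$1$ carrying one loop and one leg; the paper's proof asserts $\cM_v=\cM_{1,2}$ for every $v$ of weight~$1$ and appears to overlook this boundary case, so your version is slightly more careful on that point.
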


\begin{proof}  
For any  $v\in V_{0,0}$ we have $\cM_v=\cM_{0,3}$ which is a point. Hence the natural \'etale map from 
$\hcS_{(G,G,s)}=\tcS_{(G,G,s)}$ to $\wt{\cM}_G$ can be written 
 as follows
$$
\xi\col \tcS_{(G,G,s)}\la \prod_{v\in  V^+}\cM_v= \wt{\cM}_G
$$

Since   $\tcS_{(G,G,s)}$ is smooth it suffices to prove it is connected.  
We assume throughout the proof that $\text{char}(k)=0$: the proof in positive characteristic will follow using \cite[Thm. 4.17]{DM69}, as in \cite[Thm 3.3.11]{J00}.  We also assume that $(G,G,s)$ is odd; the proof for the other case   is  the  same. 
By Lemma \ref{lem:thetaform} we can write 
\[
\xi^{-1}(z)=\{ L_{X,I} : \  \forall \text{ $G$-collections }     I\}
\]
where $X$ is the   curve parametrized by the image of $z$ in $\cM_G$.
To show that $\tcS_{(G,G,s)}$ is connected, it suffices to find a point $z\in \wt{\cM}_G$ such that  for every $G$-collections $I,J$ there is a path in $\tcS_{(G,G,s)}$ connecting $ L_{X,I}  $ to $ L_{X,J}$. 

 Write $I=(i_v)_{v\in  V^+}$ and $J=(j_v)_{v\in  V^+}$, where $i_v,j_v\in I_v$. Let $\alpha_v$ be  the involution of $I_v$    switching $i_v$ and $j_v$ for every $v\in  V^+$.

First consider a vertex $v\in V_{0,1}\cup V_{0,2}$, so that  $\cM_v=\cM_{0,4}$.

Let $z_v\in \cM_v $ correspond to    $(\mathbb P^1; 0,\infty,1,-1)$, let $\rho=\rho_{z_v}$ be the degree-2   map of Remark~\ref{def:rhoz}, and let 
$r_{z_v,1}$ and $r_{z_v,2}$ be its   two ramification points.
This   enables us to define  the following two points in $\cM_{0,4}$ 
$$
(\mathbb P^1; \rho (r_{z_v,1}),\rho  (r_{z_v,2}),\rho (0),\rho (1)),
\quad
 \quad
(\mathbb P^1; \rho (r_{z_v,\alpha_v(1)}),\rho (r_{z_v,\alpha_v(2)}),\rho (0),\rho (1)).
$$
Fix a  path, $\psi_v$, in $\cM_{0,4}$ between these two points. 

Now, to any   point  $(\mathbb P^1;q_1,q_2,q_3,q_4)$ of  $\cM_{0,4}$ we associate   the
degree-2 covering of $\mathbb P^1$ branched at  $q_1$ and $q_2$,  and marked by the (four)  pre-images of $q_3$ and $q_4$.
This does not  quite give a point in  $\cM_{0,4}$, because
the four   marked points are not  naturally ordered.  
 But, using   Remark~\ref{def:rhoz}, it clearly  determines    a point in
 $\cM_{0,4}/\langle\alpha\rangle$. 
 In this way the 
  path  $\psi_v$  gives rise to a closed path, $\psi'_v$, in  $\cM_{0,4}/\langle\alpha\rangle$ based at $[(\mathbb P^1;  0,\infty,1,-1)]$. 
Now,  there exists a   lifting   of $\psi'_v$ to $\cM_v$, written
 $\kappa_v$,   and it is clear    that   $\kappa_v$  is a closed path based at $z_v$. 

Now consider a vertex $v\in V_{1,1}\cup V_{1,2}$. Let $z_v\in \cM_v=\cM_{1,2}$ be the point parametrizing  a pointed curve $(Z_v;p_v,p'_v)$. 
 Let $\rho_{z_v}\col Z\ra \mathbb P^1$ be    the degree-2 map   induced by the linear system $|p_v+p_v|$, and    $R(\rho_{z_v})=r_{z_v,1}+r_{z_v,2}+r_{z_v,3}+r_{z_v,4}$ be its ramification divisor.
Write $\rho= \rho_{z_v}$ for simplicity.
  Consider a   path $\psi_v$ in $\cM_{0,5}$ starting from the point 
\[
(\mathbb P^1; \rho (r_{z_v,1}),\rho (r_{z_v,2}), \rho (r_{z_v,3}), \rho (r_{z_v,4}),\rho (p_v))
\]
and ending at the point 
\[
(\mathbb P^1; \rho (r_{z_v,\alpha_v(1)}),\rho (r_{z_v,\alpha_v(2)}),\rho (r_{z_v,\alpha_v(3)}),\rho (r_{z_v,\alpha_v(4)}),\rho (p_v)).
\]
Now,  
we have a morphism $\cM_{0,5}\to \cM_{1,2}$, mapping $(\mathbb P^1;q_1,q_2,q_3,q_4,q_5)$
to $(Z_v;p,p')$, defined as  the pointed curve endowed with the degree-2 covering  $Z_v\to \PP^1$ branched at the first four points and marked by the two   pre-images, $p,p'$, of $q_5$,
taken in any order (we have an isomorphism $(Z_v;p,p')\cong (Z_v;p',p)$).
The image of $\psi_v$ under this morphism  gives rise to a   closed path, $\kappa_v$, in $\cM_{1,2}$ based at $z_v$.

Now take a point $z=\prod_{v\in  V^+}z_v\in \wt{\cM}_G$ with $z_v\in \cM_v$   as above. Take the   closed path $\kappa$ in $\wt{\cM}_G$ based at $z$ given by the product of the closed paths $\kappa_v$ in each $\cM_v$.  
For every $v\in V^+$, the map $\rho_{z_v}$  coincides with the map $\rho_{C_{v}}\col C_{v}^{\nu}\ra \mathbb P^1$ defined before Lemma~\ref{lem:thetaform}, and hence we have  $R(\rho_{z_v})=R(\rho_{C_{z,v}})$.  
By the description of the theta characteristics of $X$ in Lemma \ref{lem:thetaform} in terms of the ramification points of the maps $\rho_{C_{z,v}}$,   we see that,  by construction,  there is a lifting to  $\tcS_{(G,G,s)}$ of the path $\kappa$ to a  path    starting from $ L_{X,I} $ and ending at  $ L_{X,J}$, as wanted. 
\end{proof}

 \subsection{Proof of the irreducibility} To prove Theorem~\ref{thm:irr} we will use the next lemma to handle
 graphs not treated in Propositions~\ref{lem:easy} and   \ref{prop:b1}.
\begin{lemma}\label{lem:specia}
Let $G$ be a non basic Eulerian stable graph of genus $g\ge 2$ with $E(G)\ne \emptyset$. Consider a spin structure of type $(G, s)$ on $G$. 
Then there is a stable graph $G'$ contracting to $G$, and a connected spin structure $(P',s')$ on $G'$ such that:
\begin{enumerate}[(a)]
\item $|E(G')|=|E(G)|+1$ and  $b_1(G')=b_1(G)$;
\item $\Aut(G',P',s')=\Aut(G')$;
\item  $\gamma_*(P', s')=(G, s)$ for every contraction $\gamma\col G'\ra G$;
\item
any spin structure  on $G'$  contracting to $(G,G,s)$ is equal to $(G',P',s')$.
\end{enumerate}\end{lemma}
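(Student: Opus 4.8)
The plan is to realize $G'$ as an \emph{un--contraction} of $G$ at a single vertex. A contraction $\gamma\col G'\to G$ with $|E(G')|=|E(G)|+1$ must contract exactly one edge $e$, and the condition $b_1(G')=b_1(G)$ forces $e$ to be a non--loop edge; hence $G'$ is obtained from $G$ by choosing a vertex $v$, replacing it by two vertices $v_1,v_2$ joined by a new edge $e$, and distributing among $v_1,v_2$ the half--edges at $v$ (each loop at $v$ becoming a loop at $v_1$, a loop at $v_2$, or an edge $v_1v_2$), the weight $w(v)$, and the legs at $v$. Any such $G'$ is connected, satisfies $|E(G')|=|E(G)|+1$ and $b_1(G')=b_1(G)$, and lies in $\Sgn$ once $v_1,v_2$ are stable; the contraction of $e$ gives $\gamma\col G'\to G$ with $\gamma_*E(G')=E(G)$ and $\gamma_*(E(G')\smallsetminus\{e\})=E(G)$. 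So (a) is automatic, and everything reduces to choosing $v$ and the split well.

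The spin structure will be built by one of two templates. \emph{Template A}: arrange the split so that $G'$ is Eulerian; then $E(G')\in\cC_{G'}$ gives a connected spin structure ($\overline{E(G')}=G'$), $G'/E(G')$ is one vertex of weight $g\ge 2$, and we set $(P',s'):=(E(G'),s)$. \emph{Template B}: arrange the split so that $G'-e$ is Eulerian and $e$ is not a bridge of $G'$; then $P':=E(G')\smallsetminus\{e\}\in\cC_{G'}$ gives a connected spin structure since $\overline{P'}=G'-\{e\}^o$ is connected, $G'/P'$ is one vertex of weight $g-1\ge 1$, and we set $(P',s'):=(P',s)$. In either template the remaining properties are formal. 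For (c): in Template B any contraction $G'\to G$ is forced to contract the unique edge joining the two odd--degree vertices $v_1,v_2$, i.e.\ $e$; in Template A the identity $\gamma_*E(G')=E(G)$ holds for whichever edge is contracted; and in both cases the sign transports to $s$ on the single vertex of $G/E(G)$. For (d): a spin structure on $G'$ contracting to $(G,G,s)$ has cyclic part among $\{E(G')\smallsetminus\{\hat e\},E(G')\}$ for the contracted edge $\hat e$, and exactly one of these is cyclic, namely $P'$ (in Template A because deleting any edge of an Eulerian graph creates odd--degree vertices; in Template B because $G'-\hat e$ is Eulerian only when $\hat e$ joins the two odd--degree vertices, forcing $\hat e=e$), whereupon the sign is forced to be $s$. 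For (b): every automorphism of $G'$ preserves $E(G')$, and in Template B also fixes $e$ (the unique edge whose removal makes $G'$ Eulerian), so it preserves $(P',s')$; hence $\Aut(G',P',s')=\Aut(G')$.

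It remains to produce, at a vertex $v$ witnessing the non--basicness of $G$, a split of one of these types with $v_1,v_2$ stable; I argue by cases on $\deg_G(v)$, which is even and $\ge2$ since $G$ is Eulerian and stable. If $\deg_G(v)\ge 6$: Template A, distributing the half--edges so that $\deg_{G'}(v_1)=3$ and $\deg_{G'}(v_2)=\deg_G(v)-1$ (both odd, so $G'$ is Eulerian), which keeps $v_1,v_2$ stable with no condition on weights; realizability of such a distribution is routine half--edge bookkeeping. If $\deg_G(v)=4$ and $w(v)+\ell(v)\ge1$: Template A with $\deg_{G'}(v_1)=2,\ \deg_{G'}(v_2)=4$, placing a unit of weight or a leg on $v_1$. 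If $\deg_G(v)=4$ and $w(v)=\ell(v)=0$: then failure of basicness at $v$ forces $\operatorname{loop}(v)=0$, so all four edges at $v$ are non--loop, and Template B applies with $\deg_{G'}(v_1)=\deg_{G'}(v_2)=3$ (stable, as $-2+3>0$): using that $G$ Eulerian implies every connected component of $G-v$ meets $v$ in an even number $\ge 2$ of edges, the four edges at $v$ can be distributed so that $v_1$ and $v_2$ are each joined to a common component of $G-v$, making $e$ non--bridge (and no second $v_1v_2$ edge is created, since there are no loops at $v$). Finally, if $\deg_G(v)=2$: failure of basicness forces $w(v)+\ell(v)\ge 2$, and Template A applies, splitting into two degree--$2$ vertices — routing a possible loop at $v$ as an edge $v_1v_2$ — and distributing $w(v),\ell(v)$ so each of $v_1,v_2$ receives at least one unit, whence $2w_i-2+2+\ell_i\ge1>0$.

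The step I expect to be the crux is this last case analysis, and in particular the configuration $\deg_G(v)=4$, $w(v)=\ell(v)=0$ — exactly the one isolated by the final clause of the definition of a basic graph — which forces Template B: one must simultaneously ensure that $G'-e$ is Eulerian, that $e$ is not a bridge (via the even--degree count on components of $G-v$), and that the split creates no second edge between $v_1$ and $v_2$, the last being what makes $e$ canonical and hence the uniqueness statements (b) and (d) go through. The rest is elementary manipulation of cycles in $\cC_{G'}$ and of vertex degrees.
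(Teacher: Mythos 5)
Your strategy is the same as the paper's: blow up a vertex $v$ at which basicness fails, arrange the split so that either $G'$ is Eulerian (and take $P'=E(G')$, with all of (a)--(d) then essentially formal), or, in the exceptional case $\deg_G(v)=4$, $w(v)=\ell(v)=0$ (hence no loop at $v$), so that the two new vertices are the only odd-degree ones and are joined solely by the new edge $e$ (and take $P'=E(G')\smallsetminus\{e\}$). Your verifications of (b)--(d) -- parity forcing every contraction $G'\to G$ to contract the unique edge joining the two odd vertices, and the dichotomy $Q\in\{E(G'),E(G')\smallsetminus\{\hat e\}\}$ with exactly one cyclic member -- coincide with the paper's, and your connectivity argument for $G'-e$ (each component of $G-v$ meets $v$ in an even number $\ge 2$ of edges, so the four half-edges can be split so that one component joins $v_1$ to $v_2$) is a sound substitute for the paper's device of separating the two edges of a cycle through $v$; your case analysis on the witnessing vertex is also complete.

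The one genuine flaw is the case $\deg_G(v)\ge 6$ as written. You assign $\deg_{G'}(v_1)=3$ and $\deg_{G'}(v_2)=\deg_G(v)-1$ and assert ``both odd, so $G'$ is Eulerian'': an Eulerian graph has all vertex degrees even, so this $G'$ is \emph{not} Eulerian and Template A does not apply to it. (Read instead as a Template B configuration, the construction would additionally require $e$ to be a non-bridge and the \emph{unique} edge joining $v_1$ and $v_2$, neither of which you check here, and the uniqueness can genuinely fail, e.g. if a loop at $v$ is routed as a second $v_1v_2$-edge, breaking (b), (c) and (d).) The fix stays entirely inside your framework and is what the paper does: give each of $v_1,v_2$ an odd number $\ge 3$ of the old half-edges at $v$ (say $3$ and $\deg_G(v)-3$), so that together with the new edge both have even degree $\ge 4$; then $G'$ is Eulerian and stable and Template A applies verbatim. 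With this correction your proof is complete and essentially identical to the paper's.
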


\begin{proof}
Since $G$ is not basic, there exists a vertex $v$ of $G$ such that 
\begin{equation}
 \label{nobasic}
w(v)+\deg (v)+\ell(v)\geq 4,
\end{equation}
and if equality holds then $G$ has    no loop based at $v$.

Let $C\subset G$ be a cycle containg $v$. Denote by $e_1, e_2$ the edges of $C$ adjacent at $v$
and denote by $E^*_v$ the set of edges of $G$ adjacent to $v$ and different from $e_1$ and $e_2$. Then $E^*_v$ has even (possibly zero) cardinality.
The graph $G'$ will be defined as a blow-up of $v$ separating $e_1, e_2$ in such a way that $G'$ is stable.
We shall denote by $e'\in E(G')$ the edge to be contracted to $v$, and  by $u_1,u_2 \in V(G')$ the end points of $e'$.
Note that $u_1$ and $u_2$ have degree at least $2$.
The weight function, $w'$, of   $G'$ must  be such that
$w'(u_1)+w'(u_2)=w (v)$. Similarly, the number of legs at each vertex must satisfy
  $\ell'(u_1)+\ell'(u_2)=\ell (v)$.
 
 We   need to distribute the edges in $E^*_v$  between $u_1$ and $u_2$, and  define $w'$ and $\ell'$  so that  $G'$ is stable.
It is clear that \eqref{nobasic} implies that this is always possible.
On the other hand it is not always possible to have $G'$ also Eulerian.
Indeed,  $G'$ is Eulerian if and only if  $u_1$ and $u_2$ have even degree. For that to be possible we need either
  $\deg(v)\geq 6$ (i.e.  $|E^*_v|\geq 4$ and we can attach two edges at $u_1$ and the remaing ones at $u_2$)
 or $\deg(v)= 4$ and  $w(v)+\ell(v)\geq 1$
   (so that we can attach the two edges of $E^*_v$ at $u_1$ and define $w'$ and $\ell'$ so that $w'(u_2)+\ell'(u_2)=1$),
   or else $\deg(v)=2$ (so that  $w(v)+\ell(v)\geq 2$ and   we define $w'$ and $\ell'$ so that
    $w'(u_i)+\ell'(u_i)\geq 1$ for $i=1,2$).

The only   case left is $\deg(v)=4$  and  $w(v)+\ell(v)=0$, which is different in that  $G'$ is not Eulerian, so we treat it  at the end.
    
In  all    other  cases we have   a  Eulerian stable graph $G'$ such that $G'/e'=G$; see Figure~\ref{fig:deg6}.  Let $(G',G',s')$  be  such that the value of $s'$ (on the unique vertex of  $V(G'/G')$)
is equal to the value of $s$ (on the unique vertex of $V(G/G)$).  Then it is clear that  $(G',G',s')$ satisfies   the statement.

\begin{figure}[h]
\[
\begin{xy} <25pt,0pt>:
(2.5,0.5)*{\scriptstyle\bullet}="b";
(4,0)*{\scriptstyle}="d";
(5,0)*{\scriptstyle}="a";
"b"+0;"b"+(0,0.8)**\crv{"b"+(0,0)};
"b"+0;"b"+(0,-0.8)**\crv{"b"+(0,0)};
"b"+0;"b"+(1,0.5)**\crv{"b"+(0,0)};
"b"+0;"b"+(-1,0.5)**\crv{"b"+(0,0)};
"b"+0;"b"+(1,-0.5)**\crv{"b"+(0,0)};
"b"+0;"b"+(-1,-0.5)**\crv{"b"+(0,0)};
{\ar@{->}(5.8,0.5)*{};(4.5,0.5)*{}};
{\ar@/_/@{..}(7,1)*{};(7,0)*{}};
{\ar@/^/@{..}"b"+(0,0.8)*{};"b"+(1,0.5)*{}};
{\ar@/_/@{..}"b"+(0,-0.8)*{};"b"+(1,-0.5)*{}};
{\ar@/_/@{..}(1.5,1)*{};(1.5,0)*{}};
(8,1)*{\scriptstyle\bullet}="e";
(8,0)*{\scriptstyle\bullet}="g";
(9,0)*{\scriptstyle}="h";
"g"+(-1,0);"g"+(0,0)**\crv{"g"};
"g"+0;"e"+0**\crv{"g"+(0,0.1)};
"g"+(0,0);"g"+(0,-0.8)**\crv{"g"+(0,0)};
"g"+(0,1);"g"+(0,1.8)**\crv{"g"+(0,1)};
"g"+(0,1);"g"+(1,1.5)**\crv{"g"+(0,1)};
"g"+(0,0);"g"+(1,-0.5)**\crv{"g"+(0,0)};
{\ar@/^/@{..}"g"+(0,1.8)*{};"g"+(1,1.5)*{}};
{\ar@/_/@{..}"g"+(0,-0.8)*{};"g"+(1,-0.5)*{}};
"e"+(-1,0);"e"+(0,0)**\crv{"e"+(0,0)};
"a"+(2.3,1.15)*{\scriptstyle{e'_2}};
"a"+(2.3,-0.2)*{\scriptstyle{e'_1}};
"a"+(3.2,0.55)*{\scriptstyle{e'}};
"a"+(2.8,0.2)*{\scriptstyle{u}_1};
"a"+(2.8,0.8)*{\scriptstyle{u_2}};
"b"+(-0.7,0.2)*{\scriptstyle{{e}_2}}; 
"b"+(-0.7,-0.55)*{\scriptstyle{e_1}};
"b"+(-0.1,0.22)*{\scriptstyle{v}};
"a"+(-4.5,0.5)*{G};
"a"+(5,0.5)*{G'};
\end{xy}
\]
\caption{ }
\label{fig:deg6}
\end{figure}
We are left with the case  $\deg(v)=4$ and  $w(v)+\ell(v)=0$;
then $G$ has no loops at $v$. Now we define $G'$ and the corresponding  contraction, $\gamma$,  as in 
Figure \ref{fig:deg4noloop}.  
Notice that $G'$ is stable, and has exactly two vertices of odd degree, namely $u_1$ and $u_2$.
Hence 
  $G'$ is   not Eulerian, but its subgraph 
  $P':= G' -e'$, is Eulerian. 
  Consider the 
 spin structure   $( P',s')$ where  $s'$ has the same value as  $s$. Of course, $\gamma_*(P',s')=(G,s)$
 and it obvious that $P'$ is the only cyclic subgraph of $G'$ such that $\gamma_*P'=G$.
 
 To prove that    $\Aut(G',P',s')=\Aut(G')$ it suffices to observe that every automorphism of $G'$ must 
 leave $\{u_1,u_2\}$ invariant, and hence it must
 fix $e'$
which  is the only edge containing $u_1$ and $u_2$ ($G$ has no loop at $v$).

Now, any contraction of $G'$ to $G$ must take $u_1$ and $u_2$ to vertices of even degree. Therefore the contraction $\gamma$ is the only
possible one.  \end{proof}
 \begin{figure}[h]
\[
\begin{xy} <25pt,0pt>:
(2.5,0.5)*{\scriptstyle\bullet}="b";
(1.25,0.5)*{\scriptstyle\bullet};
(3.75,0.5)*{\scriptstyle\bullet};
(6.75,0.5)*{\scriptstyle\bullet};
(9.25,0.5)*{\scriptstyle\bullet};
(4,0)*{\scriptstyle}="d";
(5,0)*{\scriptstyle}="a";
"b"+0;"b"+(1,0.5)**\crv{"b"+(0,0)};
"b"+0;"b"+(-1,0.5)**\crv{"b"+(0,0)};
"b"+0;"b"+(1,-0.5)**\crv{"b"+(0,0)};
"b"+0;"b"+(-1,-0.5)**\crv{"b"+(0,0)};
{\ar@{->}(5.8,0.5)*{};(4.5,0.5)*{}};
{\ar@/^/@{..}(9,1)*{};(9,0)*{}};
{\ar@/_/@{..}(7,1)*{};(7,0)*{}};
{\ar@/^/@{..}(3.5,1)*{};(3.5,0)*{}};
{\ar@/_/@{..}(1.5,1)*{};(1.5,0)*{}};
(8,1)*{\scriptstyle\bullet}="e";
(8,0)*{\scriptstyle\bullet}="g";
(9,0)*{\scriptstyle}="h";
"g"+(-1,0);"g"+(1,0)**\crv{"h"};
"g"+0;"e"+0**\crv{"g"+(0,0.1)};
"e"+(-1,0);"e"+(1,0)**\crv{"e"+(0,0)};
"a"+(3.7,1.2)*{\scriptstyle{e_3}};
"a"+(2.3,1.2)*{\scriptstyle{e_2}};
"a"+(3.7,-0.2)*{\scriptstyle{e_4}};
"a"+(2.3,-0.2)*{\scriptstyle{e_1}};
"a"+(3.25,0.55)*{\scriptstyle{e'}};
"a"+(3,-0.25)*{\scriptstyle{u_1}};
"a"+(3,1.3)*{\scriptstyle{u_2}};
"b"+(-0.5,0.5)*{\scriptstyle{e_2}}; 
"b"+(0.5,-0.45)*{\scriptstyle{e_4}};
"b"+(-0.5,-0.45)*{\scriptstyle{e_1}};
"b"+(0.5,0.5)*{\scriptstyle{e_3}};
"b"+(0,0.25)*{\scriptstyle{v}};
"a"+(-4.5,0.5)*{G};
"a"+(5,0.5)*{G'};
\end{xy}
\]
\caption{ }
\label{fig:deg4noloop}
\end{figure}

We are ready to prove that $\hcS_{(G, P,s)}$ is irreducible for every stable spin graph $(G,P,s)$.  
We will apply the discussion of Subsection~\ref{rem:defospin}.
First, we illustrate the strategy of the  proof  in the following example, where we show that $\cS_{(G,G,s)}$ is irreducible in case $\ov {\cM}_G$ is smooth.

\begin{example}
Let $G'\ra G$ be the contraction of a stable graph of genus $2$ with $2$ legs as in Figure \ref{fig:workexa}. Notice that the contraction of the edge $e'$ of $G'$ is the unique contraction of $G'$ to $G$. As $G'$ is the unique stable  graph contracting  to $G$, we have  that $\ov{\cM}_G$ is smooth. 
Consider a spin graph of type $(G,G,s)$, and let $(P',s')$ be the spin structure on $G'$ such that $ P'  = [G'- e']$ and   that $s'(v')=s(v)$, where $V(G/G)=\{v\}$ and $V(G'/P')=\{v'\}$.
\begin{figure}[h]
\[
\begin{xy} <25pt,0pt>:
(2.5,0.5)*{\scriptstyle\bullet}="b";
"b"+(2,0)*{\scriptstyle\bullet}="e";
"b"+(-2,0)*{\scriptstyle\bullet}="f";
(4,0)*{\scriptstyle}="d";
(5,0)*{\scriptstyle}="a";
"b"+0;"b"+(2,0)**\crv{"b"+(1,1)};
"b"+0;"b"+(-2,0)**\crv{"b"+(-1,-1)};
"b"+0;"b"+(2,0)**\crv{"b"+(1,-1)};
"b"+0;"b"+(-2,0)**\crv{"b"+(-1,1)};
{\ar@{->}(5.8,0.5)*{};(5.1,0.5)*{}};
"e"+0;"e"+(0,0.6)**\crv{"e"};
"f"+0;"f"+(0,0.6)**\crv{"f"};
(8,1)*{\scriptstyle\bullet}="e";
(8,0)*{\scriptstyle\bullet}="g";
(9,0)*{\scriptstyle}="h";
"g"+(1.5,0.5)*{\scriptstyle\bullet};
"g"+(-1.5,0.5)*{\scriptstyle\bullet};
"g";"g"+(1.5,0.5)**\crv{"g"};
"g";"g"+(-1.5,0.5)**\crv{"g"};
"g"+(0,1);"g"+(1.5,0.5)**\crv{"g"+(0,1)};
"g"+(0,1);"g"+(-1.5,0.5)**\crv{"g"+(0,1)};
"e"+(1.5,-0.5);"e"+(1.5,0.1)**\crv{"e"+(1.5,-0.5)};
"e"+(-1.5,-0.5);"e"+(-1.5,0.1)**\crv{"e"+(-1.5,-0.5)};
"g"+0;"e"+0**\crv{"g"+(0,0.1)};
"a"+(3.7,1)*{\scriptstyle{e_5}};
"a"+(2.3,1)*{\scriptstyle{e_4}};
"a"+(3.7,0)*{\scriptstyle{e_3}};
"a"+(2.3,0)*{\scriptstyle{e_2}};
"a"+(3.25,0.55)*{\scriptstyle{e'}};
"a"+(3,-0.25)*{\scriptstyle{0}};
"a"+(3,1.2)*{\scriptstyle{0}};
"b"+(-1,0.7)*{\scriptstyle{e_4}}; 
"b"+(1,-0.7)*{\scriptstyle{e_3}};
"b"+(-1,-0.7)*{\scriptstyle{e_2}};
"b"+(1,0.7)*{\scriptstyle{e_5}};
"b"+(0,0.25)*{\scriptstyle{0}};
"b"+(-2.2,0)*{\scriptstyle{0}};
"b"+(2.2,0)*{\scriptstyle{0}};
"b"+(7.2,0)*{\scriptstyle{0}};
"b"+(3.8,0)*{\scriptstyle{0}};
"a"+(-5.3,0.55)*{G};
"a"+(5.4,0.58)*{G'};
\end{xy}
\]
\caption{The contraction $G'\ra G$.}
\label{fig:workexa}
\end{figure} 
Now, $\ma S_{(G',P',s')}$ is irreducible, as
$G'$ is $3$-regular (Proposition~\ref{lem:easy}).
We claim
\begin{itemize}
\item[(1)] 
$\ma S_{(G',P',s')}=\ol{\cS}_{(G,G,s)}\cap \pi^{-1}(\cM_{G'})$;
\item[(2)]
 $\ol{\cS}_{(G,G,s)}$ is smooth.
\end{itemize}
Indeed (1) follows by Proposition \ref{thm-strata} and  Lemma \ref{lem:specia}. 
For (2),  recall the notations of Subsection~\ref{rem:defospin} and consider a point $y\in \ma S_{(G',P',s')}$. The homomorphism $\pi^\#\col\wh{\ma O}_{\Mgnbst,\pi(y)}\ra \wh{\ma O}_{\Sgnbst,y}$ is given by $\pi^\#(t_1)=s_1^2$ and $\pi^\#(t_i)=s_i$ for $i\ge 2$, hence we have 
\[
\wh{\ma O}_{\ol{\ma S}_G, y}\cong \frac{k[[t_1,\dots, t_5]]}{(t_2,t_3,t_4,t_5)}\otimes_{k[[t_1,\dots,t_5]]}k[[s_1,\dots,s_5]]\cong\frac{k[[s_1,\dots, s_5]]}{(s_2,s_3,s_4,s_5)}.
\]
Therefore $\ol{\cS}_G$, and hence $\ol{\cS}_{(G,G,s)}$, is smooth at $y$.

We  prove that $\ma S_{(G,G,s)}$ is irreducible by showing that so is $\ol{\cS}_{(G,G,s)}$. Let $\ma W_1,\dots,\ma W_p$ be the irreducible components of $\ol{\cS}_{(G,G,s)}$. 
Then $\ma S_{(G',P',s')}=\cup_{i=1}^p(\ma W_i\cap\ma S_{(G',P',s')})$.  
Since the   map $\ma{S}_{(G,G,s)}\ra \cM_G$ is finite and \'etale  each irreducible component of $\ma S_{(G,G,s)}$ dominates $\cM_G$. Hence the restriction of the   map $\ol{\cS}_{(G,G,s)}\ra \ol{\cM}_G$ to each $\ma W_i$ is surjective. Therefore, by (1), every intersection $\ma W_i\cap\ma S_{(G',P',s')}$  is not empty. 
Since $\ma S_{(G',P',s')}$ is irreducible and different $\ma W_i$   cannot intersect   (as $\ol{\cS}_{(G,G,s)}$ is smooth), we get $p=1$, as wanted.
\end{example}

\begin{thm}\label{thm:irr}
$\hcS_{(G, P,s)}$ is irreducible for any stable spin graph $(G,P,s)$. 
\end{thm}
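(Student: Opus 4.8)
The plan is to induct on $d(G):=\dim\cM_G=3g-3+n-|E(G)|$, after reducing to the case $P=G$. By \eqref{eq:Sprimedef} we have $\hcS_{(G,P,s)}=\prod_{v\in V(G/P)}\tcS_{(\ol P_v,\ol P_v,s_v)}$, a product over $k$ of spaces of the form $\hcS_{(H,H,t)}$ with $H$ cyclic, and $\sum_v d(\ol P_v)=d(G)$; so it is enough to prove that $\hcS_{(G,G,s)}=\tcS_{(G,G,s)}$ is irreducible for every cyclic stable $G$, arguing by induction on $d(G)$. The base $d(G)=0$ forces $G$ to be $3$-regular, so Proposition~\ref{lem:easy}(c) applies; in the inductive step, if $G$ is $3$-regular, $g\le1$, or $E(G)=\emptyset$ use Proposition~\ref{lem:easy}, if $G$ is basic use Proposition~\ref{prop:b1}, and otherwise $G$ is a non-basic Eulerian stable graph of genus $g\ge2$ with $E(G)\ne\emptyset$. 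In that last case Lemma~\ref{lem:specia} gives a graph $G'$ contracting to $G$ with $|E(G')|=|E(G)|+1$ and a connected spin structure $(P',s')$ on $G'$ satisfying (a)--(d). Since $(P',s')$ is connected, $\hcS_{(G',P',s')}=\tcS_{(\ol{P'},\ol{P'},s')}$ with $\ol{P'}$ cyclic and $d(\ol{P'})=d(G')=d(G)-1$, so by the inductive hypothesis $\hcS_{(G',P',s')}$ is irreducible; moreover Lemma~\ref{lem:specia}(b) and Lemma~\ref{lm:quotient} identify it with $\wt{\cS}_{(G',P',s')}=\cS_{(G',P',s')}\times_{\cM_{G'}}\wt\cM_{G'}$.

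Next I would set up the geometry, following the worked example. By Proposition~\ref{thm-strata} together with Lemma~\ref{lem:specia}(c),(d),
\[
\cS_{(G',P',s')}=\ol\cS_{(G,G,s)}\cap\pi^{-1}(\cM_{G'}).
\]
The example handles the case where $\ol\cM_G$ is smooth along $\cM_{G'}$; in general it is not, because $\ol\cM_G$ has one analytic branch at a point of $\cM_{G'}$ for each contraction $\gamma\colon G'\to G$. To bypass this, the plan is to pass to the presentation $\ol{\wt\cM}_G=\prod_{v\in V(G)}\ol\cM_{w(v),\deg(v)+\ell(v)}$, which is smooth, and to work with $\ol\cT$, the closure of $\tcS_{(G,G,s)}$ inside $\ol\cS_{(G,G,s)}\times_{\ol\cM_G}\ol{\wt\cM}_G$; this contains $\tcS_{(G,G,s)}=\hcS_{(G,G,s)}$ as a dense open, so it suffices to prove that $\ol\cT$ is irreducible.

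The three facts about $\ol\cT$ I would establish are: (i) $\ol\cT$ is smooth at every point lying over $\cM_{G'}$; (ii) the preimage $\widetilde{\ma S}\subset\ol\cT$ of $\cM_{G'}$ decomposes as $\bigsqcup_\gamma\widetilde{\ma S}_\gamma$ over the contractions $\gamma\colon G'\to G$, with $\widetilde{\ma S}_\gamma\cong[\hcS_{(G',P',s')}/\textstyle\prod_v\Aut(H^\gamma_v)]$ irreducible by the inductive hypothesis; and (iii) every irreducible component of $\ol\cT$ dominates $\ol{\wt\cM}_G$, hence meets every $\widetilde{\ma S}_\gamma$. Fact (i) is a local computation using the description of $\pi^{\#}$ from Subsection~\ref{rem:defospin}: at a point over $\cM_{G'}$ there is at most one exceptional node (those of $E(G')\ssm P'$); in the Eulerian case $P'=G'$ there are none, $\pi$ is étale there, and $\ol\cT$ is étale over the smooth $\ol{\wt\cM}_G$; in the non-Eulerian case the contraction $G'\to G$ is the unique one (as shown inside the proof of Lemma~\ref{lem:specia}), $\ol\cM_G$ is already smooth at that point, and the single squared coordinate lies outside the defining equations of the branch, so $\ol\cT$ is again regular. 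For (ii) one checks that the $\gamma$-stratum of $\ol{\wt\cM}_G$ over $\cM_{G'}$ is $[\wt\cM_{G'}/\prod_v\Aut(H^\gamma_v)]$ with $\prod_v\Aut(H^\gamma_v)\subset\Aut(G')$, whence $\widetilde{\ma S}_\gamma=[\wt{\cS}_{(G',P',s')}/\prod_v\Aut(H^\gamma_v)]$, a finite quotient of the irreducible $\hcS_{(G',P',s')}$. Fact (iii) holds because the intersection of a component of $\ol\cT$ with $\tcS_{(G,G,s)}$ is a union of connected components of the smooth $\tcS_{(G,G,s)}$, each finite étale onto the connected $\wt\cM_G$.

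Granting these, the conclusion is formal: fix $\gamma_0$; since $\widetilde{\ma S}_{\gamma_0}$ is irreducible and lies in $\ol\cT=\bigcup_i\ma V_i$, it is contained in a single component $\ma V_{0}$; but every $\ma V_i$ meets $\widetilde{\ma S}_{\gamma_0}$ by (iii), at a point where $\ol\cT$ is smooth by (i), hence locally irreducible, forcing $\ma V_i=\ma V_{0}$; so $\ol\cT$, and therefore $\hcS_{(G,G,s)}$, is irreducible. I expect the main obstacle to be exactly the one this detour addresses: the failure of normality of $\ol\cM_G$ along the deep stratum $\cM_{G'}$ breaks the clean "two components cannot meet in the smooth locus" argument of the example, and replacing $\ol\cM_G$ by its presentation repairs smoothness only at the price of disconnecting the boundary locus $\widetilde{\ma S}$ into the pieces $\widetilde{\ma S}_\gamma$ — so the real work is to verify, using the inductive hypothesis and the identity $\Aut(G',P',s')=\Aut(G')$ of Lemma~\ref{lem:specia}(b), that each $\widetilde{\ma S}_\gamma$ is irreducible, and, via dominance over $\ol{\wt\cM}_G$, that each meets every component of $\ol\cT$.
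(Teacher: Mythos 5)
Your proposal is correct and follows essentially the same route as the paper: both reduce to $P=G$ via \eqref{eq:Sprimedef}, induct on $d_G=\dim\cM_G$, dispatch the base and special cases with Propositions~\ref{lem:easy} and~\ref{prop:b1}, invoke Lemma~\ref{lem:specia} to produce $(G',P',s')$, use Proposition~\ref{thm-strata} to identify $\cS_{(G',P',s')}$ with $\ol{\cS}_{(G,G,s)}\cap\pi^{-1}(\cM_{G'})$, pull back to the smooth presentation $\ol{\wt{\cM}}_G$, and close with the same "smoothness along the boundary stratum plus surjectivity of each component" argument. The only cosmetic differences are that the paper factors the base change explicitly through the normalization $\ol{\cM}^{\nu}_G$ before $\ol{\wt{\cM}}_G$, picks a single carefully constructed stratum $\ma N\cong\wt{\cM}_{G'}$ (using that $\Aut(H)$ is trivial because $n_H>0$) rather than your family of $\wt{\ma S}_\gamma$, and treats the Eulerian/non-Eulerian cases of $P'$ uniformly in its smoothness step — none of which changes the substance.
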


\begin{proof} 
We proceed by induction on $d_G:=\dim\cM_G$. If $d_G=0$, the statement follows from Proposition~\ref{lem:easy} (iii). So we assume $d_G\ge1$.

By \eqref{eq:Sprimedef} we can assume $G=P$. By Proposition \ref{prop:b1} we can assume that $(G,G,s)$ is not basic  and, by  Proposition~\ref{lem:easy} (i) and (ii),  that $G$ has genus $g\ge2$ with $E(G)\ne\emptyset$. Using Lemma \ref{lem:specia}, we fix a stable graph $G'$ contracting to $G$ with $d_{G'}=d_G-1$, and a  spin structure $(P', s')$ on $G'$ such that $\Aut(G',P',s')=\Aut(G')$ and $\gamma_*(P',s')=(G,s)$ for  every contraction $\gamma\col G'\ra G$; recall that   $\gamma$ contracts exactly one edge. By Proposition \ref{thm-strata} and  Lemma~\ref{lem:specia}, we have
\begin{equation}\label{eq:cap}
\ma S_{(G',P',s')}=\ol{\cS}_{(G,G,s)}\cap \pi^{-1}(\cM_{G'}).
\end{equation}

Let $\nu\col \ol{\cM}^\nu_{G}\ra \ol{\cM}_{G}$ be the normalization and, with the notation in Subsection~\ref{rem:defospin}, consider
the following diagram, where the squares are all Cartesian.
\begin{eqnarray*}
\SelectTips{cm}{11}
\begin{xy} <16pt,0pt>:
\xymatrix{
\hcS_{(G,G,s)}\ar@{^(->}[r]\ar[dd]& \ol{\hcS}_{(G,G,s)}\ar[dd]_{\rho}\ar[r] & \ol{\cS}^{\nu}_{(G,G,s)}
\ar @/_1.4pc/[dd]_{\tau}
 \ar@{^(->}[d]\ar[r]& \ol{\cS}_{(G,G,s)} \ar@{^(->}[d]& \\
& & \ol{\ma S}^{\nu}_G  \ar[r]^{\nu^S}\ar[d]& \ol{\ma S}_G \ar@{^(->}[r]\ar[d]  & \Sgnbst\ar[d]_{\pi} \\
\wt{\cM}_G \ar@{^(->}[r] & \ol{\wt{\cM}}_G \ar[r]^{\chi} & \ol{\cM}^{\nu}_G  \ar[r]^{\nu}          & \ol{\cM}_{G}  \ar@{^(->}[r]& \ol{\cM}_{g,n} 
}        
\end{xy}
\end{eqnarray*}
Since $G=P$ we have 
$
\hcS_{(G,G,s)}=\tcS_{(G,G,s)}=\ma S_{(G,G,s)}\times_{\cM_G}\wt{\cM}_G.
$

\emph{Step 1.} 
$\ol{\hcS}_{(G,G,s)}$ is smooth at every point lying over $ \cM_{G'}$.

 Notice that $\ol{\hcS}_{(G,G,s)}\ra \ol{\ma S}^{\nu}_{(G,G,s)}$ is \'etale because so is
 $\chi$ (recall  \eqref{eq:tildebarra}) and $\ol{\cS}^{\nu}_{(G,G,s)}$ is union of irreducible components of $\ol{\cS}^{\nu}_G$.
Hence it suffices to show that
$\ol{\ma S}^{\nu}_G$ is smooth at any point $y^\nu$ lying over  $\cM_{G'} $. We will use Subsection~\ref{rem:defospin}  and  notation  \eqref{eq:Igamma}. Let $x'_\gamma\in \nu^{-1}(\cM_{G'})$ be the point over which $y^\nu$ lies; let $x'=\nu(x'_\gamma)\in \ol{\cM}_{g,n}$ and $y=\nu^S(y^\nu)\in\ol{\ma S}_{g,n}$.
 Locally at $y^\nu$  we have
\[
\wh {\ma O}_{\ol{\ma S}^{\nu}_G,y^\nu}\cong \frac{k[[t_1,\dots,t_{3g-3+n}]]}{\ma I_\gamma}\otimes_{k[[t_1,\dots,t_{3g-3+n}]]} k[[s_1,\dots,s_{3g-3+n}]],
\]
where the vanishing of $t_1$   corresponds to    locally  trivial deformations      at the node  contracted by $\gamma$. Let $\ma I$ be the ideal of $k[[t_1,\dots,t_{3g-3+n}]]$ generated by $t_{2},\dots,t_{3g-3+n}$,
then $\ma I_\gamma\subset \ma I$.  Locally at $x'$ and $y$, the map $\pi$ is induced by   $\pi^\#\col \wh{\ma O}_{\ol{\cM}_{g,n},x'}\ra  \wh{\ma O}_{\Sgnbst,y}$, given $\pi^\#(t_i)=s_i$ for $i\geq 2$, and   by $\pi^\#(t_1)=s^h_1$ with $h=1,2$ depending on, respectively, whether $P'=G'$   or $P'=G'-e'$  
(recall the proof of Lemma \ref{lem:specia}). Therefore $ \ol{\ma S}^{\nu}_G$ is smooth   at $y^\nu$.

\smallskip

\emph{Step 2.} There is a stratum, $\ma N$, of   $\ol{\wt{\cM}}_G$  such that $\rho^{-1}(\ma N)$ is irreducible.

 Consider    $\gamma\col G'\ra G$ and let $e' $ be the   contracted edge. Let $u_1,u_2$ be the vertices incident to $e'$, and $v_0 $ be the vertex of $G$ to which $e'$ is contracted. We introduce a  connected graph,  $H$,  having one edge and two vertices, $u^H_1,u^H_2$, such that, for $1=1,2$, we have
\[
w_H(u^H_i)=w_{G'}(u_i) 
\]
\[
\ell_H(u^H_i)=\deg_{G'}(u_i)+\ell_{G'}(u_i)-1. 
\]
Then $H$ is  a stable graph of genus $g_H=w_H(u^H_1)+w_H(u^H_2)$ with  $n_H=\ell_H(u^H_1)+\ell_H(u^H_2)$ legs. We consider the corresponding codimension-one stratum 
\[
\cM_H\subset \ol{\cM}_{g_H,n_H}=\ol{\cM}_{w_G(v_0),\deg(v_0)+\ell_G(v_0)}
\]
and define 
\[
\ma N:= \cM_H\times(\prod_{v\in V(G)\ssm \{v_0\}}\cM_{w_G(v),\deg_G(v)+\ell_G(v)}).
\] 
Then $\ma N$ can be identified with a codimension-one stratum of  $ \ol{\wt{\cM}}_G$, and the map $\chi\col \ol{\wt{\cM}}_G\ra \ol{\cM}^{\nu}_G$ sends this stratum onto one of the copies, written $\cM^{\gamma}_{G'}$, of $\cM_{G'}$ contained in the pre-image $\nu^{-1}(\cM_{G'})$
(recall from Subsection~\ref{rem:defospin}  that $\nu^{-1}(\cM_{G'})$ is the disjoint union of copies of $\cM_{G'}$, one for each contraction $G'\to G$). 
There is a natural map $\wt{\cM}_{G'}\ra\ma N$ which presents $\ma N$ as a quotient $\ma N=[\wt{\cM}_{G'}/\Aut(H)]$.
Since $\Aut(H)$ is trivial
(as $n_H>0$) we have 
$$\ma N\cong \wt{\cM}_{G'}.$$ 
Moreover, by \eqref{eq:cap}, we have an isomorphism 
$\tau^{-1}(\cM_{G'}^{\gamma})\cong \ma S_{(G',P',s')}$. Hence
\[
\rho^{-1}(\ma N)\cong \ma S_{(G',P',s')}\times_{\cM_{G'}}\wt{\cM}_{G'}\cong \hcS_{(G', P',s')},
\]
where the second isomorphism follows from Lemma~\ref{lm:quotient} (as $\Aut(G',P',s')=\Aut(G')$). 
By induction  $\hcS_{(G',P',s')}$ is irreducible, hence so is  $\rho^{-1}(\ma N)$.

\smallskip

\emph{Step 3.} $\hcS_{(G,G,s)}$ is irreducible.

It suffices to prove that $\ol{\hcS}_{(G,G,s)}$ is irreducible.
By contradiction, assume that $\ol{\hcS}_{(G,G,s)}$ has $p\geq 2$   irreducible components, written
 $\ma W_1,\dots,\ma W_p$. By Step 1, two different components, $\ma W_i$ and $\ma W_j$, do not intersect in $\rho^{-1}(\ma N)$, hence we have a disjoint union 
 $$\rho^{-1}(\ma N)=\sqcup_{i=1}^p \left(\ma W_i\cap \rho^{-1}(\ma N)\right).$$
 
 Now, every irreducible component of $\hcS_{(G,G,s)}$ surjects onto
 $ \widetilde{\cM}_G$, hence every $\ma W_i$ surjects  (via $ \rho$) onto 
 $ \ol{\widetilde \cM}_G$. 
  Therefore  each intersection  $\ma W_i\cap \rho^{-1}(\ma N)$ is not empty. This contradicts   the irreducibility of $\rho^{-1}(\ma N)$.
\end{proof}

The following  is a   consequence of Theorem~\ref{thm:irr} and Proposition~\ref{lem:commute}. 
\begin{thm}\label{cor:irr}
$\ma S_{(G,P,s)}$ is irreducible for any stable spin graph $(G,P,s)$. 
\end{thm}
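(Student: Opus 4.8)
The plan is to derive Theorem~\ref{cor:irr} as a direct quotient consequence of Theorem~\ref{thm:irr} together with the presentation established in Proposition~\ref{lem:commute}. Recall from Proposition~\ref{lem:commute} that for every stable spin graph $(G,P,s)$ we have an isomorphism of stacks
\[
\ma S_{(G,P,s)}=\left[\frac{\hcS_{(G,P,s)}}{\Aut(G,P,s)}\right].
\]
So $\ma S_{(G,P,s)}$ is the quotient of $\hcS_{(G,P,s)}$ by a finite group acting on it. Since the quotient of an irreducible topological space (or irreducible stack) by a group action is again irreducible — the quotient map is continuous and surjective, and the continuous image of an irreducible space is irreducible — the irreducibility of $\hcS_{(G,P,s)}$ proved in Theorem~\ref{thm:irr} immediately yields the irreducibility of $\ma S_{(G,P,s)}$.

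Concretely, the argument I would write is: by Theorem~\ref{thm:irr}, $\hcS_{(G,P,s)}$ is irreducible. The finite group $\Aut(G,P,s)$ acts on $\hcS_{(G,P,s)}$ (as described in Subsection~\ref{sec:presentation}), and by Proposition~\ref{lem:commute} the stack $\ma S_{(G,P,s)}$ is the quotient stack $[\hcS_{(G,P,s)}/\Aut(G,P,s)]$. The natural map $\hcS_{(G,P,s)}\to \ma S_{(G,P,s)}$ is surjective (it is the quotient map $\eta$ of the proof of Proposition~\ref{lem:commute}, which was shown there to be surjective since $\theta$ is built to hit every object of $\ma S_{(G,P,s)}(B)$). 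A surjective morphism from an irreducible space has irreducible image, hence $\ma S_{(G,P,s)}$ is irreducible.

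There is essentially no obstacle here: the only small point worth a sentence is to note that ``irreducible'' for these Deligne--Mumford stacks is checked on an atlas, and $\hcS_{(G,P,s)}$ (being itself built as a fiber product $\prod_{v} \tcS_{(\ol P_v,\ol P_v,s_v)}$ over the smooth charts $\wt{\cM}_{\ol P_v}$) provides such an atlas for $\ma S_{(G,P,s)}$ after dividing by $\Aut(G,P,s)$, so irreducibility of the total space transfers to irreducibility of the quotient. Thus the proof is just the single line: ``Immediate from Theorem~\ref{thm:irr} and Proposition~\ref{lem:commute}, since $\ma S_{(G,P,s)}$ is the quotient of the irreducible space $\hcS_{(G,P,s)}$ by the finite group $\Aut(G,P,s)$.'' I would not expect to need anything beyond what is already in the excerpt.
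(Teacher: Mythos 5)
Your proposal is correct and is exactly the paper's argument: the paper deduces Theorem~\ref{cor:irr} directly from Theorem~\ref{thm:irr} together with the quotient presentation $\ma S_{(G,P,s)}=[\hcS_{(G,P,s)}/\Aut(G,P,s)]$ of Proposition~\ref{lem:commute}, since a quotient of an irreducible space is irreducible. Your added remarks about surjectivity of $\eta$ and checking irreducibility on an atlas are fine but not needed beyond this one-line deduction.
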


\subsection{An application}
We here apply our methods  to compute the dimension of the space of sections of theta characteristics on general stable curves. This generalizes (and gives a new proof of)  a  fact well-known for a general smooth curve $X$, namely that 
$h^0(X,L)\leq 1$ for any theta characteristic $L$.

 \begin{prop}\label{h0app}
Let $G$ be a stable graph  and $X$ a general curve whose dual graph is $G$.  
Let  $(\wh X, \wh L)$ be a spin curve  with dual spin graph $(G,P,s)$.
\begin{enumerate}[(a)]
 \item
 \label{h0app1}
 If $|V(G/P)|=1$   then
 $h^0(\wh X, \wh L)\leq 1$.
 \item
 More generally,
 we have
\[
h^0(\wh X, \wh L)=\sum_{v\in V(G/P)} s(v) 
\]  
where the above sum  is over $\ZZ$ (not over $\ZZ/2\ZZ$).
 \end{enumerate}
\end{prop}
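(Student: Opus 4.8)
The plan is to reduce part (b) to part (a) applied to the connected components, and to obtain part (a) from the irreducibility result Theorem~\ref{thm:irr} together with semicontinuity. First I would observe that, writing $\{(\oP_v, s_v),\, v\in V(G/P)\}$ for the connected components of $(G,P,s)$ and $X^\nu_R = \sqcup_v Z_v$ for the corresponding decomposition of the partial normalization (as in Definition~\ref{dsg}), one has $h^0(\wh X,\wh L) = \sum_{v\in V(G/P)} h^0(Z_v, (L_R)_{|Z_v})$, exactly as in Remark~\ref{rem:chm}. Each $Z_v$ is a general curve with dual graph $\oP_v$ (generality of $X$ passes to the components), and $(L_R)_{|Z_v}$ is a theta characteristic of $Z_v$ with dual spin graph $(\oP_v, \oP_v, s_v)$, so $|V(\oP_v/\oP_v)|=1$. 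Hence part (b) follows from part (a) once we know that for the parity-one components $h^0 = 1$ and for the parity-zero components $h^0 = 0$; and since parity equals $h^0 \bmod 2$, it is enough to prove the bound $h^0(\wh X, \wh L)\le 1$ of part (a).

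So the core is part (a). Here $G = P$, i.e.\ the spin graph is of the form $(G,G,s)$, and $\wh L = L$ is an honest theta characteristic on $X$ (no exceptional components). The strategy is: the locus $\ma S_{(G,G,s)} \subset \Sgnbst$ is irreducible by Theorem~\ref{cor:irr}, and the function $y \mapsto h^0(X_y, L_y)$ is upper semicontinuous on it and constant modulo $2$ (it equals the parity $\sum_v s(v) \bmod 2$, which is fixed on $\ma S_{(G,G,s)}$). The map $\ma S_{(G,G,s)} \to \cM_G$ is finite and étale of degree $|\cS^0_X|$, and $\cM_G$ is irreducible with a dense open subset over which $h^0$ takes its generic (minimal) value $m$. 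Over a general point of $\cM_G$ — which is what ``general curve'' means — every point of the fiber computes $h^0 = m$ by irreducibility of $\ma S_{(G,G,s)}$ combined with the fact that the minimal-$h^0$ locus is open and dense and hence meets every fiber over the general point. Thus it suffices to exhibit one curve $X_0$ with dual graph $G$ carrying a theta characteristic with dual spin graph $(G,G,s)$ and $h^0\le 1$; by semicontinuity the general curve then satisfies $h^0 \le 1$ as well, and combined with the parity constraint this pins down $h^0$ to be $0$ or $1$ according to the parity of $s$.

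For the existence of such an $X_0$, the plan is to degenerate: pick any contraction $G_{g,n} \le G$ and follow the chain of contractions down to a $3$-regular graph $\wt G$ (the maximal elements of $\Sgn$), or more directly use the basic graphs of Section~\ref{sec:irr}. For a curve $X_0$ whose dual graph is $3$-regular or basic, Lemma~\ref{lem:thetaform} gives explicit formulas for all odd theta characteristics $L_{X_0,I}$ with $h^0 = 1$ and all even ones $M_{X_0,I}$ with $h^0 = 0$; in particular every theta characteristic on such $X_0$ has $h^0 \le 1$. Then one runs the semicontinuity argument along a one-parameter smoothing of $X_0$ inside the appropriate stratum closure $\ov{\cS}_{(G,G,s)}$ — using Proposition~\ref{thm-strata} to know the stratum $\ma S_{(\wt G,\cdot,\cdot)}$ lies in $\ov{\cS}_{(G,G,s)}$ — to conclude that the generic curve with dual graph $G$ also has a theta characteristic with $h^0 \le 1$ in each parity.

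\textbf{Main obstacle.} The delicate point will be the interplay between ``general curve in $\cM_G$'' and the finite étale cover $\ma S_{(G,G,s)} \to \cM_G$: one needs that the minimal value of $h^0$ over the whole irreducible space $\ma S_{(G,G,s)}$ is achieved on the fiber over a general point of $\cM_G$, not merely over a general point of $\ma S_{(G,G,s)}$. This is where irreducibility of $\ma S_{(G,G,s)}$ (Theorem~\ref{cor:irr}) is essential: it forces the dense open locus $\{h^0 = m\}$ to surject onto $\cM_G$, so its complement maps into a proper closed subset of $\cM_G$, and the general curve avoids it. The second, more computational, obstacle is checking that the degeneration to a basic or $3$-regular dual graph can be chosen compatibly with the prescribed parity $s$ — but this is handled by Proposition~\ref{Spingraphprop} (surjectivity of $\psi$) together with the surjectivity of the contraction maps on spin graphs from Lemma~\ref{ontoS}, so that $(G,G,s)$ does lie above a basic or $3$-regular spin graph in the poset $[\Spgn]$, and hence $\ma S_{(G,G,s)}$ lies in the closure of the corresponding stratum where Lemma~\ref{lem:thetaform} applies.
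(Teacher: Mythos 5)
Your overall skeleton is essentially the paper's: part (b) is reduced to part (a) through the decomposition $h^0(\wh X,\wh L)=\sum_v h^0(Z_v,(L_R)|_{Z_v})$, and part (a) is proved by degenerating inside $\ov{\cS}_{(G,G,s)}$ to a boundary stratum where $h^0\le 1$ can be computed explicitly, then using upper semicontinuity, irreducibility (Theorem~\ref{cor:irr}) and finiteness of $\pi$ to transfer the bound to \emph{every} spin structure over a general curve in $\cM_G$; your ``main obstacle'' paragraph states this transfer correctly, and it is indeed where irreducibility enters. The difference is how the degeneration is organized: the paper runs an induction on $\dim\cM_G$, refining one edge at a time via Lemma~\ref{lem:specia}, with base cases the $3$-regular graphs ($\dim\cM_G=0$) and the basic graphs (Lemma~\ref{lem:thetaform}), whereas you propose to jump in one step to a stratum lying over a $3$-regular or basic graph.

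That jump is the genuine gap. You need a stable spin graph $(\wt G,\wt P,\wt s)$ with $\wt G$ $3$-regular (or basic) and $(\wt G,\wt P,\wt s)\ge (G,G,s)$, so that $\cS_{(\wt G,\wt P,\wt s)}\subset\ov{\cS}_{(G,G,s)}$ by Proposition~\ref{thm-strata}. Neither of the results you cite provides this: Lemma~\ref{ontoS} only constructs the pushforward $\gamma_*$ along a contraction and says nothing about \emph{lifting} a prescribed spin structure $(G,s)$ along a refinement $\wt G\to G$ (it does not assert surjectivity of $\gamma_*$ on spin structures), and Proposition~\ref{Spingraphprop} is about realizing spin graphs by spin curves, not about the poset $[\Spgn]$. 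Your final sentence even reverses the order (you say $(G,G,s)$ lies above a $3$-regular spin graph and that $\ma S_{(G,G,s)}$ lies in the closure of that stratum; it must be the other way around). The lifting statement you need is plausibly true but requires a real combinatorial construction: one must choose $\wt P\in\cC_{\wt G}$ with $\gamma_*\wt P=G$, arrange all $\wt P$-degrees to be even inside the local $3$-regular expansions of the vertices of $G$, and place the odd values of $\wt s$ on components of positive genus. That this is delicate is visible in the paper itself: the one-edge refinement of Lemma~\ref{lem:specia} cannot always be kept Eulerian (the case $\deg(v)=4$, $w(v)+\ell(v)=0$ forces $P'=G'-e'\ne G'$), which is precisely why part (a) is stated for $|V(G/P)|=1$ rather than for $P=G$ and why the induction passes back through $(\oP,\oP,s)$. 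A smaller misattribution: Lemma~\ref{lem:thetaform} covers only basic graphs; over a $3$-regular graph one has $P\ne G$ in general, and the bound $h^0\le 1$ comes instead from the elementary observation (the paper's $d_G=0$ base case) that each component $Z_v$ is a smooth rational curve or a cycle of rational curves. If you replace the one-shot jump by the paper's Lemma~\ref{lem:specia} induction, or supply a proof of the lifting claim above, your argument goes through.
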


\begin{proof}
 We set, as usual,  $R=E\smallsetminus P$ and  $X_R^\nu=\sqcup_{v\in V(G/P)}Z_v$, and we view $Z_v$ as a stable   curve whose dual graph is $\oP_v$.
Our spin curve $(\wh X, \wh L)$ corresponds to a pair $(X_R^\nu,L_R)$, where    $L_R=\wh L|_{X^\nu_R}$ and $L_R^{\otimes 2}\cong \omega_{X^\nu_R}$. We have
\[
h^0(\wh X, \wh L)=h^0(X_R^\nu,L_R)=\sum_{v\in V(G/P)}h^0(Z_v,(L_R)|_{Z_v})  
\]
 (see   Remark \ref{rem:chm}).
Therefore the second part of the statement follows from the first.  We proceed 
  by induction on $d_G:=\dim \ma M_G$. 
  
  If $d_G=0$ the graph  $G$ is  3-regular, and
  one easily checks that  $Z_v$ is either a smooth rational curve, or a cycle of smooth rational curves.  Therefore we have     $h^0(Z_v,(L_R)|_{Z_v})\leq 1$ and both parts of the statement hold. 
Observe that the same reasoning proves the statement for $g\le 1$. 

Now we assume $d_G>0$ 
and  $V(G/P)=\{v\}$. 
Then $Z_v=X_R^\nu$ and  its dual graph is $\oP$.
It suffices to prove  \eqref{h0app1} for the spin graph $(\oP,\oP,s)$.
 
By what we   observed, we can assume $\oP$ has genus $\geq 2$.
If  $\oP$ is a basic graph, then we are done by  Lemma~\ref{lem:thetaform}.

Otherwise, we  shall use a degeneration argument.
If  $E(\oP)\neq \emptyset$, we can  apply Lemma~\ref{lem:specia}, which
gives us a stable spin graph $(G',P',s')$ which contracts to $(\oP,  \oP, s)$.
If $\oP$ has no edges, we can consider a stable spin graph $(G',P',s')$ 
where $G'$ has one vertex and one loop, so that $(G',P',s')$  contracts to $(\oP,  \oP, s)$. In both cases we have     $d_{G'}>d_{\oP}$.
    Let now $X'$ be a general curve with $G'$ as dual graph, and let $(\wh X', \wh L')$ be a spin curve on $X'$ with $(G',P',s')$ as dual spin graph. Since  $|V(G'/P')|=1$ we can use the induction hypothesis, yielding $h^0(\wh X',\wh L')\le 1$.
    
     By Proposition \ref{thm-strata} we have   $ \mathcal S_{(G',P',s')} \subset \overline{\mathcal S}_{(\ol P,\ol P,s)}$, hence the statement follows from the upper semicontinuity of cohomology.
\end{proof}

\section{The tropicalization of $\Sgnbst$}\label{sec:tropSg}

\subsection{The tropicalization map}
Let   $\mathcal Y$ be a proper 
Deligne-Mumford stack over $k$ and let $Y$ be its coarse moduli space. We write
 $Y^{\an}$ for the  Berkovich analytification,  defined in \cite{Berkovich}.
 Since $\ma Y$ is proper, 
for any  non-Archimedean field extension $K$ of the trivially valued field $k$, a $K$-point 
of  $Y^{\an}$  is represented  by a morphism $\Spec R\to \ma Y$, where $R$ is the valuation ring of $K$. 
We denote by $\text{val}_K$ the valuation of $K$ and assume $K$ complete.

We shall consider the analytifications $\Mgnban$ and  $\Sgnban$.
Let $x^{\an}$ be a point in $\Mgnban$. Then, up to field extension, $x^{\an}$ can be represented by a    stable $n$-pointed  curve  $\ma X\ra \Spec R$, where $R$ is  as above. Let $X$ be the reduction over the closed point of $\Spec R$, and  $G$   the dual graph of $X$.  The (well known)   tropicalization map 
\[
\Trop_{\ol{\cM}_{g,n}}\colon \Mgnban\la \ol{M}_{g,n}^{\trop}
\]
takes $x^{\an}$ to the class of the tropical curve $\Gamma=(G,\ell)$, where,
 given a node, $e$, of $X$ and an \'etale neighborhood  where the local equation of $\ma X$ at $e$ is $xy=f_e$, for $f_e\in R$, we have 
$
\ell(e)=\text{val}_K(f_e).
$

Similarly, a point $y^{\an}$ in $\Sgnban$ is represented by a  stable $n$-pointed spin curve  $(\wh{\ma X}, \wh{\ma L})\ra \Spec R$. 
We let $(\hX, \wh{L})$ be the reduction  over $k$ and   write  $X$ for the stable model of $\hX$.
Keeping this notation, we state the following

\begin{defilemma}
\label{defitropS}
We have a   {\emph {tropicalization}} map 
\[
\Trop_{\Sgnbst}\colon \Sgnban\la \ol{S}_{g,n}^{\trop}
\] 
taking $y^{\an}\in \Sgnban$  to the   spin tropical curve $(\Gamma,P,s)$, where $\Gamma=(G,\ell)$ with
 $(G,P,s)$ the dual spin graph of $(\wh X, \wh{L})$, and $\ell$ is defined as follows.
For every  node  $e$   of $X$, let  $e'$ be a node of $\wh X$ lying over   $e$, and let $xy=h_e$ be an \'etale-local equation for $\wh {\ma X}$ at $e'$, with    $h_e\in R$, then 
 $$
\ell(e)={\text{val}}_K(h_e).
$$

\end{defilemma}

\begin{proof}
The map $\Trop_{\Sgnbst}$ is obviously well defined if 
 there is only one node, $e'$, of $\hX$ lying over the node $e$ of $X$.
 Suppose  we have two  nodes, $e',e''$    lying over $e$. By the description of the universal deformation of a spin curve given in \cite[Sect. 3.2]{CCC07} (in particular  Eq. (4)), there are \'etale neighborhoods of $\widehat {\ma X}$ around $e'$ and $e''$ in which the local equations of $\widehat{\ma X}$ are respectively $xy=h_e$ and $x'y'=h_e$, for the same  $h_e\in R$.
Hence 
$\Trop_{\Sgnbst}$ is well defined.
One can show as in  \cite[Lem-Def. 2.4.1.]{Viv13} that $\Trop_{\Sgnbst}$ is independent of the choice of $R$ and of the local equation.  
\end{proof}

The   map $\pi\colon \Sgnbst\ra \ol{\cM}_{g,n}$ induces a   map of Berkovich analytic spaces,
$$
\pi^{\an}\col\Sgnban \longrightarrow \Mgnban,
$$
defined as follows. Let   $y^{\an}\in \Sgnban$  be represented by    $(\wh{\ma X}, \wh{\ma L})\to \Spec R$,
and    let $\psi\col \Spec R\to \Sgnbst$ be the corresponding map.
 Then $x^{\an}:=\pi^{\an}(y^{\an})$ 
 is  represented by the morphism $\pi\circ\psi\col \Spec R\ra \ol{\cM}_{g,n}$.
 It is clear that
  $x^{\an}$ is represented by  the stable model,   $\cX\to \Spec R$, of  $\wh{\ma X}\to \Spec R$.

We shall prove in Theorem~\ref{thm:skeleton} that the tropicalization maps are compatible with $\pi^{\trop}$ and $\pi^{\an}$ i.e. 
 that
 $\pi^{\trop}\circ \Trop_{\Sgnbst}= \Trop_{\Mgnbst}\circ \pi^{\an}$.

 \subsection{The skeleton of $\Sgnbst$}

We recall some results for toroidal embeddings of Deligne-Mumford stacks from  \cite[Sect. 6]{ACP15}.

Let $\mathcal Y$ be  as above and assume that $U\subset \ma Y$ is a toroidal embedding of Deligne-Mumford stacks. Given a scheme $V$ and an \'etale morphism $V\ra \ma Y$, one considers the group, $\MON_V$, of Cartier divisors on $V$ supported on $V\smallsetminus U_V$, where $U_V=U\times_{\ma Y} V$, and  the submonoid  $\EFF_V\subset \MON_V$ of effective divisors. Let $\MON_{\ma Y}$ and $\EFF_{\ma Y}$ be the \'etale sheaves on $\ma Y$ associated to these presheaves.  

Given a stratum $W\subset \ma Y$ and a geometric point $y$ of $W$, there is a natural action of the \'etale fundamental group $\pi_1^{et}(W,y)$ on the stalk $\MON_{\ma Y,y}$ that preserves the stalk $\EFF_{\ma Y,y}\subset \MON_{\ma Y,y}$. The \emph{monodromy group} $H_W$ is the image of $\pi_1^{et}(W,y)$ in $\text{Aut}(\MON_{\ma Y,y})$.
We define the extended cone
\begin{equation}\label{E:coneW}
\ol{\sigma}_W:=\Hom (\EFF_{\ma Y,y},\ol{\mathbb R}_{\geq0}).
\end{equation}
The \emph{skeleton} $\ol{\Sigma}(\ma Y)$ of $\ma Y$ is the extended generalized cone complex
\begin{equation}\label{E:GenCone}
\ol{\Sigma}(\ma Y):=\lim_{\longrightarrow}\ol{\sigma}_W,
\end{equation}
where $\ol{\sigma}_W\ha\ol{\sigma}_{W'}$ for $W'\subset \ol{W}$, 
and the automorphisms of $\ov{\sigma}_W$ are induced by the elements of the monodromy group $H_W$.  

There is a remarkable retraction,   ${\bf p}_{\mathcal Y}: Y^{\an}\ra \ol{\Sigma}(\mathcal Y)$, described as follows. Assume that the point $y^{\an}$ of $Y^{\an}$ is represented by a map $\psi\col \Spec R\to \ma Y$. Let $y\in \ma Y$ be the image of the closed point of $\Spec R$ and let $W$ be the stratum of $\ma Y$ containing $y$. We have a chain of maps
\begin{equation}
 \label{chain}
\xymatrix@=.4pc{
\EFF_{\ma Y,y}    \ar[rrr]^{\epsilon}   &&& \widehat{\ma O}_{\ma Y,y}\ar[rrr]^{\psi^\#}  &&& R\ar[rrrr]^{\val_K}  &&&& \ol{\mathbb R}_{\geq0}   
}
\end{equation}
where $\epsilon$ is the map that takes an effective divisor to its local equation. 
Then ${\bf p}_{\ma Y}(y^{\an})\in \ol{\Sigma}(\ma Y)$ is the equivalence class of the  homomorphism  
\eqref{chain},
 which is an element of $\ol{\sigma}_{W}$. 

The inclusion $\cM_{g,n}\subset \ol{\cM}_{g,n} $ is a toroidal embedding of Deligne-Mumford stacks (see \cite[Sect.  3.3]{ACP15}),
so we can consider the corresponding skeleton   $\ol{\Sigma}(\ol{\ma{M}}_{g,n})$. As for $\Sgnbst$, we have a similar situation.

\begin{prop}\label{lem:tormap}
The inclusion $\ma S_{g,n}\subset \Sgnbst$ is a toroidal embedding of Deligne-Mumford stacks. 

The map $\pi\colon \Sgnbst\ra \ol{\cM}_{g,n}$ is a toroidal morphism of (toroidal) Deligne-Mumford stacks.

The map $\pi^{\an}\col \Sgnban\ra \Mgnban$ restricts to a map of extended generalized cone complexes $\ol{\Sigma}(\pi)\col  \ol{\Sigma}(\ol{\ma{S}}_{g,n}) \la  \ol{\Sigma}(\ol{\ma{M}}_{g,n})$. 
\end{prop}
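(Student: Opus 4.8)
The statement has three parts, and the plan is to address them in turn, with the first two being largely local/combinatorial and the third a formal consequence.

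\emph{Toroidal embedding.} First I would show that $\ma S_{g,n}\subset \Sgnbst$ is toroidal. The local model of $\Sgnbst$ at a point $y$ lying over $x\in\ol{\cM}_{g,n}$ has already been described in Subsection~\ref{rem:defospin}: we have $\wh{\ma O}_{\Sgnbst,y}\cong k[[s_1,\dots,s_{3g-3+n}]]$, and the boundary divisor $\Sgnbst\smallsetminus\ma S_{g,n}$ is, locally at $y$, the vanishing locus of $s_1\cdots s_\delta$, where $\delta$ is the number of nodes of the stable model $X$ (equivalently, the number of $s_i$'s that get squared or not under $\pi^\#$). This is precisely the local picture of a toroidal embedding: étale-locally it looks like a toric variety with its torus. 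One also needs to check that the boundary is a normal crossings divisor compatible with the stack structure; this follows from the explicit description of the deformation space in \cite{CCC07} (cf. the proof of Lemma~\ref{purity}), together with the fact that $\ma S_{g,n}\subset\Sgnbst$ is an open embedding of smooth Deligne-Mumford stacks (smoothness of $\Sgnbst$ being classical, e.g. \cite{cornalba}, \cite{J00}). The key point is that the stratification by the loci $\ma S_{(G,P,s)}$ of Theorem~\ref{thm:strata} is exactly the toroidal stratification, each stratum being smooth (by irreducibility, Theorem~\ref{cor:irr}, plus smoothness of $\Sgnbst$).

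\emph{Toroidal morphism.} Next I would show $\pi\colon\Sgnbst\ra\ol{\cM}_{g,n}$ is toroidal. By definition one must check that, étale-locally at $y$ and $x=\pi(y)$, the map $\pi$ is given by a toric morphism of the ambient toric charts carrying strata to strata. But this is immediate from the explicit formula $\pi^\#(t_i)=s_i^2$ for $i\le r$ and $\pi^\#(t_i)=s_i$ for $i>r$ recalled in Subsection~\ref{rem:defospin}: this is a monomial map between the polynomial rings, hence corresponds to a toric morphism $\bbA^{3g-3+n}\ra\bbA^{3g-3+n}$, $(s_i)\mapsto(s_i^{a_i})$ with $a_i\in\{1,2\}$, which sends the toric boundary into the toric boundary and torus into torus. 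Compatibility with the stratifications is the content of Proposition~\ref{thm-strata} (strata of $\Sgnbst$ map to strata of $\ol{\cM}_{g,n}$, governed by the forgetful map of posets $[\Spgn]\to\Sgn$ of Proposition~\ref{Spingraphprop}).

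\emph{Induced map of skeletons.} Finally, given that $\ma S_{g,n}\subset\Sgnbst$ and $\cM_{g,n}\subset\ol{\cM}_{g,n}$ are toroidal embeddings and $\pi$ is a toroidal morphism, the functoriality of the skeleton construction established in \cite[Sect. 6]{ACP15} applies directly: a toroidal morphism $\ma Y\to\ma Y'$ induces a morphism of extended generalized cone complexes $\ol\Sigma(\ma Y)\to\ol\Sigma(\ma Y')$ which is compatible with the retractions ${\bf p}_{\ma Y}$ and ${\bf p}_{\ma Y'}$, and at the level of analytifications this morphism is the restriction of $\pi^{\an}$ to the skeletons. I would cite this and spell out that the pulled-back $\MON$- and $\EFF$-sheaves behave as required (the dual map on monoids $\EFF_{\ol{\cM}_{g,n},x}\to\EFF_{\Sgnbst,y}$ sends a local equation $t_i$ to $s_i$ or $s_i^2$, hence lands in $\EFF$ and is monodromy-equivariant).

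\textbf{Main obstacle.} The genuinely substantive point is the verification that $\ma S_{g,n}\subset\Sgnbst$ is toroidal in the stacky sense — i.e. that the étale-local charts are genuinely toric and the monodromy is the expected finite-group action coming from $\Aut(G,P,s)$. Once the local structure of $\Sgnbst$ (from \cite{CCC07}, \cite{ACG11}, \cite{cornalba}, \cite{J00}) is in hand, the morphism being toroidal and the induced map on skeletons are formal, so I expect the write-up to lean heavily on those references and on the stratification results (Theorems~\ref{thm:strata} and \ref{cor:irr}) already proved, with the deformation-theoretic computation of Subsection~\ref{rem:defospin} doing the real work.
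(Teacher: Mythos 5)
Your proposal is correct and follows essentially the same route as the paper: the local coordinates of Subsection~\ref{rem:defospin} show the boundary is $\prod_{i=1}^{\delta}s_i=0$ in a smooth chart (giving the toroidal embedding), the monomial formula $\pi^\#(t_i)=s_i^2$ or $s_i$ gives toroidality of $\pi$, and the induced map of skeletons is quoted from \cite[Prop.~6.1.8]{ACP15}. The only difference is that you invoke the stratification and irreducibility results (Theorems~\ref{thm:strata}, \ref{cor:irr}) here, which are not needed for this proposition --- in the paper they enter only later, in the monodromy computation of Theorem~\ref{thm:skeleton}.
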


\begin{proof}
Let $x$ and $y$ be closed points of $\ol{\cM}_{g,n}$ and $\Sgnbst$ such that  $x=\pi(y)$. 
Let $S_{(G,P,s)}$ be the stratum containing $y$.
With  the notation of  Subsection~\ref{rem:defospin}, we write $E=\{e_1,\ldots, e_{\delta}\}$
so that $P=\{e_{r+1},\ldots, e_{\delta}\}$. 
 
Locally at $y$, the boundary divisor $\Sgnbst\smallsetminus \ma S_{g,n}$ is given by $\prod_{i=1}^\delta s_i=0$, so $\ma S_{g,n}\subset \Sgnbst$ is a toroidal embedding of Deligne-Mumford stacks. 

Next, $\pi$ is toroidal because, locally at $x$ and $y$, it is induced by the ring homomorphism $\pi^\#\col\wh {\ma O}_{\ol{\cM}_{g,n},x}\ra \wh {\ma O}_{\Sgnbst,y}$ given by $\pi^\#(t_i)=s_i^2$ for $i\le r$, and $\pi^\#(t_i)=s_i$ for $i>r$. 
The last part follows  from  \cite[Prop. 6.1.8]{ACP15} 
 \end{proof}

We denote by $\Sigma(\Mgnbst)$, respectively 
$\Sigma(\Sgnbst)$,  the generalized cone complex obtained by restricting $\ol{\Sigma}(\Mgnbst)$, resp.   $\ol{\Sigma}(\Sgnbst)$,
to its ``finite part''.

By \cite[Thm. 1.2.1]{ACP15}, there is an isomorphism of extended generalized cone complexes 
$
\ol{\Phi}_{\ol{\cM}_{g,n}}\col \ol{\Sigma}(\ol{\ma{M}}_{g,n})\la \ol{M}_{g,n}^{\trop}
$
which restricts to an isomorphism
$\Sigma(\Mgnbst)\cong \Mgnt$, and such that 
 $\Trop_{\Mgnbst}=\ol{\Phi}_{\ol{\cM}_{g,n}}\circ {\bf p}_{\Mgnbst}$.

\begin{thm}\label{thm:skeleton}
 There is an isomorphism of extended generalized cone complexes, $\ol{\Phi}_{\Sgnbst}\col  \ol{\Sigma}(\ol{\ma {S}}_{g,n}) \la  \ol{S}_{g,n}^{\trop}$, 
which restricts to an isomorphism $\Sigma(\ol{\ma {S}}_{g,n})\cong   S_{g,n}^{\trop}$.
Moreover, the following diagram is  commutative 
\begin{eqnarray*}
\SelectTips{cm}{11}
\begin{xy} <16pt,0pt>:
\xymatrix{
\Sgnban
\ar[d]_{\pi^{\an}} \ar[rr]^{{\bf p}_{\ol{\ma {S}}_{g,n}}\;}   
  && \ar[rr]^{\ol{\Phi}_{\Sgnbst}} \ar[d]_{\ol{\Sigma}(\pi)} \ol{\Sigma}(\ol{\ma{S}}_{g,n}) && \ol{S}_{g,n}^{\trop}   \ar[d]_{\pi^{\trop}} \\
\Mgnban  
\ar[rr]^{{\bf p}_{\ol{\ma {M}}_{g,n}}\;}            &&   \ar[rr]^{\ol{\Phi}_{\ol{\cM}_{g,n}}}         \ol{\Sigma}(\ol{\ma{M}}_{g,n})  && \ol{M}_{g,n}^{\trop}  
 }
\end{xy}
\end{eqnarray*}
and  $\Trop_{\Sgnbst}=\ol{\Phi}_{\Sgnbst}\circ{\bf p}_{\ol{\ma S}_{g,n}}$.
\end{thm}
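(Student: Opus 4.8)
The plan is to construct the isomorphism $\ol{\Phi}_{\Sgnbst}$ by combining the combinatorial description of the skeleton with the description of the cone complex $\Sgntb$ given in Section~\ref{sec:spingraphs}. First I would recall that, by the general theory of toroidal embeddings recalled above, the skeleton $\ol{\Sigma}(\Sgnbst)$ is the colimit over the poset of strata of the extended cones $\ol{\sigma}_W = \Hom(\EFF_{\Sgnbst,y},\ol{\RR}_{\geq0})$, with gluing maps given by inclusions of faces and automorphisms given by monodromy groups $H_W$. Using Theorem~\ref{thm:strata} (decomposition \eqref{Sgnstrat} is a graded stratification), the strata of $\Sgnbst$ are exactly the $\ma S_{(G,P,s)}$ for $[G,P,s]\in[\Spgn]$, so the poset of strata is $[\Spgn]$, matching the poset indexing the cones $\ov\sigma_{(G,P,s)}$ in the construction of $\Sgntb$. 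For each stratum, the local analysis of Subsection~\ref{rem:defospin} (in particular \eqref{eq:m} and the subsequent coordinate description) shows that $\EFF_{\Sgnbst,y}$ is the free monoid on $\{s_1,\dots,s_\delta\}$ where $\delta = |E(G)|$, so $\ov\sigma_W \cong \ol{\RR}^{E(G)}_{\geq 0} = \ov\sigma_{(G,P,s)}$ canonically; the gluing maps under contraction of spin graphs correspond on both sides to the face inclusions $\iota_\gamma$. The only remaining point is to identify the monodromy group $H_W$ acting on $\ov\sigma_W$ with $\Aut(G,P,s)$ acting on $\ov\sigma_{(G,P,s)}$: this uses the presentation in Proposition~\ref{lem:commute}, $\ma S_{(G,P,s)} = [\hcS_{(G,P,s)}/\Aut(G,P,s)]$, together with the irreducibility of $\hcS_{(G,P,s)}$ from Theorem~\ref{thm:irr}, which guarantees that $\hcS_{(G,P,s)}$ is the (connected) normalization and hence that $H_W = \Aut(G,P,s)$. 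Taking the colimit of these compatible identifications yields $\ol{\Phi}_{\Sgnbst}$, and restricting to the finite parts gives $\Sigma(\Sgnbst)\cong\Sgnt$.

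Next I would establish the commutativity of the diagram. The square on the left, relating $\pi^{\an}$, ${\bf p}_{\Sgnbst}$, ${\bf p}_{\Mgnbst}$ and $\ol\Sigma(\pi)$, commutes by functoriality of the retraction onto the skeleton under toroidal morphisms, which is exactly \cite[Prop.~6.1.8]{ACP15} and was already invoked in the proof of Proposition~\ref{lem:tormap}. For the right square, I need to check that $\pi^{\trop}\circ\ol{\Phi}_{\Sgnbst} = \ol{\Phi}_{\ol{\cM}_{g,n}}\circ\ol{\Sigma}(\pi)$. On a cone $\ov\sigma_{(G,P,s)}\to\ov\sigma_G$ the map $\ol\Sigma(\pi)$ is, by the local description $\pi^\#(t_i)=s_i^2$ for $i\le r$ (the exceptional nodes, i.e.\ those in $R = E\smallsetminus P$ once one accounts for the blow-up) and $\pi^\#(t_i)=s_i$ otherwise, precisely the integral linear map sending $e\mapsto 2e$ for $e\in E\smallsetminus P$ and $e\mapsto e$ for $e\in P$; this is exactly the map $T$ appearing in the proof of Proposition~\ref{prop:pitrop} defining $\pi^{\trop}$. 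Hence the two composites agree cone by cone, and therefore on the colimit.

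Finally, the identity $\Trop_{\Sgnbst} = \ol{\Phi}_{\Sgnbst}\circ{\bf p}_{\Sgnbst}$ follows by unwinding definitions: given $y^{\an}$ represented by $(\wh{\ma X},\wh{\ma L})\to\Spec R$ with reduction $(\wh X,\wh L)$ of dual spin graph $(G,P,s)$, the chain \eqref{chain} sends the local equation of the boundary divisor at the node $e$ — which by the local analysis and Lemma~\ref{purity} is (an \'etale-local form of) $h_e$ with $\val_K(h_e) = \ell(e)$ as in Definition/Lemma~\ref{defitropS} — to the coordinate $\ell(e)$ of the corresponding point of $\ov\sigma_{(G,P,s)}$; applying $\ol{\Phi}_{\Sgnbst}$ returns the class of $(\Gamma,P,s)$ with $\Gamma=(G,\ell)$, which is exactly $\Trop_{\Sgnbst}(y^{\an})$. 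The main obstacle I anticipate is the monodromy computation $H_W = \Aut(G,P,s)$: one must argue carefully that the \'etale fundamental group of the stratum $\ma S_{(G,P,s)}$ acts on the set of branches/local boundary data through exactly the automorphisms of the spin graph and nothing more, which is where the irreducibility result Theorem~\ref{thm:irr} (via the presentation of Proposition~\ref{lem:commute} and the normalization diagram in its proof) is essential — without connectedness of $\hcS_{(G,P,s)}$ one could not conclude that the cover trivializing the monodromy is connected with deck group exactly $\Aut(G,P,s)$.
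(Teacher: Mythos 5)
Your proposal is correct and follows essentially the same route as the paper: identify each $\ol{\sigma}_W$ with $\ol{\sigma}_{(G,P,s)}$ via the local coordinates of Subsection~\ref{rem:defospin}, match the gluing data using Proposition~\ref{thm-strata}, and pin down the monodromy as $\Aut(G,P,s)$ via Proposition~\ref{lem:commute} together with the irreducibility (connectedness) of $\hcS_{(G,P,s)}$ from Theorem~\ref{thm:irr}, then deduce the compatibilities from \cite[Prop.~6.1.8]{ACP15} and the local description $\pi^\#(t_i)=s_i^2$ or $s_i$. The only cosmetic difference is that the paper checks the outer rectangle $\pi^{\trop}\circ\Trop_{\Sgnbst}=\Trop_{\ol{\cM}_{g,n}}\circ\pi^{\an}$ pointwise via valuations, while you verify the right-hand square cone by cone through the linear map $T$ of Proposition~\ref{prop:pitrop}; the computation is the same.
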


\begin{proof}
 Recall our definition,
$
\ol{S}_{g,n}^{\trop}=\underset{\longrightarrow}{\text{lim}}\ \ol{\sigma}_{(G,P,s)}
$, 
where the right side is the colimit of the diagram of  extended cones $\ol{\sigma}_{(G,P,s)}$, with $\ol{\sigma}_{(H,Q,s')}\ha  \ol{\sigma}_{(G,P,s)}$ for $(G,P,s)\ge(H,Q,s')$, and the automorphisms of   $\ol{\sigma}_{(G,P,s)}$ are induced by   $\text{Aut}(G,P,s)$. 

Consider a stratum $W=\ma S_{(G,P,s)}$ of $\Sgnbst$. Given a point $y\in W$, by Subsection~\ref{rem:defospin} we have an isomorphism of monoids $\EFF_{\Sgnbst,y}\ra \mathbb Z_{\ge 0}^{E}$, so by \eqref{E:coneW}, the extended cone  $\ol{\sigma}_W$  is naturally isomorphic to $\ol{\sigma}_{(G,P,s)}$. 
We can thus identify them and  rewrite  \eqref{E:GenCone} as follows
\[
\ol{\Sigma}(\ol{\ma {S}}_{g,n})=\underset{\longrightarrow}{\text{lim}}\ \ol{\sigma}_{(G,P,s)},
\]
where $\ol{\sigma}_{(H,Q,s')}\ha \ol{\sigma}_{(G,P,s)}$ for $\ol{\ma S}_{(G,P,s)}\subset \ol{\cS}_{(H,Q,s')}$, and the automorphisms of  $\ol{\sigma}_{(G,P,s)}$ are induced by the elements of the monodromy group $H_{\ma S_{(G,P,s)}}$. 

By  Proposition \ref{thm-strata}, the existence of an isomorphism $\ol{\Phi}_{\Sgnbst}$ as in the statement follows once we prove that $H_{\ma S_{(G,P,s)}}=\Aut(G,P,s)$ for every   $(G,P,s)$. 
 To show this, let $y\in \ma S_{(G,P,s)}$ and recall   diagram \eqref{diag:sprime}. 
By subsection~\ref{rem:defospin}, the  set $E$ determines  a group basis for $\MON_{\Sgnbst,y}$ and a monoid basis for $\EFF_{\Sgnbst,y}$. The locally constant sheaf of sets on $\ma S_{(G,P,s)}$  whose stalk at every point is   
the set of nodes of the underlying stable curve
becomes trivial when pulled back to  $\hcS_{(G,P,s)}$. Hence the pull-backs of $\MON_{\Sgnbst}$ and $\EFF_{\Sgnbst}$ to  $\hcS_{(G,P,s)}$ are trivial. By Proposition~\ref{lem:commute} and Theorem~\ref{thm:irr},  the action of $\pi_1^{et}(\ma S_{(G,P,s)},y)$ on $\MON_{\Sgnbst,y}$ factors through its quotient $\Aut(G,P,s)$, and hence  $H_{\ma S_{(G,P,s)}}=\Aut(G,P,s)$.

We now prove  that $\Trop_{\Sgnbst}=\ol{\Phi}_{\Sgnbst}\circ{\bf p}_{\ol{\ma S}_{g,n}}$ and that the diagram   in the statement is commutative; its left square is so by
  \cite{ACP15}.

 Consider a point $y^{\an}$ in $ \Sgnban$ given by  $\psi\col \Spec R\to \Sgnbst$. Assume that the image, $y$, of the closed point of $\Spec R$ lies in the stratum $\ma S_{(G,P,s)}$. 
Let $\Trop_{\Sgnbst}(y^{\an})=[(\Gamma,P,s)]\in \ol{S}_{g,n}^{\trop}$, where $\Gamma=(G,\ell)$. The set $E$ can be seen as a  basis of the free monoid $\EFF_{\Sgnbst,y}$, and $\ell$ is given by the composition
$$
\xymatrix@=.4pc{
\ell:& E    \ar[rrr]   &&& \EFF_{\Sgnbst,y}\ar[rrrrr]^{{\bf p}_{\Sgnbst}(y^{\an})}  &&&&& \ol{\mathbb R}_{\geq0}   
}
$$

 Recalling the definition of  ${\bf p}_{\ol {\cS}_{g,n}}(y^{\an})$ given in \eqref{chain}, we obtain   $$\Trop_{\Sgnbst}(y^{\an})=\ol{\Phi}_{\Sgnbst}\circ {\bf p}_{\ol {\cS}_{g,n}}(y^{\an}).$$

Let $x^{\an}=\pi^{\an}(y^{\an})$, then 
 $x^{\an}$ is represented  by  $\pi\circ\psi\col \Spec R\ra \ol{\cM}_{g,n}$. With the notations of Subsection~\ref{rem:defospin},  $\Trop_{\ol{\cM}_{g,n}}(x^{\an})$ is the tropical curve  
 $(G,\ell)$
    such that
 
    \[
\ell(e_i)=\val _K(\psi^\#(\pi^\#(t_i)))=
\begin{cases}
\begin{array}{ll}
2    \val _K(\psi^\#(s_i) )  & \text{if } 1\leq i\leq r \\
    \val _K(\psi^\#(s_i) )   & r<i\leq \delta 
\end{array}
\end{cases}
\]
since we have $\pi^\#(t_i)=s_i^2$ for $i\le r$ and $\pi^\#(t_i)=s_i$ for $i>r$.
    
    On the other hand,  $\Trop_{\Sgnbst}(y^{\an})$  is the tropical spin curve in  $ \ol{S}_{(G,P,s)}^{\trop}$ 
    such that the edge $e_i$ of $G$ has length 
    $ 
    \val _K(\psi^\#(s_i) ).
    $ 
  By the definition of $\pi^{\trop}$, 
  the length of $e_i$ on the curve $\pi^{\trop}\circ\Trop_{\Sgnbst}(y^{\an})$
  is    $2 \val _K(\psi^\#(s_i) )$ if $i\leq r$ and $ \val _K(\psi^\#(s_i) )$ otherwise; so it is equal to $\ell(e_i)$.
  We thus proved $\pi^{\trop}\circ\Trop_{\Sgnbst}(y^{\an})=\Trop_{\ol{\cM}_{g,n}}\circ\pi^{\an}(y^{\an})$.
\end{proof}

\noindent
{\it{Acknowledgements.}}
We  thank Alex Abreu, Eduardo Esteves, Martin Ulirsch, and Filippo Viviani for several useful remarks. 
Part of the material in this paper is based upon work supported by the National Science Foundation under Grant No. DMS-1440140 while the first named author was visiting the Mathematical Sciences Research Institute in Berkeley, California.

\end{document}